\newtheorem{thm}[subsection]{Theorem}
\newtheorem{defn}[subsection]{Definition}
\newtheorem{cor}[subsection]{Corollary}
\newtheorem{lemma}[subsection]{Lemma}
\newtheorem{conj}[subsection]{Conjecture}
\newtheorem{remark}[subsection]{Remark}
\theoremstyle{definition}
\def\cP{{\cal P}}
\def\cA{{\cal A}}
\def\ra{\rightarrow}
\def\bra{\langle}
\def\ket{\rangle}
\def\cA{{\mathcal A}}
\def\cB{{\mathcal B}}
\def\cE{{\mathcal E}}
\def\cH{{\mathcal H}}
\def\cI{{\mathcal I}}
\def\cJ{{\mathcal J}}
\def\cL{{\mathcal L}}
\def\cM{{\mathcal M}}
\def\cP{{\mathcal P}}
\def\cR{{\mathcal R}}
\def\cS{{\mathcal S}}
\def\cV{{\mathcal V}}
\def\cW{{\mathcal W}}
\def\gg{{\mathfrak g}}
\def\gh{{\mathfrak h}}
\def\gl{{\mathfrak l}}
\def\gs{{\mathfrak s}}
\newfont{\german}{eufm10}
\begin{document}
\pagestyle{plain}

\title
{Invariant theory and the Heisenberg vertex algebra}

\author{Andrew R. Linshaw}
\address{Fachbereich Mathematik, Technische Universit\"at Darmstadt, 64289 Darmstadt, Germany.}
\email{linshaw@mathematik.tu-darmstadt.de}

{\abstract
\noindent The invariant subalgebra $\cH^+$ of the Heisenberg vertex algebra $\cH$ under its automorphism group $\mathbb{Z}/2\mathbb{Z}$ was shown by Dong-Nagatomo to be a $\cW$-algebra of type $\cW(2,4)$. Similarly, the rank $n$ Heisenberg vertex algebra $\cH(n)$ has the orthogonal group $O(n)$ as its automorphism group, and we conjecture that $\cH(n)^{O(n)}$ is a $\cW$-algebra of type $\cW(2,4,6,\dots,n^2+3n)$. We prove our conjecture for $n=2$ and $n=3$, and we show that this conjecture implies that $\cH(n)^G$ is strongly finitely generated for any reductive group $G\subset O(n)$. }

\keywords{invariant theory; vertex algebra; reductive group action; orbifold construction; strong finite generation; $\cW$-algebra}
\maketitle
\tableofcontents
\section{Introduction}

We call a vertex algebra $\cV$ {\it strongly finitely generated} if there exists a finite set of generators such that the collection of iterated Wick products of the generators and their derivatives spans $\cV$. This property has several important consequences, and in particular implies that both Zhu's associative algebra $A(\cV)$, and Zhu's commutative algebra $\cV / C_2(\cV)$, are finitely generated. By an {\it invariant vertex algebra}, we mean a subalgebra $\cV^G\subset \cV$, where $G$ is a group of automorphisms of $\cV$. It is our belief that if $\cV$ is a simple, strongly finitely generated vertex algebra, and $G$ is reductive, $\cV^G$ will be strongly finitely generated under fairly general circumstances. Isolated examples of this phenomenon have been known for some years (see for example \cite{BFH}\cite{FKRW}\cite{EFH}\cite{DNI}\cite{KWY}), although the first general results of this kind were obtained in \cite{LII}, in the case where $\cV$ is the $\beta\gamma$-system $\cS(V)$, $bc$-system $\cE(V)$, or $bc\beta\gamma$-system $\cE(V)\otimes \cS(V)$, associated to $V = \mathbb{C}^n$. The strong finite generation property is a subtle and essentially \lq\lq quantum" phenomenon, and is generally destroyed by passing to the classical limit before taking invariants. Often, $\cV$ admits a $G$-invariant filtration for which $gr(\cV)$ is a commutative algebra with a derivation (i.e., an abelian vertex algebra), and the classical limit $gr(\cV^G)$ is isomorphic to $(gr(\cV))^G$ as a commutative algebra. Unlike $\cV^G$, $gr(\cV^G)$ is generally not finitely generated as a vertex algebra, and a presentation will require both infinitely many generators and infinitely many relations.

One of the most basic examples of an invariant vertex algebra was studied by Dong-Nagatomo in \cite{DNI}. Let $\cH$ denote the Heisenberg vertex algebra, which is generated by a field $\alpha$ satisfying the operator product expansion (OPE) relation $\alpha(z) \alpha(w)\sim (z-w)^{-2}$. Clearly the automorphism group $Aut(\cH)$ is isomorphic to $\mathbb{Z}/2\mathbb{Z}$, and is generated by the involution $\theta$ sending $\alpha\mapsto -\alpha$. In \cite{DNI} it was shown that the invariant subalgebra $\cH^+$ under $\theta$ is a $\cW$-algebra of type $\cW(2,4)$, and in particular is strongly generated by the Virasoro element $L$ and an element $J$ of weight four. Using this result, the authors described the Zhu algebra of $\cH^+$, which is a commutative algebra on two generators, and they classified the irreducible modules of $\cH^+$. In \cite{DNII}, Dong-Nagatomo considered a higher-rank analogue $\cH(n)^+$ of $\cH^+$. In this notation, $\cH(n)$ is the rank $n$ Heisenberg algebra, which is just the tensor product of $n$ copies of $\cH$, and $\cH(n)^+$ is the invariant subalgebra under the $-1$ involution. Unlike the rank $1$ case, the Zhu algebra of $\cH(n)^+$ is nonabelian, and it is difficult to describe it completely by generators and relations. However, the authors obtained enough information about it to classify the irreducible modules of $\cH(n)^+$. This result is important for understanding the structure and representation theory of vertex algebras of the form $V_L^+$. Here $V_L$ is the lattice vertex algebra attached to some lattice $L$ of rank $n$, and $V_L^+$ is the invariant subalgebra under the $-1$ involution.

In this paper, we study general invariant vertex algebras $\cH(n)^G$, where $G$ is an arbitrary reductive group of automorphisms of $\cH(n)$. In the case where the action of $G$ extends to $V_L$, an understanding of $\cH(n)^G$ is a necessary first step in studying $V_L^G$. The {\it full} automorphism group of $\cH(n)$ preserving a natural conformal structure is the orthogonal group $O(n)$. We begin by studying $\cH(n)^{O(n)}$, which coincides with $\cH^+$ in the case $n=1$. For an arbitrary reductive group $G\subset O(n)$, $\cH(n)^G$ is completely reducible as a module over $\cH(n)^{O(n)}$, and this module structure is an essential ingredient in our description. Our approach in this paper is quite parallel to our earlier study of invariant subalgebras of the $\beta\gamma$-system $\cS(V)$. The automorphism group of $\cS(V)$ preserving its conformal structure is $GL_n$, and $\cS(V)^{GL_n}$ is isomorphic to $\cW_{1+\infty,-n}$ by a theorem of Kac-Radul \cite{KR}. In \cite{LI}, we studied $\cW_{1+\infty,-n}$ via classical invariant theory, and we use a similar method in Section \ref{secortho} of this paper to study $\cH(n)^{O(n)}$. There are many parallels between these two vertex algebras; for example, they both have abelian Zhu algebras, which implies that their irreducible, admissible modules are all highest-weight modules. This observation is crucial in our description in Section \ref{secgeneral} of $\cH(n)^G$ as a module over $\cH(n)^{O(n)}$, which uses essentially the same ideas as \cite{LII}. 

First of all, there is an $O(n)$-invariant filtration on $\cH(n)$ such that the associated graded object $gr(\cH(n))$ is isomorphic to $S=Sym \bigoplus_{j\geq 0} V_j$ as a commutative ring, where $V_j\cong \mathbb{C}^n$ as $O(n)$-modules. In fact, $\cH(n)\cong S$ as vector spaces, and we view $\cH(n)$, equipped with the Wick product, as a deformation of $S$. Using Weyl's first and second fundamental theorems of invariant theory for the standard representation of $O(n)$, we obtain a natural (infinite) strong generating set $\{\omega_{a,b}|~0\leq a\leq b\}$ for $\cH(n)^{O(n)}$, as well as an infinite set of relations among these generators. A linear change of variables produces a slightly more economical set of strong generators $\{j^{2m}|~m\geq 0\}$, where $j^{2m}$ has weight $2m+2$. In fact, $\cH(n)^{O(n)}$ is generated as a vertex algebra by $\{j^0, j^2\}$ for all $n\geq 1$, although this is only a strong generating set in the case $n=1$. The relation of minimal weight among $\{j^{2m}|~m\geq 0\}$ and their derivatives occurs at weight $n^2+3n+2$, and we conjecture that it gives rise to a decoupling relation $$j^{n^2+3n} = P(j^0,j^2,\dots,j^{n^2+3n-2}).$$ Here $P$ is a normally ordered polynomial in $j^0, j^2,\dots,j^{n^2+3n-2}$, and their derivatives. An easy consequence of our conjecture is that higher decoupling relations of the form $j^{2r} = Q_{2r}(j^0,j^2,\dots,j^{n^2+3n-2})$ exist for all $r\geq\frac{1}{2} (n^2+3n)$. Hence $\cH(n)^{O(n)}$ has a minimal strong generating set $\{j^0, j^2, \dots, j^{n^2+3n-2}\}$, and in particular is a $\cW$-algebra of type $\cW(2,4,6,\dots, n^2+3n)$. By computer calculation, we prove our conjecture for $n=2$ and $n=3$, but we are unable to prove it in general.

 By a fundamental result of Dong-Li-Mason \cite{DLM}, $\cH(n)$ has a decomposition of the form $$ \cH(n) \cong \bigoplus_{\nu\in H} L(\nu)\otimes M^{\nu},$$ where $H$ indexes the irreducible, finite-dimensional representations $L(\nu)$ of $O(n)$, and the $M^{\nu}$'s are inequivalent, irreducible, admissible $\cH(n)^{O(n)}$-modules. Since the Zhu algebra of $\cH(n)^{O(n)}$ is abelian, each $M^{\nu}$ above is a highest-weight module. For any reductive group $G\subset O(n)$, $\cH(n)^G$ is also a direct sum of irreducible, highest-weight $\cH(n)^{O(n)}$-modules. Using a classical theorem of Weyl, we show that there is a finite set of irreducible $\cH(n)^{O(n)}$-submodules of $\cH(n)^G$ whose direct sum contains an (infinite) strong generating for $\cH(n)^G$. Since $\cH(n)^{O(n)}$ is finitely generated, this shows that $\cH(n)^G$ is finitely generated as a vertex algebra. Finally, assuming our conjecture that $\cH(n)^{O(n)}$ is strongly finitely generated, we show that $\cH(n)^G$ is strongly finitely generated as well. Since our conjecture holds for $n=2$ and $n=3$, the strong finite generation of $\cH(2)^G$ and $\cH(3)^G$ for an arbitrary reductive $G$ is an immediate consequence.

There is an application of these results to invariant subalgebras of affine vertex algebras which we develop in a separate paper \cite{LIII}. Let $\gg$ be a simple, finite-dimensional Lie algebra, and let $V_k(\gg)$ denote the universal affine vertex algebra at level $k$ associated to $\gg$. It is freely generated by vertex operators $X^{\xi}$, which are linear in $\xi\in\gg$ and satisfy the OPE relations $$X^{\xi}(z) X^{\eta}(w) \sim k\bra \xi,\eta\ket (z-w)^{-2} + X^{[\xi,\eta]}(w)(z-w)^{-1},$$ where $\bra ,\ket$ denotes the normalized Killing form $\frac{1}{2 h^{\vee}} \bra ,\ket_K$. Let $G$ be a reductive group of automorphisms of $V_k(\gg)$ for all $k\in \mathbb{C}$. In particular, $G$ acts on the weight-one subspace $V_k(\gg)[1]\cong \gg$, and $G$ preserves both the bracket and the bilinear form on $\gg$. Therefore $G$ lies in the orthogonal group $O(n)$ for $n= \text{dim}(\gg)$, so $G$ also acts on the Heisenberg algebra $\cH(n)$. As vector spaces, we have $V_k(\gg)^G \cong (Sym \oplus_{j\geq 0} V_j)^G\cong \cH(n)^G$, where $V_j\cong \mathbb{C}^n$ for all $j\geq 0$, and we regard $\cH(n)^G$ as a \lq\lq partial abelianization" of $V_k(\gg)^G$. Invariant subalgebras of $V_k(\gg)$ are much more complicated and difficult to study than invariant subalgebras of $\cH(n)$, but for generic values of $k$, a strong generating set for $\cH(n)^G$ gives rise to a strong generating set for $V_k(\gg)^G$. Therefore the conjectured strong finite generation of $\cH(n)^{O(n)}$ has a far-reaching consequence; it implies that $V_k(\gg)^G$ is strongly finitely generated for generic values of $k$. Finally, since our conjecture holds for $n=3$, this statement is true in the case $\gg = \gs\gl_2$.

\section{Vertex algebras}
In this section, we define vertex algebras, which have been discussed from various different points of view in the literature \cite{B}\cite{FBZ}\cite{FHL}\cite{FLM}\cite{K}\cite{LiI}\cite{LZ}. We will follow the formalism developed in \cite{LZ} and partly in \cite{LiI}. Let $V=V_0\oplus V_1$ be a super vector space over $\mathbb{C}$, and let $z,w$ be formal variables. By $QO(V)$, we mean the space of all linear maps $$V\ra V((z)):=\{\sum_{n\in\mathbb{Z}} v(n) z^{-n-1}|
v(n)\in V,\ v(n)=0\ \text{for} \ n>>0 \}.$$ Each element $a\in QO(V)$ can be
uniquely represented as a power series
$$a=a(z):=\sum_{n\in\mathbb{Z}}a(n)z^{-n-1}\in End(V)[[z,z^{-1}]].$$ We
refer to $a(n)$ as the $n$th Fourier mode of $a(z)$. Each $a\in
QO(V)$ is of the shape $a=a_0+a_1$ where $a_i:V_j\ra V_{i+j}((z))$ for $i,j\in\mathbb{Z}/2\mathbb{Z}$, and we write $|a_i| = i$.

On $QO(V)$ there is a set of nonassociative bilinear operations
$\circ_n$, indexed by $n\in\mathbb{Z}$, which we call the $n$th circle
products. For homogeneous $a,b\in QO(V)$, they are defined by
$$
a(w)\circ_n b(w)=Res_z a(z)b(w)~\iota_{|z|>|w|}(z-w)^n-
(-1)^{|a||b|}Res_z b(w)a(z)~\iota_{|w|>|z|}(z-w)^n.
$$
Here $\iota_{|z|>|w|}f(z,w)\in\mathbb{C}[[z,z^{-1},w,w^{-1}]]$ denotes the
power series expansion of a rational function $f$ in the region
$|z|>|w|$. We usually omit the symbol $\iota_{|z|>|w|}$ and just
write $(z-w)^{-1}$ to mean the expansion in the region $|z|>|w|$,
and write $-(w-z)^{-1}$ to mean the expansion in $|w|>|z|$. It is
easy to check that $a(w)\circ_n b(w)$ above is a well-defined
element of $QO(V)$.

The nonnegative circle products are connected through the {\it
operator product expansion} (OPE) formula.
For $a,b\in QO(V)$, we have \begin{equation}\label{opeform} a(z)b(w)=\sum_{n\geq 0}a(w)\circ_n
b(w)~(z-w)^{-n-1}+:a(z)b(w):\ ,\end{equation} which is often written as
$a(z)b(w)\sim\sum_{n\geq 0}a(w)\circ_n b(w)~(z-w)^{-n-1}$, where
$\sim$ means equal modulo the term $$
:a(z)b(w):\ =a(z)_-b(w)\ +\ (-1)^{|a||b|} b(w)a(z)_+.$$ Here
$a(z)_-=\sum_{n<0}a(n)z^{-n-1}$ and $a(z)_+=\sum_{n\geq
0}a(n)z^{-n-1}$. Note that $:a(w)b(w):$ is a well-defined element of
$QO(V)$. It is called the {\it Wick product} of $a$ and $b$, and it
coincides with $a\circ_{-1}b$. The other negative circle products
are related to this by
$$ n!~a(z)\circ_{-n-1}b(z)=\ :(\partial^n a(z))b(z):\ ,$$
where $\partial$ denotes the formal differentiation operator
$\frac{d}{dz}$. For $a_1(z),\dots ,a_k(z)\in QO(V)$, the $k$-fold
iterated Wick product is defined to be
\begin{equation}\label{iteratedwick} :a_1(z)a_2(z)\cdots a_k(z):\ =\ :a_1(z)b(z):~,\end{equation}
where $b(z)=\ :a_2(z)\cdots a_k(z):\ $. We often omit the formal variable $z$ when no confusion can arise.

The set $QO(V)$ is a nonassociative algebra with the operations
$\circ_n$, which satisfy $1\circ_n a=\delta_{n,-1}a$ for
all $n$, and $a\circ_n 1=\delta_{n,-1}a$ for $n\geq -1$. In particular, $1$ behaves as a unit with respect to $\circ_{-1}$. A linear subspace $\cA\subset QO(V)$ containing $1$ which is closed under the circle products will be called a {\it quantum operator algebra} (QOA). Note that $\cA$ is closed under $\partial$
since $\partial a=a\circ_{-2}1$. Many formal algebraic
notions are immediately clear: a homomorphism is just a linear
map that sends $1$ to $1$ and preserves all circle products; a module over $\cA$ is a
vector space $M$ equipped with a homomorphism $\cA\rightarrow
QO(M)$, etc. A subset $S=\{a_i|\ i\in I\}$ of $\cA$ is said to generate $\cA$ if every element $a\in\cA$ can be written as a linear combination of nonassociative words in the letters $a_i$, $\circ_n$, for
$i\in I$ and $n\in\mathbb{Z}$. We say that $S$ {\it strongly generates} $\cA$ if every $a\in\cA$ can be written as a linear combination of words in the letters $a_i$, $\circ_n$ for $n<0$. Equivalently, $\cA$ is spanned by the collection $\{ :\partial^{k_1} a_{i_1}(z)\cdots \partial^{k_m} a_{i_m}(z):| ~i_1,\dots,i_m \in I,~ k_1,\dots,k_m \geq 0\}$.

We say that $a,b\in QO(V)$ {\it quantum commute} if $(z-w)^N
[a(z),b(w)]=0$ for some $N\geq 0$. Here $[,]$ denotes the super bracket. This condition implies that $a\circ_n b = 0$ for $n\geq N$, so (\ref{opeform}) becomes a finite sum. A {\it commutative quantum operator algebra} (CQOA) is a QOA whose elements pairwise quantum commute. Finally, the notion of a CQOA is equivalent to the notion of a vertex algebra. Every CQOA $\cA$ is itself a faithful $\cA$-module, called the {\it left regular
module}. Define
$$\rho:\cA\rightarrow QO(\cA),\ \ \ \ a\mapsto\hat a,\ \ \ \ \hat
a(\zeta)b=\sum_{n\in\mathbb{Z}} (a\circ_n b)~\zeta^{-n-1}.$$ Then $\rho$ is an injective QOA homomorphism,
and the quadruple of structures $(\cA,\rho,1,\partial)$ is a vertex
algebra in the sense of \cite{FLM}. Conversely, if $(V,Y,{\bf 1},D)$ is
a vertex algebra, the collection $Y(V)\subset QO(V)$ is a
CQOA. {\it We will refer to a CQOA simply as a
vertex algebra throughout the rest of this paper}.

The following are useful identities that measure the nonassociativity and noncommutativity of the Wick product, and the failure of the positive circle products to be derivations of the
Wick product. Let $a,b,c$ be vertex operators in some vertex algebra $\cA$, and let $n > 0$. Then
\begin{equation}\label{vaidi} :(:ab:)c:-:abc:=\sum_{k\geq0}{1\over(k+1)!}\left(:(\partial^{k+1}a)(b\circ_k
c): +(-1)^{|a||b|}:(\partial^{k+1}b)(a\circ_k c):\right),\end{equation}
\begin{equation}\label{vaidii} :ab:-(-1)^{|a||b|}:ba:=\sum_{k\geq0}{(-1)^k\over(k+1)!}\partial^{k+1}(a\circ_kb),\end{equation}
\begin{equation}\label{vaidiii} a\circ_n(:bc:)-:(a\circ_nb)c:-(-1)^{|a||b|}:b(a\circ_nc):= \sum_{k=1}^n\left(\begin{matrix} n\cr k\end{matrix} \right)(a\circ_{n-k}b)\circ_{k-1}c, \end{equation}
\begin{equation}\label{vaidiv} (:ab:)\circ_n
c=\sum_{k\geq0}{1\over k!}:(\partial^ka)(b\circ_{n+k}c):
+(-1)^{|a||b|}\sum_{k\geq0}b\circ_{n-k-1}(a\circ_k c) .\end{equation}

\section{Category $\cR$}
In \cite{LL} we considered a certain category $\cR$ of vertex algebras, together with a functor from $\cR$ to the category of supercommutative rings.

\begin{defn}Let $\cR$ be the category of vertex algebras $\cA$ equipped with a $\mathbb{Z}_{\geq 0}$-filtration
\begin{equation}\label{goodi} \cA_{(0)}\subset\cA_{(1)}\subset\cA_{(2)}\subset \cdots,\ \ \ \cA = \bigcup_{k\geq 0}
\cA_{(k)}\end{equation} such that $\cA_{(0)} = \mathbb{C}$, and for all
$a\in \cA_{(k)}$, $b\in\cA_{(l)}$, we have
\begin{equation}\label{goodii} a\circ_n b\in\cA_{(k+l)},\ \ \ \text{for}\
n<0,\end{equation}
\begin{equation}\label{goodiii} a\circ_n b\in\cA_{(k+l-1)},\ \ \ \text{for}\
n\geq 0.\end{equation}
Elements $a(z)\in\cA_{(d)}\setminus \cA_{(d-1)}$ are said to have
degree $d$, and morphisms in $\cR$ are vertex
algebra homomorphisms that preserve the filtration.\end{defn}

Filtrations on vertex algebras satisfying (\ref{goodii})-(\ref{goodiii}) were introduced in \cite{LiII} and are known as {\it good increasing filtrations}. Setting $\cA_{(-1)} = \{0\}$, the associated graded object $gr(\cA) = \bigoplus_{k\geq 0}\cA_{(k)}/\cA_{(k-1)}$ is a
$\mathbb{Z}_{\geq 0}$-graded associative, supercommutative algebra with a
unit $1$ under a product induced by the Wick product on $\cA$. In general, there is no natural linear map $\cA\ra gr (\cA)$, but for each $r\geq 1$ we have the projection \begin{equation}\label{proj} \phi_r: \cA_{(r)} \ra \cA_{(r)}/\cA_{(r-1)}\subset gr(\cA).\end{equation} 
Moreover, $gr(\cA)$ has a derivation $\partial$ of degree zero
(induced by the operator $\partial = \frac{d}{dz}$ on $\cA$), and
for each $a\in\cA_{(d)}$ and $n\geq 0$, the operator $a\circ_n$ on $\cA$
induces a derivation of degree $d-k$ on $gr(\cA)$, which we also denote by $a\circ_n$. Here $$k  = sup \{ j\geq 1|~ \cA_{(r)}\circ_n \cA_{(s)}\subset \cA_{(r+s-j)}~\forall r,s,n\geq 0\},$$ as in \cite{LL}. Finally, these derivations give $gr(\cA)$ the structure of a vertex Poisson algebra.

The assignment $\cA\mapsto gr(\cA)$ is a functor from $\cR$ to the category of $\mathbb{Z}_{\geq 0}$-graded supercommutative rings with a differential $\partial$ of degree 0, which we will call $\partial$-rings. A $\partial$-ring is the same thing as an {\it abelian} vertex algebra, that is, a vertex algebra $\cV$ in which $[a(z),b(w)] = 0$ for all $a,b\in\cV$. A $\partial$-ring $A$ is said to be generated by a subset $\{a_i|~i\in I\}$ if $\{\partial^k a_i|~i\in I, k\geq 0\}$ generates $A$ as a graded ring. The key feature of $\cR$ is the following reconstruction property \cite{LL}:

\begin{lemma} \label{recon} Let $\cA$ be a vertex algebra in $\cR$ and let $\{a_i|~i\in I\}$ be a set of generators for $gr(\cA)$ as a $\partial$-ring, where $a_i$ is homogeneous of degree $d_i$. If $a_i(z)\in\cA_{(d_i)}$ are vertex operators such that $\phi_{d_i}(a_i(z)) = a_i$, then $\cA$ is strongly generated as a vertex algebra by $\{a_i(z)|~i\in I\}$.\end{lemma}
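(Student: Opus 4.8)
The plan is to show that the linear span $\cA'$ of all iterated Wick products $:\partial^{k_1}a_{i_1}(z)\cdots\partial^{k_m}a_{i_m}(z):$ (with $i_1,\dots,i_m\in I$ and $k_1,\dots,k_m\geq 0$) is all of $\cA$; by definition this is exactly the assertion that $\{a_i(z)\}$ strongly generates $\cA$. First I would verify that $\cA'$ respects the filtration in the expected way: since $\partial a = a\circ_{-2}1$ and $1\in\cA_{(0)}$, property (\ref{goodii}) gives $\partial^k a_i(z)\in\cA_{(d_i)}$, and iterating the Wick product $b\circ_{-1}c$ together with (\ref{goodii}) shows that the monomial $:\partial^{k_1}a_{i_1}(z)\cdots\partial^{k_m}a_{i_m}(z):$ lies in $\cA_{(d_{i_1}+\cdots+d_{i_m})}$. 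The proof then proceeds by induction on the filtration degree $d$, establishing $\cA_{(d)}\subset\cA'$. The base case $\cA_{(0)}=\mathbb{C}=\mathbb{C}\cdot 1\subset\cA'$ is immediate.

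For the inductive step, assume $\cA_{(d-1)}\subset\cA'$ and take $a\in\cA_{(d)}$. Its image $\phi_d(a)$ lies in the degree-$d$ homogeneous component of $gr(\cA)$, which, because $\{a_i\}$ generates $gr(\cA)$ as a $\partial$-ring, can be written as a finite sum $\sum_\alpha c_\alpha\,(\partial^{k^{\alpha}_{1}}a_{i^{\alpha}_{1}})\cdots(\partial^{k^{\alpha}_{m_{\alpha}}}a_{i^{\alpha}_{m_{\alpha}}})$ of monomials each of total degree exactly $d$. Lifting each monomial to the corresponding iterated Wick product, I would form $\nu=\sum_\alpha c_\alpha :\partial^{k^{\alpha}_{1}}a_{i^{\alpha}_{1}}(z)\cdots\partial^{k^{\alpha}_{m_{\alpha}}}a_{i^{\alpha}_{m_{\alpha}}}(z):\ \in\cA'$, which by the previous paragraph lies in $\cA_{(d)}$.

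The crux of the argument is the compatibility of the projection $\phi_d$ with the $\partial$-ring structure on $gr(\cA)$: the multiplication on $gr(\cA)$ is induced by the Wick product and the derivation is induced by $\partial$, so $\phi_d$ carries an iterated Wick product of operators whose degrees sum to $d$ to the product of their images, and it intertwines $\partial$ with the induced derivation. Granting this, $\phi_d(\nu)=\phi_d(a)$, whence $a-\nu\in\ker\phi_d=\cA_{(d-1)}$. The inductive hypothesis gives $a-\nu\in\cA'$, and since $\nu\in\cA'$ we conclude $a\in\cA'$. Thus $\cA_{(d)}\subset\cA'$ for every $d$, and $\cA=\bigcup_d\cA_{(d)}=\cA'$. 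The main obstacle is bookkeeping rather than conceptual: one must check carefully that, although the Wick product on $\cA$ is neither commutative nor associative, its image under $\phi_d$ at the top filtration degree is independent of the chosen bracketing and ordering and agrees with the commutative, associative product in $gr(\cA)$. This is precisely the content of $gr$ being a well-defined functor to $\partial$-rings, established in the discussion of $\cR$, and once it is invoked the induction closes at once.
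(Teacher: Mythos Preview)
Your proof is correct and is precisely the standard argument for this reconstruction lemma: induction on the filtration degree, lifting a monomial expression for $\phi_d(a)$ to an iterated Wick product $\nu\in\cA_{(d)}$, and using $a-\nu\in\cA_{(d-1)}$ together with the inductive hypothesis. The paper itself does not prove Lemma~\ref{recon}; it is quoted from \cite{LL}, and your argument is exactly the one given there.
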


As shown in \cite{LI}, there is a similar reconstruction property for kernels of surjective morphisms in $\cR$. Let $f:\cA\ra \cB$ be a morphism in $\cR$ with kernel $\cJ$, such that $f$ maps $\cA_{(k)}$ onto $\cB_{(k)}$ for all $k\geq 0$. The kernel $J$ of the induced map $gr(f): gr(\cA)\ra gr(\cB)$ is a homogeneous $\partial$-ideal (i.e., $\partial J \subset J$). A set $\{a_i|~i\in I\}$ such that $a_i$ is homogeneous of degree $d_i$ is said to generate $J$ as a $\partial$-ideal if $\{\partial^k a_i|~i\in I,~k\geq 0\}$ generates $J$ as an ideal.

\begin{lemma} \label{idealrecon} Let $\{a_i| i\in I\}$ be a generating set for $J$ as a $\partial$-ideal, where $a_i$ is homogeneous of degree $d_i$. Then there exist vertex operators $a_i(z)\in \cA_{(d_i)}$ with $\phi_{d_i}(a_i(z)) = a_i$, such that $\{a_i(z)|~i\in I\}$ generates $\cJ$ as a vertex algebra ideal.\end{lemma}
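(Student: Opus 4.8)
The plan is to follow the proof of the reconstruction Lemma \ref{recon}, adapting it in two places: the lifts $a_i(z)$ must be chosen to lie inside the vertex algebra ideal $\cJ$ itself, and the degree induction must be carried out within $\cJ$ rather than within all of $\cA$.

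First I would produce the lifts. Choose any $\tilde a_i(z)\in\cA_{(d_i)}$ with $\phi_{d_i}(\tilde a_i(z)) = a_i$. Since $a_i\in J = \ker(gr(f))$, we have $gr(f)(a_i) = 0$, which says exactly that the degree-$d_i$ symbol of $f(\tilde a_i(z))$ vanishes, i.e. $f(\tilde a_i(z))\in\cB_{(d_i-1)}$. Invoking the hypothesis that $f$ carries $\cA_{(d_i-1)}$ onto $\cB_{(d_i-1)}$, pick $c\in\cA_{(d_i-1)}$ with $f(c) = f(\tilde a_i(z))$ and set $a_i(z) := \tilde a_i(z) - c$. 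Then $f(a_i(z)) = 0$, so $a_i(z)\in\cJ$, while $\phi_{d_i}(a_i(z)) = a_i$ because the correction $c$ has strictly lower degree. This is the one genuinely new ingredient relative to Lemma \ref{recon}, and it is exactly here that filtration-surjectivity of $f$ is used.

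Next let $\cJ'\subseteq\cJ$ be the vertex algebra ideal generated by $\{a_i(z)\}$, and recall that $\cJ'$ is automatically closed under $\partial$ and under left Wick multiplication $a\circ_{-1}(-)$ by arbitrary $a\in\cA$. I would prove $\cJ = \cJ'$ by induction on filtration degree. The base case is immediate since $\cJ\cap\cA_{(0)} = \cJ\cap\mathbb{C} = 0$, as $f$ restricts to the identity on $\cA_{(0)} = \mathbb{C}$. For the inductive step, take $x\in\cJ\cap\cA_{(r)}$; then $f(x) = 0$ forces $gr(f)(\phi_r(x)) = 0$, so $\phi_r(x)\in J$. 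Because $\{a_i\}$ generates the homogeneous $\partial$-ideal $J$, I can write $\phi_r(x) = \sum_j g_j\,\partial^{k_j} a_{i_j}$ with each $g_j\in gr(\cA)$ homogeneous of degree $r - d_{i_j}$. Lifting $g_j$ to $G_j(z)\in\cA_{(r-d_{i_j})}$ with $\phi_{r-d_{i_j}}(G_j(z)) = g_j$ and forming $y := \sum_j :G_j(z)\,\partial^{k_j} a_{i_j}(z):$, the filtration axiom (\ref{goodii}) places each summand in $\cA_{(r)}$, and since $\partial^{k_j} a_{i_j}(z)\in\cJ'$ each summand lies in $\cJ'$; hence $y\in\cJ'$. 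Because the leading symbol of a Wick product is the product of the leading symbols, $\phi_r(y) = \phi_r(x)$, so $x - y\in\cJ\cap\cA_{(r-1)}$, which lies in $\cJ'$ by induction. Therefore $x\in\cJ'$.

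The routine parts above are the symbol computations and the verification that the correcting element $y$ stays inside $\cJ'$, which is precisely why it matters that $\cJ'$ is closed under $\partial$ and under Wick multiplication. The main obstacle is the lifting step: one must simultaneously control the leading symbol and membership in $\cJ$, and this is impossible without the surjectivity hypothesis on $f$. Once that is in place, the argument is structurally identical to the reconstruction lemma, with \lq\lq element of $\cA$" replaced throughout by \lq\lq element of the ideal $\cJ'$".
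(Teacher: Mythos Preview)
Your argument is correct. The paper itself does not supply a proof of Lemma~\ref{idealrecon}; it merely records the statement and attributes the proof to \cite{LI}. Your approach---first correcting an arbitrary lift of $a_i$ by an element of $\cA_{(d_i-1)}$ so that it lands in $\cJ$ (using that $f$ maps $\cA_{(d_i-1)}$ onto $\cB_{(d_i-1)}$), and then running the same filtration induction as in Lemma~\ref{recon} inside the ideal $\cJ'$---is exactly the standard argument and is what the cited reference does. There is nothing to add.
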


\section{The structure of $\cH(n)^{O(n)}$} \label{secortho}

The ring of Laurent polynomials $\mathbb{C}[t,t^{-1}]$ may be regarded as an abelian Lie algebra. It has a central extension $\gh = \mathbb{C}[t,t^{-1}]\oplus \mathbb{C}\kappa$ with bracket $[t^n,t^m] = n \delta_{n+m,0} \kappa$, and $\mathbb{Z}$-gradation $deg(t^n) = n$, $deg(\kappa) = 0$. Let $\gh_{\geq 0} = \oplus_{n\geq 0} \gh_n$, and let $C$ be the one-dimensional $\gh_{\geq 0}$-module on which $t^n$ acts trivially for $n\geq 0$, and $\kappa$ acts by the identity. Define $V = U(\gh)\otimes_{U(\gh_{\geq 0})} C$, and let $\alpha(n)\in End(V)$ be the linear operator representing $t^n$ on $V$. Define $\alpha(z) = \sum_{n\in\mathbb{Z}} \alpha(n) z^{-n-1}$, which is easily seen to lie in $QO(V)$ and satisfy the OPE relation $$\alpha(z)\alpha(w)\sim (z-w)^{-2}.$$ The vertex algebra $\cH$ generated by $\alpha$ is known as the {\it Heisenberg vertex algebra}. The rank $n$ Heisenberg algebra $\cH(n)$ is just the tensor product of $n$ copies of $\cH$, with generators $\alpha^1,\dots,\alpha^n$. There is a natural conformal structure of central charge $n$ on $\cH(n)$, with Virasoro element \begin{equation}\label{virasoro} L(z) = \frac{1}{2} \sum_{i=1}^n :\alpha^i(z) \alpha^i(z):,\end{equation}  under which each $\alpha^i$ is primary of weight one. The full automorphism group of $\cH(n)$ preserving $L(z)$ is easily seen to be the orthogonal group $O(n)$, which acts linearly on the vector space $U$ spanned by $\{\alpha^1,\dots,\alpha^n\}$. First, any conformal automorphism $\phi$ of $\cH(n)$ must lie in $GL(U)$ by weight considerations. Moreover, since $\alpha^i \circ_1 \alpha^j = \phi(\alpha^i)\circ_1 \phi(\alpha^j) = \delta_{i,j}$, $\phi$ must preserve the pairing $\bra,\ket$ on $U$ defined by $\bra \alpha^i,\alpha^j\ket = \delta_{i,j}$.

We define a good increasing filtration on $\cH(n)$ as follows: $\cH(n)_{(r)}$ is spanned by the set \begin{equation}\label{goodsv} \{:\partial^{k_1} \alpha^{i_1}  \cdots \partial^{k_s}\alpha^{i_s} :| ~ k_j\geq 0,~ s \leq r\}.\end{equation} Then $\cH(n)\cong gr(\cH(n))$ as linear spaces, and as commutative algebras we have
\begin{equation}\label{structureofgrs} gr(\cH(n))\cong Sym \bigoplus_{k\geq 0} V_k.\end{equation} In this notation, $V_k$ is the linear span of  $\{\alpha^{i}_k |~ i=1,\dots,n\}$, where $\alpha^i_k$ is the image of $\partial^k \alpha^i(z)$ in $gr(\cH(n))$ under the projection $\phi_1: \cH(n)_{(1)}\ra \cH(n)_{(1)}/\cH(n)_{(0)}\subset gr(\cH(n))$. The action of $O(n)$ on $\cH(n)$ preserves this filtration, and induces an action of $O(n)$ on $gr(\cH(n))$ by algebra automorphisms. For all $k\geq 0$ we have isomorphisms of $O(n)$-modules $V_k\cong \mathbb{C}^n$. Finally, for any reductive subgroup $G\subset O(n)$, $\cH(n)^G\cong gr(\cH(n)^G)$ as linear spaces, and \begin{equation}\label{structureofgrsinv} gr(\cH(n)^G )\cong (gr(\cH(n))^G \cong (Sym \bigoplus_{k\geq 0} V_k)^G \end{equation}  as commutative algebras. In the case $G=O(n)$, the following classical theorem of Weyl \cite{W} describes the generators and relations of the ring $(Sym \bigoplus_{k\geq 0} V_k)^{O(n)}$:

\begin{thm}\label{weylfft} (Weyl) For $k\geq 0$, let $V_k$ be the copy of the standard $O(n)$-module $\mathbb{C}^n$ with orthonormal basis $\{x_{i,k}| ~i=1,\dots,n\}$. The invariant ring $(Sym \bigoplus_{k\geq 0} V_k )^{O(n)}$ is generated by the quadratics \begin{equation}\label{weylgenerators} q_{a,b} = \sum_{i=1}^n x_{i,a} x_{i,b},\ \ \ \ \ \  0\leq a\leq b.\end{equation} For $a>b$, define $q_{a,b} = q_{b,a}$, and let $\{Q_{a,b}|\ a,b\geq 0\}$ be commuting indeterminates satisfying $Q_{a,b} = Q_{b,a}$ and no other algebraic relations. The kernel $I_n$ of the homomorphism $\mathbb{C}[Q_{a,b}]\ra (Sym \bigoplus_{k\geq 0} V_k)^{O(n)}$ sending $Q_{a,b}\mapsto q_{a,b}$ is generated by the $(n+1)\times (n+1)$ determinants \begin{equation}\label{weylrel} d_{I,J} = \det \left[\begin{matrix} Q_{i_0,j_0} & \cdots & Q_{i_0,j_n} \cr  \vdots  & & \vdots  \cr  Q_{i_n,j_0}  & \cdots & Q_{i_n,j_n} \end{matrix} \right].\end{equation} In this notation, $I=(i_0,\dots, i_{n})$ and $J = (j_0,\dots, j_{n})$ are lists of integers satisfying \begin{equation}\label{ijineq} 0\leq i_0<\cdots <i_n,\ \ \ \ \ \  0\leq j_0<\cdots <j_n.\end{equation} Since $Q_{a,b} = Q_{b,a}$, it is clear that $d_{I,J} = d_{J,I}$. \end{thm}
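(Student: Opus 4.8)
The statement is Weyl's first and second fundamental theorems for the standard representation of $O(n)$, phrased for countably many copies of $\mathbb{C}^n$. Since any element of $Sym\bigoplus_{k\geq 0} V_k$ involves only finitely many of the coordinates $x_{i,k}$, the plan is first to reduce to finitely many copies $V_0,\dots,V_N$ and to prove both assertions uniformly in $N$. I would organize the data as an $n\times(N+1)$ matrix $X = (x_{i,a})$ on which $O(n)$ acts by left multiplication; then $q_{a,b}$ is precisely the $(a,b)$ entry of the Gram matrix $X^{T}X$, and the assignment $Q_{a,b}\mapsto q_{a,b}$ is the comorphism of the Gram map $\gamma\colon X\mapsto X^{T}X$ from $n\times(N+1)$ matrices to symmetric $(N+1)\times(N+1)$ matrices. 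Everything is then a statement about the geometry and coordinate ring of the image of $\gamma$.

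For the generation statement (the FFT), I would use that $O(n)$ is reductive, so the Reynolds operator together with Aronhold polarization reduces the problem to multilinear invariants, i.e. to $O(n)$-invariant elements of $V_{k_1}\otimes\cdots\otimes V_{k_d}$. Using the invariant form to identify $\mathbb{C}^n$ with its dual, these are exactly the $O(n)$-invariant tensors in $(\mathbb{C}^n)^{\otimes d}$. The heart of the FFT is the classical fact that this invariant space is spanned by the complete contractions, one for each perfect matching of $\{1,\dots,d\}$ (forcing $d$ even). Each such contraction is, after restitution, a product of inner products $\bra x_{\bullet,k_a}, x_{\bullet,k_b}\ket = q_{k_a,k_b}$, so the $q_{a,b}$ generate the whole invariant ring.

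For the relations (the SFT), I would first observe that since $q = X^{T}X$ factors through $\mathbb{C}^n$, the Gram matrix always has rank at most $n$; hence every $(n+1)\times(n+1)$ minor $d_{I,J}$ vanishes on the image of $\gamma$, so $d_{I,J}\in I_n$. Conversely, the image of $\gamma$ is \emph{exactly} the variety $\mathcal{D}$ of symmetric matrices of rank $\leq n$, because over $\mathbb{C}$ any symmetric form of rank $r\leq n$ diagonalizes and is therefore $X^{T}X$ for a suitable $X$ with $n$ rows. Combining this with the FFT and reductivity of $O(n)$, the ring $\mathbb{C}[Q_{a,b}]/I_n$ is the coordinate ring of $\mathcal{D}$, so $I_n$ is the full vanishing ideal of $\mathcal{D}$. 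The remaining and essential point is that this vanishing ideal is \emph{generated} by the size-$(n+1)$ minors, equivalently that the symmetric determinantal ideal they generate is prime (in particular radical) of the expected dimension. I would establish this through the commutative algebra of symmetric determinantal varieties: standard monomial theory and the straightening law for symmetric matrices produce an explicit monomial basis of $\mathbb{C}[Q_{a,b}]/(\text{minors})$ and show that this quotient is a domain of the correct dimension, forcing the minors to cut out the prime ideal $I_n$ on the nose.

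The main obstacle is the second step of the SFT: passing from the easy set-theoretic fact that the minors vanish on $\mathcal{D}$ to the scheme-theoretic statement that they generate the full prime ideal $I_n$. This is exactly the delicate part of symmetric determinantal theory and does not follow from the FFT alone. For the purposes of this paper only the classical result is needed, so I would simply invoke Weyl \cite{W}; a self-contained proof would route through the standard-monomial / straightening-law machinery indicated above.
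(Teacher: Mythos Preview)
The paper does not prove Theorem \ref{weylfft}; it is stated as a classical result and attributed to Weyl \cite{W} without argument. Your outline is a correct sketch of the standard proof of the first and second fundamental theorems for $O(n)$ (reduction to finitely many copies, polarization to multilinear invariants and the Brauer description of $O(n)$-invariant tensors for the FFT, and identification of $\mathbb{C}[Q_{a,b}]/I_n$ with the coordinate ring of the symmetric determinantal variety of rank $\leq n$ matrices, together with the primeness of the symmetric determinantal ideal, for the SFT). You have also correctly isolated the one genuinely nontrivial step, namely that the $(n+1)\times(n+1)$ minors generate the full prime ideal rather than merely an ideal with the same zero set, and you point to the appropriate machinery (straightening laws / standard monomial theory for symmetric matrices). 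Since the paper only needs the statement and invokes \cite{W}, your final remark that one would simply cite Weyl is exactly what the paper does.
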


Under the projection $$\phi_2: (\cH(n)^{O(n)})_{(2)}\ra (\cH(n)^{O(n)})_{(2)}/(\cH(n)^{O(n)})_{(1)}\subset gr(\cH(n)^{O(n)}) \cong (Sym \bigoplus_{k\geq 0} V_k)^{O(n)},$$ the generators $q_{a,b}$ of $(Sym \bigoplus_{k\geq 0} V_k)^{O(n)}$ correspond to vertex operators $\omega_{a,b}$ given by \begin{equation}\label{omegagen} \omega_{a,b} = \sum_{i=1}^n :\partial^a \alpha^i \partial^b \alpha^i:,\ \ \ \ \ \ 0\leq a\leq b.\end{equation}  By Lemma \ref{recon}, the set $\{\omega_{a,b}|~0\leq a \leq b\}$ is a strong generating set for $\cH(n)^{O(n)}$. Note that $\omega_{0,0} = 2L$, where $L$ is the Virasoro element (\ref{virasoro}). The subspace $(\cH(n)^{O(n)})_{(2)}$ of degree at most 2 has a basis $\{1\} \cup \{\omega_{a,b}\}$, and for all $n\geq 0$, the operators $\omega_{a,b}\circ_n$ preserve this vector space. It follows that every term in the OPE formula for $\omega_{a,b}(z) \omega_{c,d}(w)$ is a linear combination of these generators, so they form a Lie conformal algebra. We calculate that for $a,b,c\geq 0$ and $0\leq m \leq a+b+c+1$, 
\begin{equation}\label{bcalc} \omega_{a,b} \circ_m \partial^c \alpha^i = \lambda_{a,b,c,m} \partial^{a+b+c+1-m} \alpha^i\end{equation} where $$\lambda_{a,b,c,m}  = (-1)^b \frac{(b+c+1)!}{(b+c+1-m)!} + (-1)^a \frac{(a+c+1)!}{(a+c+1-m)!}.$$ It follows that for $m\leq a+b+c+1$ we have \begin{equation}\label{opeformula} \omega_{a,b}\circ_m \omega_{c,d} = \lambda_{a,b,c,m} \omega_{a+b+c+1-m,d} + \lambda_{a,b,d,m}\omega_{c,a+b+d+1-m}.\end{equation} In fact, there is a somewhat more economical set of strong generators for $\cH(n)^{O(n)}$. For each $m\geq 0$, let $A_m$ denote the vector space spanned by $\{\omega_{a,b}|~ a+b = m\}$, which is homogeneous of weight $m+2$. Clearly $\text{dim}(A_{2m}) = m+1 = \text{dim}(A_{2m+1})$ for $m\geq 0$. Moreover, $\partial(A_m)\subset A_{m+1}$, and we have \begin{equation}\label{deca} \text{dim} \big(A_{2m} / \partial(A_{2m-1})\big) = 1,\ \ \ \ \ \ \text{dim} \big(A_{2m+1} / \partial(A_{2m})\big) = 0.\end{equation} For $m\geq 0$, define \begin{equation}\label{defofj} j^{2m} = \omega_{0,2m},\end{equation} which is clearly not a total derivative. Hence $A_{2m}$ has a decomposition \begin{equation}\label{decompofa} A_{2m} = \partial (A_{2m-1})\oplus \bra j^{2m}\ket =  \partial^2 (A_{2m-2})\oplus \bra j^{2m}\ket ,\end{equation} where $\bra j^{2m}\ket$ is the linear span of $j^{2m}$. Similarly, $A_{2m+1}$ has a decomposition \begin{equation}\label{decompofai} A_{2m+1} = \partial^2(A_{2m-1})\oplus \bra \partial j^{2m}\ket =  \partial^3 (A_{2m-2})\oplus \bra \partial j^{2m}\ket.\end{equation}  It is easy to see that $\{\partial^{2i} j^{2m-2i}|~ 0\leq i\leq m\}$ and $\{\partial^{2i+1} j^{2m-2i}|\ 0\leq i\leq m\}$ are bases of $A_{2m}$ and $A_{2m+1}$, respectively. Hence each $\omega_{a,b}\in A_{2m}$ and $\omega_{c,d}\in A_{2m+1}$ can be expressed uniquely in the form \begin{equation}\label{lincomb} \omega_{a,b} =\sum_{i=0}^m \lambda_i \partial^{2i}j^{2m-2i},\ \ \ \ \ \  \omega_{c,d} =\sum_{i=0}^m \mu_i \partial^{2i+1}j^{2m-2i}\end{equation} for constants $\lambda_i,\mu_i$, $i=0,\dots,m$. Hence $\{j^{2m}|\ m\geq 0\}$ is an alternative strong generating set for $\cH(n)^{O(n)}$, and it will be convenient to pass back and forth between the sets $\{j^{2m}|\ m\geq 0\}$ and $\{\omega_{a,b}|\ 0\leq a\leq b\}$.

\begin{lemma} \label{ordfingen} $\cH(n)^{O(n)}$ is generated as a vertex algebra by $j^0$ and $j^2$.\end{lemma}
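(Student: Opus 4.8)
The plan is to leverage the fact, just established via Lemma~\ref{recon}, that $\{j^{2m}\mid m\geq 0\}$ is a strong generating set for $\cH(n)^{O(n)}$. Write $\cV$ for the vertex subalgebra generated by $j^0$ and $j^2$, where now we are allowed to use \emph{all} circle products, not merely the negative ones. It then suffices to show $j^{2m}\in\cV$ for every $m\geq 0$, and I would prove this by induction on $m$, the cases $m=0,1$ being the hypothesis that $j^0,j^2$ are generators. Throughout I use that $\cV$ contains the unit $1$ and is closed under $\partial=(\cdot)\circ_{-2}1$, so that derivatives of elements of $\cV$ again lie in $\cV$.

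For the inductive step I would assume $j^0,\dots,j^{2m-2}\in\cV$ with $m\geq 2$, and feed the pair $(j^2,j^{2m-2})=(\omega_{0,2},\omega_{0,2m-2})$ into the OPE formula (\ref{opeformula}) with circle product $\circ_1$. A direct evaluation of the relevant $\lambda$-constants yields
\begin{equation*}
j^2\circ_1 j^{2m-2}=4\,\omega_{2,2m-2}+4m\,j^{2m}.
\end{equation*}
The right-hand side is homogeneous in $A_{2m}$, so by (\ref{lincomb}) it is a linear combination of $j^{2m}$ together with the derivative terms $\partial^{2i}j^{2m-2i}$ for $1\leq i\leq m$; the latter all lie in $\cV$ by the inductive hypothesis, and the whole left-hand side lies in $\cV$. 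Hence $j^{2m}\in\cV$ as soon as its coefficient on the right is nonzero.

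The one genuine point, and the step I expect to be the main obstacle, is therefore to pin down the coefficient $\alpha_m$ of $j^{2m}$ when $\omega_{2,2m-2}$ is expanded in the basis of (\ref{lincomb}). For this I would use the Leibniz relation $\partial\omega_{a,b}=\omega_{a+1,b}+\omega_{a,b+1}$, immediate from applying $\partial$ to (\ref{omegagen}). Since each basis vector $\partial^{2i}j^{2m-2i}$ with $i\geq 1$ is a total derivative lying in $\partial(A_{2m-1})$, while $j^{2m}$ is not, the sought coefficient $\alpha_m$ is exactly the image of $\omega_{2,2m-2}$ in the one-dimensional quotient $A_{2m}/\partial(A_{2m-1})$ of (\ref{deca}), measured against the image of $j^{2m}$. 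Reducing the relation modulo $\partial(A_{2m-1})$ gives $\omega_{2,2m-2}\equiv-\omega_{1,2m-1}\equiv\omega_{0,2m}=j^{2m}$, so $\alpha_m=1$. Consequently the coefficient of $j^{2m}$ in $j^2\circ_1 j^{2m-2}$ is $4\alpha_m+4m=4(m+1)\neq 0$, which lets me solve for $j^{2m}$ and closes the induction, proving that $\cV=\cH(n)^{O(n)}$.
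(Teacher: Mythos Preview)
Your proof is correct and follows essentially the same route as the paper's: induct on $m$, apply the OPE formula (\ref{opeformula}) to compute $j^2\circ_1 j^{2m-2}=4\,\omega_{2,2m-2}+4m\,j^{2m}$, reduce $\omega_{2,2m-2}$ modulo $\partial(A_{2m-1})$ using the Leibniz rule $\partial\omega_{a,b}=\omega_{a+1,b}+\omega_{a,b+1}$, and check that the resulting coefficient of $j^{2m}$ is nonzero. Your reduction $\omega_{2,2m-2}\equiv -\omega_{1,2m-1}\equiv \omega_{0,2m}=j^{2m}$ is in fact cleaner than the paper's statement (which records the sign as $-j^{2k+2}$ and the coefficient as $3+4k$; your $4(m+1)$ is the correct value), but the logical structure is identical.
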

\begin{proof} Let $\cJ$ denote the vertex subalgebra of $\cH(n)^{O(n)}$ generated by $j^0$ and $j^2$. We need to show that $j^{2m}\in\cJ$ for all $m\geq 2$. Specializing (\ref{opeformula}) shows that \begin{equation}\label{specialope} j^2\circ_1 j^{2k} = 4 \omega_{2,2k} + (4+4k) j^{2k+2}.\end{equation} Moreover, it is easy to check that $\omega_{2,2k} \equiv - j^{2k+2}$ modulo second derivatives. It follows that $j^2\circ_1j^{2k} \equiv (3+4k)j^{2k+2}$ modulo a linear combination of elements of the form $\partial^{2i} j^{2k+2-2i}$ for $1\leq i \leq k+1$. The claim then follows by induction on $k$. \end{proof}

Consider the category of all vertex algebras with generators $\{J^{2m}|~m\geq 0\}$, which satisfy the same OPE relations as the generators $\{j^{2m}|~m\geq 0\}$ of $\cH(n)^{O(n)}$. Since the vector space with basis $\{1\}\cup \{\partial^lj^{2m}|~l,m\geq 0\}$ is closed under all nonnegative circle products, it forms a Lie conformal algebra. By Theorem 7.12 of \cite{BK}, this category possesses a universal object $\cV_n$, which is {\it freely} generated by $\{J^{2m}|\ m\geq 0\}$. In other words, there are no nontrivial normally ordered polynomial relations among the generators and their derivatives in $\cV_n$. Then $\cH(n)^{O(n)}$ is a quotient of $\cV_n$ by an ideal $\cI_n$, and since $\cH(n)^{O(n)}$ is a simple vertex algebra, $\cI_n$ is a maximal ideal. Let $\pi_n: \cV_n\ra \cH(n)^{O(n)}$ denote the quotient map, which sends $J^{2m}\mapsto j^{2m}$. Using the formula (\ref{lincomb}), which holds in $\cH(n)^{O(n)}$ for all $n$, we can define an alternative strong generating set $\{\Omega_{a,b}| ~0\leq a\leq b\}$ for $\cV_n$ by the same formula: for $a+b = 2m$ and $c+d = 2m+1$,
$$\Omega_{a,b} =\sum_{i=0}^m \lambda_i \partial^{2i}J^{2m-2i},\ \ \ \ \ \  \Omega_{c,d} =\sum_{i=0}^m \mu_i \partial^{2i+1}J^{2m-2i}.$$ Clearly $\pi_n(\Omega_{a,b}) = \omega_{a,b}$. We will use the same notation $A_m$ to denote the linear span of $\{\Omega_{a,b}|~ a+b = m\}$, when no confusion can arise. Finally, $\cV_{n}$ has a good increasing filtration in which $(\cV_n)_{(2k)}$ is spanned by iterated Wick products of the generators $J^{2m}$ and their derivatives, of length at most $k$, and $(\cV_n)_{(2k+1)} = (\cV_n)_{(2k)}$. Equipped with this filtration, $\cV_n$ lies in the category $\cR$, and $\pi_n$ is a morphism in $\cR$.

\begin{remark}Since the vertex operators $J^{2m}$ satisfy the same OPE relations as the $j^{2m}$, $\cV_n$ is also generated as a vertex algebra by $J^0$ and $J^2$.\end{remark}

Recall the variables $Q_{a,b}$ and $q_{a,b}$ appearing in Theorem \ref{weylfft}. Since $\cV_n$ is freely generated by $\{J^{2m}|\ m\geq 0\}$, and $\{\Omega_{a,b}|\ 0\leq a\leq b\}$ and $\{\partial^k J^{2m}|\ k,m\geq 0\}$ form bases for the same space, we may identify $gr(\cV_n)$ with $\mathbb{C}[Q_{a,b}]$, and we identify $gr(\cH(n)^{O(n)})$ with $\mathbb{C}[q_{a,b}]/I_n$. Under this identification, $gr(\pi_{n}): gr(\cV_n) \ra gr(\cH(n)^{O(n)})$ is just the quotient map sending $Q_{a,b}\mapsto q_{a,b}$. Clearly the projection $\pi_{n}: \cV_n\ra \cH(n)^{O(n)}$ maps each filtered piece $(\cV_n)_{(k)}$ onto $(\cH(n)^{O(n)})_{(k)}$, so the hypotheses of Lemma \ref{idealrecon} are satisfied. Since $I_{n} = Ker (gr(\pi_{n}))$ is generated by the determinants $d_{I,J}$, we can apply Lemma \ref{idealrecon} to find vertex operators $D_{I,J}\in (\cV_{n})_{(2n+2)}$ satisfying $\phi_{2n+2}(D_{I,J}) = d_{I,J}$, such that $\{D_{I,J}\}$ generates $\cI_{n}$. Since $\Omega_{a,b}$ has weight $a+b+2$, \begin{equation}\label{wtod} wt(D_{I,J}) = |I| + |J| +2n+2,\ \ \ \ \ \ |I| =\sum_{a=0}^{n} i_a,\ \ \ \ \ \ |J| =\sum_{a=0}^{n} j_a.\end{equation}

In general, the vertex operators $a_i(z)$ furnished by Lemma \ref{idealrecon} satisfying $\phi_{d_i}(a_i(z)) = a_i$ which generate $\cI$ are not unique. However, in our case, $D_{I,J}$ is uniquely determined by the conditions \begin{equation}\label{uniquedij} \phi_{2n+2}(D_{I,J}) = d_{I,J},\ \ \ \ \ \ \pi_{n}(D_{I,J}) = 0.\end{equation} If $D'_{I,J}$ is another vertex operator satisfying (\ref{uniquedij}), $D_{I,J} - D'_{I,J}$ lies in $(\cV_n)_{(2n)} \cap \cI_{n}$, and since there are no relations in $\cH(n)^{O(n)}$ of degree less than $2n+2$, we have $D_{I,J} - D'_{I,J}=0$. There is a distinguished element $D_0 = D_{(0,\dots,n),(0,\dots,n)}$ which is the unique element of $\cI_n$ of minimal weight $n^2+3n+2$. It is annihilated by all operators $J^{2m}(k)$ for $k>2m+1$, which lower the weight by $k-2m-1$.

Given a homogeneous polynomial $p\in gr(\cV_{n})\cong \mathbb{C}[Q_{a,b}|~0\leq a\leq b]$ of degree $k$ in the variables $Q_{a,b}$, a {\it normal ordering} of $p$ will be a choice of normally ordered polynomial $P\in (\cV_{n})_{(2k)}$, obtained by replacing $Q_{a,b}$ by $\Omega_{a,b}$, and replacing ordinary products with iterated Wick products of the form (\ref{iteratedwick}). Of course $P$ is not unique, but for any choice of $P$ we have $\phi_{2k}(P) = p$, where $\phi_{2k}: (\cV_{n})_{(2k)} \ra (\cV_{n})_{(2k)} /(\cV_{n})_{(2k-1)} \subset gr(\cV_{n})$ is the usual projection. For the rest of this section, $D^{2k}$, $E^{2k}$, $F^{2k}$, etc., will denote elements of $(\cV_{n})_{(2k)}$ which are homogeneous, normally ordered polynomials of degree $k$ in the vertex operators $\Omega_{a,b}$.

Let $D_{I,J}^{2n+2}\in (\cV_{n})_{(2n+2)}$ be some normal ordering of $d_{I,J}$. Then $$\pi_{n}(D_{I,J}^{2n+2}) \in (\cH(n)^{O(n)})_{(2n)},$$ and $\phi_{2n}(\pi_n(D_{I,J}^{2n+2})) \in gr(\mathcal{H}(n)^{O(n)})$ can be expressed uniquely as a polynomial of degree $n$ in the variables $q_{a,b}$. Choose some normal ordering of the corresponding polynomial in the variables $\Omega_{a,b}$, and call this vertex operator $-D^{2n}_{I,J}$. Then $D^{2n+2}_{I,J} + D^{2n}_{I,J}$ has the property that $\pi_{n}(D^{2n+2}_{I,J} + D^{2n}_{I,J})\in (\cH(n)^{O(n)})_{(2n-2)}.$ Continuing this process, we arrive at a vertex operator $\sum_{k=1}^{n+1} D^{2k}_{I,J}$ in the kernel of $\pi_{n}$. We must have 
\begin{equation}\label{decompofd} D_{I,J} = \sum_{k=1}^{n+1}D^{2k}_{I,J},\end{equation} since $D_{I,J}$ is uniquely characterized by (\ref{uniquedij}).

In this decomposition, the term $D^2_{I,J}$ lies in the space $A_m$ spanned by $\{\Omega_{a,b}|~a+b=m\}$, for $m = |I| +|J|+2n$. Recall that for $m$ even, $A_m  = \partial^2 A_{m-2} \oplus \bra J^{m}\ket$, and for $m$ odd, $A_m  = \partial^3 A_{m-3} \oplus \bra \partial J^{m-1}\ket$. For $m$ even (respectively odd), define $pr_m: A_m\ra \bra J^m\ket$ (respectively $pr_m: A_m\ra \bra \partial J^{m-1}\ket$) to be the projection onto the second term. Define the {\it remainder} \begin{equation}\label{defofrij} R_{I,J} = pr_m(D^2_{I,J}).\end{equation}

\begin{lemma} \label{uniquenessofr} Given $D_{I,J}\in\cI_n$ as above, suppose that $D_{I,J} = \sum_{k=1}^{n+1} D^{2k}_{I,J}$ and $D_{I,J} = \sum_{k=1}^{n+1} \tilde{D}^{2k}_{I,J}$ are two different decompositions of $D_{I,J}$ of the form (\ref{decompofd}). Then $$D^2_{I,J} - \tilde{D}^2_{I,J} \in \partial^2 (A_{m-2}),$$ where $m = |I| +|J| +2 n$. In particular, $R_{I,J}$ is independent of the choice of decomposition of $D_{I,J}$.\end{lemma}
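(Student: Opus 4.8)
The plan is to exploit the uniqueness of $D_{I,J}$ itself, which is already established by condition (\ref{uniquedij}), and to track how the non-uniqueness in the \emph{choices} made during the decomposition procedure propagates through the filtration degrees. First I would set $E^{2k} = D^{2k}_{I,J} - \tilde{D}^{2k}_{I,J}$ for $k = 1,\dots,n+1$, so that summing (\ref{decompofd}) for the two decompositions and using $D_{I,J} = D_{I,J}$ gives $\sum_{k=1}^{n+1} E^{2k} = 0$. Each $E^{2k}$ lies in $(\cV_n)_{(2k)}$ and is a homogeneous normally ordered polynomial of degree $k$ in the $\Omega_{a,b}$ (the difference of two such). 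The goal is to pin down $E^2 = D^2_{I,J} - \tilde{D}^2_{I,J}$, and since $E^2 \in A_m$, applying the projection $pr_m$ will show that $pr_m(E^2) = 0$, which is exactly the assertion that $R_{I,J}$ is decomposition-independent.

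The key observation is to examine the images under $\pi_n$ and compare leading filtration terms. Since both decompositions produce elements of $\ker(\pi_n)$ at each stage of the construction, I would argue that the top-degree piece $E^{2n+2}$ must be controlled first. At filtration degree $2n+2$ the projection $\phi_{2n+2}$ kills everything of lower degree, and both $D^{2n+2}_{I,J}$ and $\tilde{D}^{2n+2}_{I,J}$ satisfy $\phi_{2n+2}(\,\cdot\,) = d_{I,J}$; hence $\phi_{2n+2}(E^{2n+2}) = 0$, so $E^{2n+2} \in (\cV_n)_{(2n)}$. Combined with $E^{2n+2}$ being (the difference of two) degree-$(n+1)$ normally ordered polynomials, this forces a cancellation that I can feed back into the relation $\sum_k E^{2k} = 0$. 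Iterating this downward through $k = n+1, n, \dots, 2$, at each step the vanishing of the appropriate $\phi_{2k}$ (using that the two intermediate normal orderings agree modulo lower filtration by construction, since $\phi_{2k}$ of each records the \emph{same} polynomial in the $q_{a,b}$ after applying $\pi_n$) shows that the discrepancy $E^2$ that survives at the bottom lies in the image of lower-weight data. The crucial structural input is the weight-graded decomposition $A_m = \partial^2 A_{m-2} \oplus \langle J^m\rangle$ (for $m$ even, and the analogous odd version), which isolates the non-total-derivative part that $pr_m$ extracts.

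The main obstacle I anticipate is making the inductive bookkeeping precise at the level of the \emph{weight-homogeneous} space $A_m$ rather than merely the filtration. The ambiguity in normal ordering at each stage contributes terms that, after projecting to $gr$, vanish, but one must verify that the surviving ambiguity in $E^2$ lands precisely in $\partial^2(A_{m-2})$ and not in the complementary line $\langle J^m\rangle$. This requires checking that the only freedom in the construction of $D^2_{I,J}$ comes from choosing different normal orderings of the same $gr$-polynomial at the preceding steps, and that such changes alter $D^2_{I,J}$ only by total derivatives of the form $\partial^2(\text{element of }A_{m-2})$. Concretely, a change of normal ordering at degree $2k$ differs by a lower-filtration vertex operator whose weight-$(m)$ component, when pushed down to $A_m$, is a derivative; the cleanest way to see this is that $E^2$ is a weight-$(m+2)$ element of $A_m$ whose image under $\phi_2\circ\pi_n$ vanishes, and any element of $A_m$ that is a total derivative after one application lies in $\partial A_{m-1}$, which for the parities in play refines to $\partial^2 A_{m-2}$ by the dimension count (\ref{deca}). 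Once that refinement is in hand, $pr_m(E^2) = pr_m(\partial^2(A_{m-2})) = 0$ follows immediately from the definition of $pr_m$ as the projection onto the second summand of $A_m = \partial^2 A_{m-2} \oplus \langle J^m\rangle$, giving $R_{I,J} = \tilde{R}_{I,J}$ and completing the argument.
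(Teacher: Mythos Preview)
Your outline correctly identifies the skeleton --- set $E^{2k}=D^{2k}_{I,J}-\tilde D^{2k}_{I,J}$, note $\sum_k E^{2k}=0$, and try to show $E^2\in\partial^2 A_{m-2}$ --- but the resolution you propose for what you yourself call ``the main obstacle'' does not go through.  You assert that $E^2$ is an element of $A_m$ ``whose image under $\phi_2\circ\pi_n$ vanishes.''  This is false in general: $\pi_n$ is injective on $(\cV_n)_{(2)}$ (there are no relations below filtration degree $2n+2$), and $\phi_2$ carries $\omega_{a,b}$ to $q_{a,b}$; so $\phi_2\circ\pi_n$ is injective on $A_m$, and your claim would force $E^2=0$, which you are not entitled to.  More fundamentally, the filtration projections $\phi_{2k}$ cannot possibly separate $\partial^2 A_{m-2}$ from the complementary line $\langle J^m\rangle$ inside $A_m$, since both sit in the same filtration degree.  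The descent through $k=n+1,n,\dots,2$ tells you only that certain differences lie in lower filtration; it gives no information about \emph{where in $A_m$} the final discrepancy $E^2$ lands.

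What is actually needed --- and what the paper supplies --- is a direct computation with the vertex-algebra identities (\ref{vaidi}) and (\ref{vaidii}) governing reordering and reassociation of iterated Wick products.  The paper first checks that $\Omega_{a,b}\circ_0\Omega_{c,d}$ is already a total derivative (because it lies in $A_r$ for $r$ odd, and $A_r=\partial A_{r-1}$).  Then for any normally ordered monomial $\mu=\,:a_1\cdots a_m:$ and any permutation $\tilde\mu$, one proves by induction on $m$ that in any decomposition of $\mu-\tilde\mu$ the degree-two term is a \emph{second} derivative: the base case uses (\ref{vaidii}) together with the previous observation, and the inductive step uses (\ref{vaidi}), noting that the only degree-two contributions come from $\partial^{k+1}a_i$ with $k\ge 1$.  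This yields the general statement that for any $E\in(\cV_n)_{(2m)}$ with two decompositions $\sum E^{2k}=\sum F^{2k}$, the difference $E^2-F^2$ is a second derivative; specializing to $E=D_{I,J}$ gives the lemma.  Your filtration bookkeeping never touches these identities, and without them the argument cannot conclude.
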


\begin{proof} This is analogous to Corollary 4.8 of \cite{LI}, and the proof is almost the same. First, we claim that for all $j,k,l,m\geq 0$, $\Omega_{j,k}\circ_0\Omega_{l,m}$ is a total derivative. In view of the decomposition (\ref{deca}) which holds in $\cV_n$ as well as $\cH(n)^{O(n)}$, it suffices to show that $J^{2k}\circ_0 J^{2l}$ is a total derivative for all $k,l$. This is clear because $J^{2k}\circ_0 J^{2l}$ lies in $A_m$ for $m=2k+2l+1$, and $A_m = \partial(A_{m-1})$.

Next, let $\mu = ~:a_1 \cdots a_m:$ be a normally ordered monomial in $(\cV_{n})_{(2m)}$, where each $a_i$ is one of the generators $\Omega_{a,b}$. Let $\tilde{\mu} = :a_{i_1} \cdots a_{i_m}:$, where $(i_1,\dots, i_m)$ is some permutation of $(1,\dots,m)$. We claim that for any decomposition
$\mu - \tilde{\mu} = \sum_{k=1}^{m-1} E^{2k}$ of the difference $\mu - \tilde{\mu}\in (\cV_{n})_{(2m-2)}$, the term $E^2$ is a second derivative. To prove this statement, we proceed by induction on $m$. For $m=1$, there is nothing to prove since $\mu - \tilde{\mu} =0$. For $m=2$, and $\mu = ~:\Omega_{a,b}\Omega_{c,d}:$, we have \begin{equation}\label{reari} \mu - \tilde{\mu} = ~:\Omega_{a,b}\Omega_{c,d}: ~- ~: \Omega_{c,d}\Omega_{a,b}:~ = \sum_{i\geq 0} \frac{(-1)^i}{(i+1)!}\partial^{i+1}(\Omega_{a,b}\circ_{i} \Omega_{c,d}),\end{equation} by (\ref{vaidii}). Since $\Omega_{a,b}\circ_{0} \Omega_{c,d}$ is already a total derivative, it follows that $\mu -\tilde{\mu}$ is a second derivative, as claimed. Next, we assume the result for $r\leq m-1$. Since the permutation group on $m$ letters is generated by the transpositions $(i,i+1)$ for $i=1,\dots,m-1$, we may assume without loss of generality that $$\tilde{\mu} = ~:a_1 \cdots a_{i-1} a_{i+1} a_i a_{i+2} \cdots a_m:.$$ If $i>1$, we have $\mu - \tilde{\mu} = ~:a_1 \cdots a_{i-1} f:$, where $f =~ :a_i \cdots a_m:~ -~: a_{i+1}a_i a_{i+2} \cdots a_m$, which lies in $(\cM_{-n})_{(2m-2i+2)}$. Since each term of $f$ has degree at least 2, it follows that $\mu - \tilde{\mu}$ can be expressed in the form $\sum_{k=i}^{m-1}E^{2k}$. Since $i>1$, there is no term of degree $2$. Given any rearrangement $\mu - \tilde{\mu}=\sum_{k=1}^{m-1}F^{2k}$, it follows from our inductive hypothesis that the term $F^2$ is a second derivative.

Suppose next that $i=1$, so that $\tilde{\mu} = ~:a_2 a_1 a_3 \cdots a_m:$. Define $$\nu = \ :(:a_1 a_2:) a_3 \cdots a_m:,\ \ \ \ \ \ \tilde{\nu} = \ :(:a_2 a_1:) a_3 \cdots a_m:,$$ and note that $\nu  - \tilde{\nu} = \ : (:a_1 a_2: -: a_2 a_1: )f:$, where $f = \ :a_3 \cdots a_m:$. By (\ref{reari}), $:a_1 a_2: - : a_2 a_1:$ is homogeneous of degree 2, so $\nu  - \tilde{\nu}$ is a linear combination of monomials of degree $2m-2$. By inductive assumption, any rearrangement $\nu - \tilde{\nu} =\sum_{k=1}^{m-1}F^{2k}$ has the property that $F^2$ is a second derivative.

Next, by (\ref{vaidi}), we have \begin{equation}\label{diffi} \mu - \nu = - \sum_{k\geq 0} \frac{1}{(k+1)!} \bigg( :(\partial^{k+1} a_1) (a_2 \circ_k f): + : (\partial^{k+1} a_2)(a_1\circ_k f):\bigg).\end{equation} Since the operators $\circ_k$ for $k\geq 0$ are homogeneous of degree $-2$, each term appearing in (\ref{diffi}) has degree at most $2m-2$. Moreover, $$deg \big(:(\partial^{k+1} a_1) (a_2\circ_k f):\big) = 2+ deg(a_2 \circ_k f),\ \ \ deg \big(:(\partial^{k+1} a_2) (a_1\circ_k f):\big) = 2+ deg(a_1 \circ_k f),$$ so the only way to obtain terms of degree $2$ is for $a_2\circ_k f$ or $a_1\circ_k f$ to be a scalar. This can only happen if $k>0$, in which case we obtain either $\partial^{k+1} a_1$ or $\partial^{k+1} a_2$, which are second derivatives. By inductive assumption, any rearrangement of $\mu - \nu$ can contain only second derivatives in degree 2. Similarly, $\tilde{\mu} - \tilde{\nu}$ has degree at most $2m-2$, and any rearrangement of $\tilde{\mu} - \tilde{\nu}$ can only contain second derivatives in degree 2. Since $\mu - \tilde{\mu} = (\mu - \nu) + (\nu - \tilde{\nu}) + (\tilde{\nu} - \tilde{\mu})$, the claim follows. 

An immediate consequence is the following statement. Let $E\in (\cV_{n})_{(2m)}$ be a vertex operator of degree $2m$, and choose a decomposition \begin{equation}\label{dcnew} E = \sum_{k=1}^m E^{2k},\end{equation} where $E^{2k}$ is a homogeneous, normally ordered polynomial of degree $k$ in the variables $\Omega_{a,b}$. If $E = \sum_{k=1}^m F^{2k}$ is any rearrangement of (\ref{dcnew}), i.e., another decomposition of $E$ of the same form, then $E^{2}-F^2$ is a second derivative. Finally, specializing this to the case $E=D_{I,J}$ proves the lemma. \end{proof}

\begin{lemma}Let $R_0$ denote the remainder of the element $D_0$. The condition $R_0 \neq 0$ is equivalent to the existence of a decoupling relation of the form $j^{n^2+3n} = P(j^0,j^2,\dots, j^{n^2+3n-2})$ in $\cH(n)^{O(n)}$.\end{lemma}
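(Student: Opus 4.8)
The plan is to prove both implications by applying the quotient map $\pi_n$ to the minimal-weight relation $D_0$ and tracking precisely where the generator $j^{n^2+3n}$ can occur. Recall from (\ref{decompofd}) that $D_0 = \sum_{k=1}^{n+1} D_0^{2k}$, where $D_0^{2k}$ is a normally ordered polynomial of degree $k$ in the $\Omega_{a,b}$, and that the remainder $R_0 = pr_{n^2+3n}(D_0^2)$ is the image of the linear part $D_0^2\in A_{n^2+3n}$ under the projection onto $\bra J^{n^2+3n}\ket$ (note that $n^2+3n$ is even, so $pr_{n^2+3n}$ is indeed the projection onto $\bra J^{n^2+3n}\ket$). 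The crucial bookkeeping observation, which I would establish at the outset, is that among the terms of the relation $\pi_n(D_0)=0$, the only one involving $j^{n^2+3n}$ itself is the coefficient of $J^{n^2+3n}$ in $D_0^2$, namely $R_0$.

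For the forward direction, suppose $R_0\neq 0$. Writing $D_0^2 = \sum_{i\geq 0}\lambda_i \partial^{2i} J^{n^2+3n-2i}$ in the basis of $A_{n^2+3n}$, the hypothesis $R_0\neq 0$ says exactly that $\lambda_0\neq 0$. Applying $\pi_n$ to (\ref{decompofd}) gives $0 = \sum_{i\geq 0}\lambda_i \partial^{2i} j^{n^2+3n-2i} + \sum_{k=2}^{n+1}\pi_n(D_0^{2k})$. In the first sum the terms with $i\geq 1$ involve only $j^0,\dots,j^{n^2+3n-2}$ and their derivatives. In the second sum each $D_0^{2k}$ with $k\geq 2$ is a Wick monomial of degree $k\geq 2$ and total weight $n^2+3n+2$; since every generator $j^{2\ell}$ has weight $2\ell+2\geq 2$, no single factor can carry weight $n^2+3n+2$, so these terms likewise involve only $j^0,\dots,j^{n^2+3n-2}$. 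Hence $j^{n^2+3n}$ appears exactly once, with the nonzero coefficient $\lambda_0$, and I can solve for it to produce the desired decoupling relation $j^{n^2+3n}=P(j^0,\dots,j^{n^2+3n-2})$.

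For the converse, suppose such a decoupling relation holds. Choosing a normal ordering $\tilde P\in\cV_n$ of $P$, the element $J^{n^2+3n}-\tilde P$ lies in $\cI_n=\ker\pi_n$, is nonzero (since $J^{n^2+3n}$ is a free generator that does not occur in $\tilde P$), and is homogeneous of weight $n^2+3n+2$. Because $\cI_n$ is generated as a vertex algebra ideal by the $D_{I,J}$, all of weight at least $n^2+3n+2$, its graded piece in weight $n^2+3n+2$ is one-dimensional and spanned by $D_0$; thus $J^{n^2+3n}-\tilde P = \kappa D_0$ for some $\kappa\neq 0$. I would then compute the remainder of $J^{n^2+3n}-\tilde P$ directly: its linear part is $J^{n^2+3n}$ minus the linear terms of $\tilde P$, and the latter all lie in $\partial^2 A_{n^2+3n-2}$ since $\tilde P$ involves only $J^{2\ell}$ with $2\ell\leq n^2+3n-2$, so applying $pr_{n^2+3n}$ yields $J^{n^2+3n}\neq 0$. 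By the well-definedness and linearity of the remainder established in Lemma \ref{uniquenessofr}, this same remainder equals $\kappa R_0$, whence $R_0\neq 0$.

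The main obstacle is the weight bookkeeping in the forward direction, and in particular ruling out any appearance of $j^{n^2+3n}$ in the higher-degree Wick monomials $\pi_n(D_0^{2k})$. This is where the precise minimal relation weight $n^2+3n+2$, together with the fact that the smallest generator weight is $2$, enters in an essential way; once this is secured, both implications reduce to linear algebra in $A_{n^2+3n}$ together with the one-dimensionality of the minimal-weight graded piece of $\cI_n$.
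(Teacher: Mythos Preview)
Your proof is correct and follows essentially the same approach as the paper. The paper's forward direction is nearly identical to yours (writing $D_0^2 = \lambda J^{n^2+3n} + \partial^2\omega$ and applying $\pi_n$), though the paper omits your explicit weight-counting argument that no $j^{n^2+3n}$ can appear in $\pi_n(D_0^{2k})$ for $k\geq 2$; for the converse the paper simply invokes the uniqueness (up to scalar) of $D_0$ as the minimal-weight element of $\cI_n$, while you spell out the lift to $\cV_n$ and the direct computation of the remainder of $J^{n^2+3n}-\tilde P$. One small remark: your justification that the weight-$(n^2+3n+2)$ graded piece of $\cI_n$ is one-dimensional (``because $\cI_n$ is generated by the $D_{I,J}$, all of weight at least $n^2+3n+2$'') is not quite sufficient as stated, since vertex algebra ideal operations can lower weight; the paper instead establishes this earlier via the passage to $gr(\cI_n)\cong I_n$ and the fact that there are no relations in $\cH(n)^{O(n)}$ of filtration degree below $2n+2$.
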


\begin{proof} Let $D_0 =  \sum_{k=1}^{n+1}D^{2k}_0$ be a decomposition of $D_0$ of the form (\ref{decompofd}). If $R_0\neq 0$, we have $D^2_0 = \lambda J^{n^2+3n} + \partial^2 \omega$ for some $\lambda \neq 0$ and some $\omega\in A_{n^2+3n-2}$. Applying the projection $\pi_{n}:\cV_{n}\ra \cH(n)^{O(n)}$, since $\pi_{n}(D_0)=0$ we obtain $$j^{n^2+3n}= -\frac{1}{\lambda}\big( \partial^2 \pi_{n}(\omega) + \sum_{k=2}^{n+1} \pi_{n}(D^{2k}_0) \big),$$ which is a decoupling relation of the desired form. The converse follows from the fact that $D_0$ is the unique element of the ideal $\cI_{n}$ of weight $n^2+3n+2$, up to scalar multiples. \end{proof}

\begin{conj} \label{mainconj} For all $n\geq 1$, the remainder $R_0$ is nonzero.\end{conj}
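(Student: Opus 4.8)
The plan is to reduce Conjecture~\ref{mainconj} to the non-vanishing of a single explicit scalar and then attack that scalar uniformly in $n$. Write $R_0 = \lambda\, J^{n^2+3n}$; since $m = |I|+|J|+2n = n^2+3n$ is even, $R_0$ indeed lies in $\langle J^{n^2+3n}\rangle$, and by Lemma~\ref{uniquenessofr} the coefficient $\lambda$ is independent of all choices made in the descent~(\ref{decompofd})--(\ref{defofrij}). The first step is to exploit the characterization of $D_0$ as the unique (up to scalar) element of $\cI_n$ of minimal weight $n^2+3n+2$: it is the unique weight-$(n^2+3n+2)$ vector of $\cV_n$ that is annihilated by every mode $J^{2m}(k)$ with $k>2m+1$ and whose top-degree part equals a normal ordering of the determinant $\det[\Omega_{i,j}]_{0\le i,j\le n}$. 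Because the weight-$(n^2+3n+2)$ subspace of $\cV_n$ is finite-dimensional, these are finitely many linear conditions, so $D_0$, and hence $\lambda$, is determined by finite linear algebra; this is precisely what the computer carries out for $n=2$ and $n=3$.

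For a uniform argument I would compute $\lambda$ by an explicit descent. Fix the normal ordering $D_0^{2n+2}$ of $\det[\Omega_{i,j}]_{0\le i,j\le n}$ and apply $\pi_n$; using the closed OPE formula~(\ref{opeformula}) together with the reordering and non-associativity identities~(\ref{vaidi})--(\ref{vaidiv}), rewrite $\pi_n(D_0^{2n+2})$ in the strong generating set and iterate, lowering the degree by two at each stage until reaching degree $2$. Tracking the component along $J^{n^2+3n}$ through this process expresses $\lambda$ as an alternating sum over the symmetric group $S_{n+1}$ (coming from the expansion of the determinant) whose summands are products of the structure constants $\lambda_{a,b,c,m}$ appearing in~(\ref{bcalc})--(\ref{opeformula}). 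The goal is to put $\lambda$ into a closed form --- ideally a product of factorials or the value of a manifestly nonzero polynomial in $n$.

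To establish $\lambda\neq 0$ I see two complementary routes. The first is combinatorial: identify an extremal monomial in the $S_{n+1}$-sum, singled out by a degree or parity constraint on the index pairs, that receives a contribution from a \emph{unique} permutation and therefore cannot be cancelled. The second is a free-field interpretation: realize $\cH(n)$ by $n$ free bosons, so that $\det[\Omega_{i,j}]$ becomes a Wronskian-type expression in the $\alpha^i$ and their derivatives, and compute $\lambda$ by Wick contraction, where it should reduce to a Gram- or Vandermonde-type determinant that is visibly nonzero. I would also import techniques from the parallel $GL_n$ computation in~\cite{LI}, where the analogue of this coefficient is nonzero and explicitly known.

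The hard part will be the non-vanishing in \emph{general} $n$. The orthogonal case differs from the $GL_n$ case of~\cite{LI} in two structurally awkward ways: the generators are symmetric, $Q_{a,b}=Q_{b,a}$, so the relation is a symmetric minor rather than a determinant in disjoint row and column variables; and the economical generating set collapses to the \emph{even} fields $j^{2m}$ only, since by~(\ref{deca}) every odd $A_{2m+1}$ is a space of derivatives. Both features introduce delicate cancellations into the alternating sum for $\lambda$, and it is not clear a priori that the sum fails to vanish identically. Ruling out total cancellation --- rather than verifying it case by case as for $n=2,3$ --- is exactly the obstacle that has so far confined the result to small $n$, and it is what any proof of Conjecture~\ref{mainconj} must overcome.
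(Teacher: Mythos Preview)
This is a conjecture, not a theorem, and the paper does \emph{not} prove it in general. The paper's ``proof'' consists solely of: an easy direct check that $R_0=-\tfrac{5}{4}J^4$ for $n=1$, and explicit computer computations (reproduced in the Appendix) giving $R_0=\tfrac{149}{600}J^{10}$ for $n=2$ and $R_0=-\tfrac{2419}{705600}J^{18}$ for $n=3$. The author states explicitly that for $n>3$ the remainder is hard to compute directly, that unlike the $\cW_{1+\infty,-n}$ case of \cite{LI} there is no visible recursive structure permitting induction on $n$, and that a conceptual explanation is still lacking.

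Your proposal is not a proof either; it is a research plan. You correctly reduce the conjecture to the non-vanishing of a single scalar $\lambda$, correctly observe that $\lambda$ is determined by finite linear algebra for each fixed $n$, and correctly note that this is exactly what the computer does for $n=2,3$. But from that point on you only \emph{outline} strategies (an alternating $S_{n+1}$-sum in the structure constants $\lambda_{a,b,c,m}$, an extremal-monomial argument, a free-field Wick/Gram--Vandermonde interpretation, importing techniques from \cite{LI}) without executing any of them. Indeed, your final paragraph concedes this: you identify precisely the obstacle --- ruling out total cancellation in the symmetric-minor case with only even generators surviving --- and say that this ``is what any proof of Conjecture~\ref{mainconj} must overcome.'' That is an accurate diagnosis, and it matches the author's own assessment, but it means there is a genuine gap: no step in your write-up actually establishes $\lambda\neq 0$ for a single $n$, let alone all $n$. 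As written, your text neither reproduces the paper's small-$n$ verifications nor goes beyond them.
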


For $n=1$, it is easy to check that $R_0 = -\frac{5}{4} J^4$, and in the Appendix, we write down computer calculations that prove this conjecture in the cases $n=2$ and $n=3$. For $n=2$, we have $R_0 = \frac{149}{600} J^{10}$, and for $n=3$ we have $R_0 = -\frac{2419}{705600} J^{18}$. However, for $n>3$ it is difficult to calculate $R_0$ directly. Unlike the case of $\cW_{1+\infty,-n}$ where a similar remainder was shown to be nonzero in \cite{LI}, there seems to be no nice recursive structure that allows us to proceed by induction on $n$. Even in the case $\cW_{1+\infty,-n}$, we still lack a conceptual explanation for this phenomenon, and we expect that such an explanation will be necessary to prove our conjecture for $\cH(n)^{O(n)}$.

The next theorem shows that the strong finite generation of $\cH(n)^{O(n)}$ is an easy consequence of our conjecture.

\begin{thm} \label{stronggen} Suppose that Conjecture \ref{mainconj} holds. Then for all $r\geq \frac{1}{2}(n^2+3n)$, there exists a decoupling relation \begin{equation}\label{maindecoup} j^{2r} = Q_{2r}(j^0,j^2,\dots,j^{n^2+3n-2}),\end{equation} where $Q_{2r}$ is some normally ordered polynomial in $j^0,j^2,\dots,j^{n^2+3n-2}$, and their derivatives. It follows that $\{j^0,j^2,\dots,j^{n^2+3n-2}\}$ is a minimal strong generating set for $\cH(n)^{O(n)}$. \end{thm}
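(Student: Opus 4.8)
The plan is to produce, for every integer $r\ge \tfrac12 N$ with $N:=n^2+3n$, an explicit element $E_r\in\cI_n$ of weight $2r+2$ whose remainder $R(E_r)$ — defined as in (\ref{defofrij}), by projecting the lowest-degree part of $E_r$ onto $\bra J^{2r}\ket$ — is a nonzero multiple of $J^{2r}$; applying the quotient map $\pi_n$ then converts each $E_r$ into a decoupling relation for $j^{2r}$. The construction is recursive. For the base case $r=\tfrac12 N$ I take $E_{N/2}:=D_0$, the distinguished element of $\cI_n$ of minimal weight $N+2$; Conjecture \ref{mainconj} is exactly the statement $R(D_0)=R_0\neq 0$. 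For the inductive step I set $E_{r+1}:=J^2\circ_1 E_r$. Since $\cI_n$ is a vertex algebra ideal it is closed under circle products, so $E_{r+1}\in\cI_n$, and since $J^2$ has weight $4$ the mode $J^2\circ_1$ raises conformal weight by $2$, giving $\mathrm{wt}(E_{r+1})=2r+4=2(r+1)+2$.

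The crux is that the remainder survives this operation. Here I would use the fact, already extracted in the proof of Lemma \ref{uniquenessofr}, that $J^{2k}\circ_0 J^{2l}$ is always a total derivative: substituting this into the identity (\ref{vaidiii}) with $n=1$ makes the correction term $(J^2\circ_0 b)\circ_0 c$ vanish whenever $b$ is a derivative of a generator, so that $J^2\circ_1$ is a genuine \emph{derivation} of the iterated Wick product on $\cV_n$. Consequently $J^2\circ_1$ preserves the degree filtration, and the lowest-degree part of $E_{r+1}$ is simply $J^2\circ_1(E_r^2)$, where $E_r^2\in A_{2r}$ is the lowest-degree part of $E_r$. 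Writing $E_r^2=\rho_r J^{2r}+\partial^2\eta$ with $\rho_r\neq 0$ and $\eta\in A_{2r-2}$, as permitted by (\ref{decompofa}), I would treat the two pieces separately: by (\ref{specialope}), which holds verbatim for the $J^{2m}$, one has that $J^2\circ_1 J^{2r}$ is a nonzero multiple of $J^{2r+2}$ modulo $\partial^2 A_{2r}$, while the elementary identity $a\circ_1(\partial b)=\partial(a\circ_1 b)+a\circ_0 b$, applied twice together with the total-derivative property of $J^2\circ_0(\,\cdot\,)$, gives $J^2\circ_1(\partial^2\eta)\in\partial^2 A_{2r}$. Hence $J^2\circ_1(E_r^2)$ is a nonzero multiple of $J^{2r+2}$ modulo $\partial^2 A_{2r}$, i.e. $R(E_{r+1})\neq 0$, and the induction closes. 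I expect this propagation of the remainder to be the step requiring the most care, since a priori there is no reason the leading coefficient should not be cancelled; it is precisely the derivation property of $J^2\circ_1$ and its closure on the subspaces $\partial^2 A_{\bullet}$ that rescue it.

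Applying $\pi_n$ to $E_r$ and using $\pi_n(E_r)=0$ together with $R(E_r)\neq 0$ lets me solve for $j^{2r}$: the remaining part of $E_r^2$ lies in $\partial^2 A_{2r-2}$, and the higher-degree parts $\pi_n(E_r^{2k})$, $k\ge 2$, are normally ordered polynomials whose factors all have index $<2r$ by a weight count, so I obtain a relation expressing $j^{2r}$ as a normally ordered polynomial in $j^0,\dots,j^{2r-2}$ and their derivatives. Feeding the lower relations into this one and iterating yields the stated decoupling relation (\ref{maindecoup}), $j^{2r}=Q_{2r}(j^0,\dots,j^{N-2})$. That $\{j^0,\dots,j^{N-2}\}$ then strongly generates $\cH(n)^{O(n)}$ follows by the usual reduction: any normally ordered monomial in the full set $\{j^{2m}\}$ is rewritten by repeatedly substituting (\ref{maindecoup}) for its highest-index generator and re-expanding into normal order via (\ref{vaidi})--(\ref{vaidii}), where every correction term has strictly smaller degree and conformal-weight-bounded index, so the process terminates exactly as in the analogous arguments of \cite{LI} and \cite{LII}. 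This bookkeeping is routine but tedious.

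Minimality is the cleanest point. If some $j^{2m_0}$ with $m_0\le \tfrac12 N-1$ could be written as a normally ordered polynomial in the remaining generators, then lifting this relation along $\pi_n$ would produce a nonzero element of $\cI_n$ of weight $2m_0+2\le N<N+2$. This contradicts the fact that $D_0$ is the unique element of $\cI_n$ of minimal weight $N+2$, equivalently that $\cH(n)^{O(n)}$ has no relations below weight $N+2$. Therefore no generator of weight at most $N$ may be discarded, $\{j^0,\dots,j^{N-2}\}$ is a minimal strong generating set, and $\cH(n)^{O(n)}$ is a $\cW$-algebra of type $\cW(2,4,6,\dots,n^2+3n)$.
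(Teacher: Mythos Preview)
Your proof is correct and follows essentially the same route as the paper: both arguments induct by applying the operator $J^2\circ_1$ and rely on (\ref{specialope}) to see that $J^2\circ_1 J^{2k}\equiv(4k+3)J^{2k+2}$ modulo second derivatives. The only difference is organizational---the paper applies $J^2\circ_1$ to the decoupling element $J^{2r-2}-Q_{2r-2}(J^0,\dots,J^{n^2+3n-2})$ itself and eliminates the stray $J^{n^2+3n}$ at each step via the base relation, whereas you iterate $J^2\circ_1$ directly on $D_0$, track the remainder using your observation that $(J^2\circ_0 b)\circ_0 c$ vanishes (so $J^2\circ_1$ is a derivation on the relevant Wick products), and perform the substitution at the end; your explicit minimality argument is a welcome addition that the paper leaves implicit.
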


\begin{proof} The decoupling relation $j^{n^2+3n} = P(j^0,j^2,\dots,j^{n^2+3n-2})$ given by Conjecture \ref{mainconj} corresponds to an element $J^{n^2+3n} - P(J^0,J^2,\dots,J^{n^2+3n-2})\in \cI_{n}$. We need to show that for all $r\geq \frac{1}{2}(n^2+3n)$, there exists an element $J^{2r} - Q_{2r}(J^0,J^2,\dots,J^{n^2+3n-2})\in \cI_{n}$, so we assume inductively that $Q_{2r-2}$ exists. Choose a decomposition $$Q_{2r-2} = \sum_{k=1}^{d} Q^{2k}_{2r-2},$$ where $Q_{2r-2}^{2k}$ is a homogeneous normally ordered polynomial of degree $k$ in the vertex operators $J^0,\dots,J^{n^2+3n-2}$ and their derivatives. In particular, $$Q^2_{2r-2} = \sum_{i=0}^{\frac{1}{2}(n^2+3n-2)} c_i \partial^{2r-2i-2} J^{2i},$$ for constants $c_i$. We apply the operator $J^2 \circ_1$, which raises the weight by two. By (\ref{specialope}), we have $J^2 \circ_1 J^{2r-2} \equiv (4r-1)J^{2r}$ modulo second derivatives. Moreover, using (\ref{vaidiii}) and (\ref{specialope}), we see that $J^2 \circ_1 \big(\sum_{k=1}^{d} Q^{2k}_{2r-2}\big)$ can be expressed in the form $\sum_{k=1}^{d} E^{2k}$ where each $E^{2k}$ is a normally ordered polynomial in $J^0,\dots,J^{n^2+3n}$ and their derivatives. If $J^{n^2+3n}$ or its derivatives appear in $E^{2k}$, we can use the element $J^{n^2+3n} -P(J^0,\dots, J^{n^2+3n-2})$ in $\cI_{n}$ to eliminate the variable $J^{n^2+3n}$ and any of its derivatives, modulo $\cI_{n}$. Hence $J^2 \circ_1 \big(\sum_{k=1}^{d} Q^{2k}_{2r-2}\big)$ can be expressed modulo $\cI_{n}$ in the form $\sum_{k=1}^{d'} F^{2k}$, where $d'\geq d$, and $F^{2k}$ is a normally ordered polynomial in $J^0,\dots, J^{n^2+3n-2}$ and their derivatives. It follows that $$\frac{1}{4r-1} J^2 \circ_1 \big(J^{2r-2} - Q_{2r-2}(J^0,\dots, J^{n^2+3n-2})\big)$$ can be expressed as an element of $\cI_{n}$ of the desired form. \end{proof}

\section{Representation theory of $\cH(n)^{O(n)}$}
The basic tool in studying the representation theory of vertex algebras is the {\it Zhu functor}, which was introduced by Zhu in \cite{Z}. Given a vertex algebra $\cW$ with weight grading $\cW = \bigoplus_{n\in\mathbb{Z}} \cW_n$, this functor attaches to $\cW$ an associative algebra $A(\cW)$, together with a surjective linear map $\pi_{Zh}:\cW\ra A(\cW)$. For $a\in \cW_{m}$ and $b\in\cW$, define
\begin{equation}\label{defzhu} a*b = Res_z \bigg (a(z) \frac{(z+1)^{m}}{z}b\bigg),\end{equation} and extend $*$ by linearity to a bilinear operation $\cW\otimes \cW\ra \cW$. Let $O(\cW)$ denote the subspace of $\cW$ spanned by elements of the form \begin{equation}\label{zhuideal} a\circ b = Res_z \bigg (a(z) \frac{(z+1)^{m}}{z^2}b\bigg)\end{equation} where $a\in \cW_m$, and let $A(\cW)$ be the quotient $\cW/O(\cW)$, with projection $\pi_{Zh}:\cW\ra A(\cW)$. Then $O(\cW)$ is a two-sided ideal in $\cW$ under the product $*$, and $(A(\cW),*)$ is a unital, associative algebra. The assignment $\cW\mapsto A(\cW)$ is functorial, and if $\cI$ is a vertex algebra ideal of $\cW$, we have $A(\cW/\cI)\cong A(\cW)/ I$, where $I = \pi_{Zh}(\cI)$. A well-known formula asserts that for all $a\in \cW_m$ and $b\in \cW$, \begin{equation}\label{zhucomm} a*b - b*a \equiv Res_z (1+z)^{m -1} a(z) b \ \ \ \  \text{mod} \ O(\cW).\end{equation}

A $\mathbb{Z}_{\geq 0}$-graded module $M = \bigoplus_{n\geq 0} M_n$ over $\cW$ is called {\it admissible} if for every $a\in\cW_m$, $a(n) M_k \subset M_{m+k -n-1}$, for all $n\in\mathbb{Z}$. Given $a\in\cW_m$, the Fourier mode $a(m-1)$ acts on each $M_k$. The subspace $M_0$ is then a module over $A(\cW)$ with action $[a]\mapsto a(m-1) \in End(M_0)$. In fact, $M\mapsto M_0$ provides a one-to-one correspondence between irreducible, admissible $\cW$-modules and irreducible $A(\cW)$-modules. If $A(\cW)$ is a commutative algebra, all its irreducible modules are one-dimensional, and the corresponding $\cW$-modules $M = \bigoplus_{n\geq 0} M_n$ are cyclic and generated by any nonzero $v\in M_0$. Accordingly, we call such a module a {\it highest-weight module} for $\cW$, and we call $v$ a {\it highest-weight vector}.

Let $\cW$ be a vertex algebra which is strongly generated by a set of weight-homogeneous elements $\alpha_i$ of weights $w_i$, for $i$ in some index set $I$. Then $A(\cW)$ is generated by $\{ a_i = \pi_{Zh}(\alpha_i(z))|~i\in I\}$. Moreover, $A(\cW)$ inherits a filtration (but not a grading) by weight.

\begin{thm} \label{abelianzhu} For all $n\geq 1$, $A(\cH(n)^{O(n)})$ is a commutative algebra.\end{thm}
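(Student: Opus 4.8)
The plan is to show the commutator $a * b - b * a$ vanishes in $A(\cH(n)^{O(n)})$ for all pairs of generators, using the generating set $\{j^0, j^2\}$ guaranteed by Lemma \ref{ordfingen} together with the filtration by weight that $A(\cW)$ inherits. By the formula (\ref{zhucomm}), for weight-homogeneous $a \in \cW_m$ and $b \in \cW$ we have $a * b - b * a \equiv Res_z (1+z)^{m-1} a(z) b$ modulo $O(\cW)$, and the right-hand side is a finite sum of terms $\binom{m-1}{\ell} (a \circ_{\ell} b)$ with $\ell \geq 0$. Thus the Zhu commutator of two generators is controlled entirely by the nonnegative circle products, i.e.\ by the OPE, which we have computed explicitly in (\ref{specialope}) and more generally (\ref{opeformula}).

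First I would reduce the problem to the generators. Since $A(\cH(n)^{O(n)})$ is generated by $a_0 = \pi_{Zh}(j^0)$ and $a_2 = \pi_{Zh}(j^2)$ (because $\{j^0, j^2\}$ generates the whole vertex algebra by Lemma \ref{ordfingen}, and $\pi_{Zh}$ intertwines the vertex-algebra structure with the associative structure), it suffices to prove that $a_0$ and $a_2$ commute in $A$. By (\ref{zhucomm}), $a_2 * a_0 - a_0 * a_2 \equiv Res_z (1+z)^{m-1} j^2(z) j^0 \pmod{O(\cW)}$ where $m = \text{wt}(j^2) = 4$, so this is a sum of the nonnegative products $j^2 \circ_{\ell} j^0$ for $\ell = 0, 1, 2, 3$. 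Now $j^0 = \omega_{0,0} = 2L$, so each $j^2 \circ_{\ell} j^0$ is itself a linear combination of the $\omega_{a,b}$ lying in a single space $A_k$, by (\ref{opeformula}). The key point is that each such term is, up to a total derivative, a scalar multiple of a generator $j^{2k}$ (using the decompositions (\ref{deca}), (\ref{decompofa})), and total derivatives are well understood in $O(\cW)$.

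The heart of the argument is therefore a structural lemma about how derivatives behave under $\pi_{Zh}$: for any homogeneous $c \in \cW$, the image $\pi_{Zh}(\partial c)$ is a specific multiple of $\pi_{Zh}(c)$ in $A(\cW)$ (indeed $\pi_{Zh}(\partial c) = -\text{wt}(c)\, \pi_{Zh}(c)$ is the standard fact, since $L(0)c + \partial c \in O(\cW)$). Using this I can systematically rewrite each $j^2 \circ_{\ell} j^0$ modulo $O(\cW)$ so that the whole commutator $a_2 * a_0 - a_0 * a_2$ collapses. The cleanest route is to invoke the symmetry of the OPE: because $j^0$ and $j^2$ are both sums of diagonal quadratics in the $\alpha^i$, the products $j^0 \circ_{\ell} j^2$ and $j^2 \circ_{\ell} j^0$ are related by the skew-symmetry identity (\ref{vaidii}) and the quasi-symmetry of circle products, and the discrepancy is always a total derivative. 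Feeding these total derivatives through the relation $\pi_{Zh}(\partial c) = -\text{wt}(c)\pi_{Zh}(c)$ shows the two orderings agree in $A$.

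The main obstacle I anticipate is bookkeeping rather than conceptual: one must verify that after projecting to $A(\cW)$ every term genuinely cancels, and this requires tracking the precise scalars $\lambda_{a,b,c,m}$ from (\ref{opeformula}) together with the weight factors introduced by $\pi_{Zh}(\partial c) = -\text{wt}(c)\pi_{Zh}(c)$. A subtle point is that commutativity of $A$ on the two algebra generators $a_0, a_2$ does \emph{not} immediately give commutativity on all of $A$ unless one knows $\pi_{Zh}$ respects products well enough; so I would either argue directly that the subalgebra generated by two commuting elements is commutative (which is automatic once $[a_0, a_2] = 0$), or, more robustly, prove the commutator vanishes at the level of the associated graded. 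Indeed, since $gr(A(\cW))$ is a quotient of the commutative Zhu \emph{Poisson} algebra $\cW/C_2(\cW)$ and the relevant Poisson bracket is computed from the same nonnegative circle products, showing these brackets vanish on generators forces $A(\cW)$ itself to be commutative. This graded-to-filtered argument is the safest way to conclude, and I expect it to be where the care is needed.
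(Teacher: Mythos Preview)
Your reduction to the pair $\{j^0, j^2\}$ does not work. Lemma \ref{ordfingen} only says that $\{j^0, j^2\}$ \emph{generates} $\cH(n)^{O(n)}$ as a vertex algebra, using all circle products $\circ_n$, not that it \emph{strongly} generates it. The passage from a vertex algebra to its Zhu algebra turns strong generators into algebra generators; mere weak generators need not become algebra generators, because $\pi_{Zh}(a\circ_k b)$ for $k\geq 0$ is not in general expressible in terms of $\pi_{Zh}(a)$, $\pi_{Zh}(b)$, and the Zhu product alone. In fact, immediately after the theorem the paper notes that $A(\cV_n)\cong\mathbb{C}[A^0,A^2,\dots]$ is a polynomial ring on \emph{all} the $A^{2m}$, and that $A(\cH(n)^{O(n)})$ is its quotient; even assuming the main conjecture one only gets generators $a^0,\dots,a^{n^2+3n-2}$, not just $a^0,a^2$. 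So you must check $[a^{2l},a^{2m}]=0$ for all $l,m$, not just for $l=0$, $m=1$.

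Your fallback via the associated graded does not close the gap either: commutativity of $gr(A)$ (equivalently, vanishing of the induced Poisson bracket) only says that the commutator of two elements has strictly lower filtration degree than expected, not that it is zero; think of $U(\gg)$ versus $Sym(\gg)$. There is no downward induction available here since the weight filtration is unbounded.

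The paper's proof avoids all of this by a uniformity argument: it works with the full set of strong generators $\{j^{2m}\}$ and observes that, except for the scalar $j^{2l}\circ_{2l+2m+3}j^{2m}$, the OPE relations among the $j^{2m}$ are literally independent of $n$. Since the Zhu commutator $[a^{2l},a^{2m}]$ is computed from $j^{2l}\circ_k j^{2m}$ only for $0\leq k\leq 2l+2$ (which never reaches $2l+2m+3$), that commutator is the same element for every $n$, and hence vanishes because it already vanishes in the case $n=1$, which is the Dong--Nagatomo result.
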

\begin{proof} In the case $n=1$, the Zhu algebra $A(\cH^+)$ is clearly abelian since it has two generators, one of which is central since it corresponds to the Virasoro element \cite{DNI}. For all $n\geq 1$, the circle products $j^{2l}\circ_k j^{2m}$ among the generators of $\cH(n)^{O(n)}$ are independent of $n$ for $0\leq k<2l+2m+3$. For $k = 2l+2m+3$, $j^{2l}\circ_k j^{2m}$ is a constant (which depends on $n$), and $j^{2l}\circ_k j^{2m} = 0$ for $k> 2l+2m+3$. Let $a^{2m}= \pi_{Zh}(j^{2m})$. It is clear from (\ref{zhucomm}) that the commutator $[a^{2l},a^{2m}]$ depends only on $j^{2l}\circ_k j^{2m}$ for $0\leq k\leq 2l+2$, and therefore is not affected by the value of this constant. So this commutator is the same for all $n$, and in particular must vanish since it vanishes for $n=1$. Note that this argument is independent of Conjecture \ref{mainconj}. \end{proof}

\begin{cor}All irreducible, admissible $\cH(n)^{O(n)}$-modules are highest-weight modules.\end{cor}

Since $\cV_n$ has the same generators and OPE relations as $\cH(n)^{O(n)}$, the same argument shows that the Zhu algebra of $\cV_n$ is abelian. Since $\cV_n$ is freely generated by $J^0,J^2,\dots$, it follows that $A(\cV_n)$ is the polynomial algebra $\mathbb{C}[A^0,A^2,\cdots]$, where $A^{2m} = \pi_{Zh}(J^{2m})$. Moreover, $A(\cH(n)^{O(n)}) \cong  \mathbb{C}[a^0,a^2,\dots] / I_n$, where $I_n = \pi_{Zh}(\cI_n)$, and we have a commutative diagram

\begin{equation}\label{commdiag} \begin{array}[c]{ccc}
\cV_n &\stackrel{\pi_n}{\rightarrow}& \cH(n)^{O(n)}  \\
\downarrow\scriptstyle{\pi_{Zh}}&&\downarrow\scriptstyle{\pi_{Zh}}\\
A(\cV_n) &\stackrel{A(\pi_n)}{\rightarrow}& A(\cH(n)^{O(n)})
\end{array} .\end{equation}

For $n=1$, it was shown by Dong-Nagatomo \cite{DNI} that $A(\cH^+) \cong \mathbb{C}[x,y] / I$, where $I$ is the ideal generated by the polynomials $$P=(y+x-4x^2)(70y+908x^2-515x+27),\ \ \ \ \ \ Q = (x-1)(x-\frac{1}{16})(x-\frac{9}{16})(y+x-4x^2).$$It follows that the irreducible, admissible $\cH^+$-modules are parametrized by the points on the variety $V(I)\subset \mathbb{C}^2$. Suppose now that Conjecture \ref{mainconj} holds. Then $A(\cH(n)^{O(n)})$ is generated by $\{a^0, a^2,\dots,a^{n^2+3n-2}\}$, and it follows that $$A(\cH(n)^{O(n)}) \cong \mathbb{C}[a^0,a^2,\dots, a^{n^2+3n-2}]/ I_{n},$$ where $I_{n}$ is now regarded as an ideal inside $\mathbb{C}[a^0,a^2\dots, a^{n^2+3n-2}]$. The corresponding variety $V(I_{n})\subset \mathbb{C}^{\frac{1}{2}(n^2+3n)}$ then parametrizes the irreducible, admissible modules over $\cH(n)^{O(n)}$. The problem of classifying these modules is equivalent to giving a description of the ideal $I_n$.

\section{Invariant subalgebras of $\cH(n)$ under arbitrary reductive groups}\label{secgeneral}
In this section, we study $\cH(n)^G$ for a general reductive group $G\subset O(n)$. Our approach is similar to our study of invariant subalgebras of ghost systems in \cite{LII}. By a fundamental result of Dong-Li-Mason \cite{DLM}, $\cH(n)$ has a decomposition of the form \begin{equation}\label{dlmdecomp} \cH(n) \cong \bigoplus_{\nu\in H} L(\nu)\otimes M^{\nu},\end{equation} where $H$ indexes the irreducible, finite-dimensional representations $L(\nu)$ of $O(n)$, and the $M^{\nu}$'s are inequivalent, irreducible, highest-weight $\cH(n)^{O(n)}$-modules. The modules $M^{\nu}$ appearing in (\ref{dlmdecomp}) have an integrality property; the eigenvalues of $\{j^{2m}(2m+1)|~m\geq 0\}$ on the highest-weight vectors $f_{\nu}$ are all integers. These modules therefore correspond to certain rational points on the variety $V(I_n)$, if it exists.  

Using the decomposition (\ref{dlmdecomp}), together with a classical theorem of Weyl, we show that $\cH(n)^G$ is finitely generated as a vertex algebra. This statement is analogous to Lemma 2 of \cite{LII}, and is independent of Conjecture \ref{mainconj}. We then prove some combinatorial properties of the modules $M^{\nu}$ appearing in (\ref{dlmdecomp}), which are also independent of Conjecture \ref{mainconj}. Next, we show that Conjecture \ref{mainconj} implies that each module $M^{\nu}$ appearing in (\ref{dlmdecomp}) possesses a certain finiteness property. Together with the finite generation of $\cH(n)^G$, this is enough to prove that $\cH(n)^G$ is strongly finitely generated. Since Conjecture \ref{mainconj} holds for $n=2$ and $n=3$, the strong finite generation of $\cH(2)^G$ and $H(3)^G$ is an immediate consequence.

\begin{thm} \label{ordfg} For any reductive $G\subset O(n)$, $\cH(n)^G$ is finitely generated as a vertex algebra.\end{thm}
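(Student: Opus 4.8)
The plan is to reduce the finite generation of $\cH(n)^G$ to the finite generation of $\cH(n)^{O(n)}$ (Lemma \ref{ordfingen}) by exploiting the decomposition (\ref{dlmdecomp}) together with the fact that $\cH(n)^G$ is a finitely generated module over $\cH(n)^{O(n)}$ in a suitable sense. First I would observe that, since $G\subset O(n)$ is reductive, the decomposition $\cH(n)\cong \bigoplus_{\nu\in H} L(\nu)\otimes M^{\nu}$ restricts to
$$\cH(n)^G \cong \bigoplus_{\nu\in H} (L(\nu))^G\otimes M^{\nu},$$
so that $\cH(n)^G$ is a direct sum of those irreducible $\cH(n)^{O(n)}$-modules $M^{\nu}$ for which the multiplicity space $(L(\nu))^G$ is nonzero, each appearing $\dim (L(\nu))^G$ times. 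In particular $\cH(n)^G$ is a module over the vertex algebra $\cH(n)^{O(n)}$.

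The key input is a classical theorem of Weyl on the joint invariants of $O(n)$ and an arbitrary reductive subgroup $G$: the algebra $(Sym \bigoplus_{k\geq 0} V_k)^G$ is generated as a module over $(Sym \bigoplus_{k\geq 0} V_k)^{O(n)}$ by a finite set of $O(n)$-isotypic components, or equivalently, only finitely many representations $L(\nu)$ occur which are needed to generate the full invariant ring. Concretely, I would use this to select a finite set $\nu_1,\dots,\nu_t\in H$ and finite-dimensional $O(n)$-submodules of $gr(\cH(n))$ lying in the corresponding isotypic components, whose $G$-invariants, together with $(gr(\cH(n)))^{O(n)}$, generate $(gr(\cH(n)))^G$ as a $\partial$-ring. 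Lifting these generators to vertex operators via the projections $\phi_r$ and invoking the reconstruction Lemma \ref{recon}, I obtain a strong (hence ordinary) generating set for $\cH(n)^G$ consisting of the strong generators $j^0, j^2$ of $\cH(n)^{O(n)}$ (using Lemma \ref{ordfingen}) together with finitely many additional vertex operators spanning the relevant multiplicity spaces $(L(\nu_i))^G$. Since each such multiplicity space is finite-dimensional, this is a finite generating set.

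The main obstacle is establishing the finite-module statement at the level of the $O(n)$-module structure: one must show that only finitely many isotypic components $L(\nu)$ are required to generate $(Sym \bigoplus_{k\geq 0} V_k)^G$ over the subring $(Sym \bigoplus_{k\geq 0} V_k)^{O(n)}$. This is the content of the cited theorem of Weyl, which identifies the $O(n)$-equivariant structure of the polynomial ring and guarantees that the invariants transform in finitely many $G$-types generating the rest; the subtlety is that infinitely many copies of each $V_k$ are present, so the argument must produce generators of bounded degree, after which the $\partial$-ring structure accounts for all derivatives. Once finitely many generating isotypic components are fixed and lifted, the reconstruction lemma transfers finite generation from the associated graded level to $\cH(n)^G$ itself. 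I would note that this argument is entirely independent of Conjecture \ref{mainconj}, as it uses only the ordinary (not strong) finite generation of $\cH(n)^{O(n)}$ afforded by Lemma \ref{ordfingen}.
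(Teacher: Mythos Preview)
Your overall strategy matches the paper's: show that $\cH(n)^G$ is generated over $\cH(n)^{O(n)}$ by finitely many additional elements coming from finitely many irreducible $\cH(n)^{O(n)}$-modules $M^{\nu}$, then invoke Lemma \ref{ordfingen}. However, the step you flag as ``the main obstacle'' is not a ready-made theorem of Weyl in the form you state it; the paper fills this gap with a more concrete classical input. One introduces the action of $GL_{\infty} = \lim_{p} GL_p$ on $\bigoplus_{k\geq 0} V_k$, which commutes with both $G$ and $O(n)$. Weyl's theorem then says that $R = (Sym \bigoplus_{k\geq 0} V_k)^G$ is generated by the $GL_{\infty}$-translates of any generating set for $(Sym \bigoplus_{k=0}^{n-1} V_k)^G$; the latter ring is finitely generated since $G$ is reductive and only finitely many $V_k$ are involved. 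Because $GL_{\infty}$ commutes with $O(n)$, each translate $\sigma f_i$ remains in the same $O(n)$-isotypic component as $f_i$, so only finitely many $L(\nu_j)\otimes M^{\nu_j}$ occur --- this is how the ``finitely many isotypic components'' statement you want is actually established. Irreducibility of each $M^{\nu_j}$ over $\cH(n)^{O(n)}$ then shows that $\cH(n)^G$ is generated as a vertex algebra over $\cH(n)^{O(n)}$ by the finitely many $f_i(z)$.

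Two smaller points. First, $\{j^0, j^2\}$ is an \emph{ordinary}, not strong, generating set for $\cH(n)^{O(n)}$ (that is precisely the content of Lemma \ref{ordfingen}), and that is all that is needed here. Second, the finite set you obtain at the end is an ordinary generating set, not a strong one: Lemma \ref{recon} yields an \emph{infinite} strong generating set $\{(\sigma f_i)(z)\mid \sigma\in GL_{\infty}\}$, and finiteness enters only after replacing each full $GL_{\infty}$-orbit by the single $f_i(z)$ together with the $\cH(n)^{O(n)}$-action, which uses nonnegative circle products and hence does not preserve strong generation.
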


\begin{proof} Recall that $\cH(n)\cong gr(\cH(n))$ as linear spaces, and $$gr(\cH(n)^G )\cong (gr(\cH(n))^G \cong (Sym \bigoplus_{k\geq 0} V_k)^G = R$$ as commutative algebras, where $V_k\cong \mathbb{C}^n$ as $O(n)$-modules. For all $p\geq 0$, there is an action of $GL_p$ on $\bigoplus_{k =0}^{p-1} V_k $ which commutes with the action of $G$. The natural inclusions $GL_p\hookrightarrow GL_q$ for $p<q$ sending $$M \ra  \bigg[ \begin{matrix} M & 0 \cr 0 & I_{q-p} \end{matrix} \bigg]$$ induce an action of $GL_{\infty} = \lim_{p\ra \infty} GL_p$ on $\bigoplus_{k\geq 0} V_k$. We obtain an action of $GL_{\infty}$ on $Sym \bigoplus_{k\geq 0} V_k$ by algebra automorphisms, which commutes with the action of $G$. Hence $GL_{\infty}$ acts on $R$ as well. By a basic theorem of Weyl, $R$ is generated by the set of translates under $GL_{\infty}$ of any set of generators for $(Sym \bigoplus_{k = 0} ^{n-1} V_k)^G$ \cite{W}. Since $G$ is reductive, $(Sym \bigoplus_{k = 0} ^{n-1} V_k)^G$ is finitely generated. Hence there exists a finite set of homogeneous elements $\{f_1,\dots, f_k\}\subset R$ such that $\{ \sigma f_i|~ i=1,\dots,k,~ \sigma\in GL_{\infty}\}$ generates $R$. It follows from Lemma \ref{recon} that the set of vertex operators $$\{(\sigma f_i)(z)\in \cH(n)^G|~i=1,\dots,k,~ \sigma\in GL_{\infty}\}$$ which correspond to $\sigma f_i$ under the linear isomorphism $\cH(n)^G\cong gr(\cH(n)^G) \cong R$, is a set of strong generators for $\cH(n)^G$.

In the decomposition (\ref{dlmdecomp}) of $\cH(n)$ as a bimodule over $O(n)$ and $\cH(n)^{O(n)}$, the $O(n)$-isotypic component of $\cH(n)$ of type $L(\nu)$ is isomorphic to $L(\nu)\otimes M^{\nu}$. Each $L(\nu)$ is a module over $G\subset O(n)$, and since $G$ is reductive, it has a decomposition $L(\nu) =\oplus_{\mu\in H^{\nu}} L(\nu)_{\mu}$. Here $\mu$ runs over a finite set $H^{\nu}$ of irreducible, finite-dimensional representations $L(\nu)_{\mu}$ of $G$, possibly with multiplicity. We thus obtain a refinement of (\ref{dlmdecomp}):
\begin{equation}\label{decompref} \cH(n) \cong \bigoplus_{\nu\in H} \bigoplus_{\mu\in H^{\nu}} L(\nu)_{\mu} \otimes M^{\nu}.\end{equation}
Let $f_1(z),\dots,f_k(z)\in \cH(n)^G$ be the vertex operators corresponding to the polynomials $f_1, \dots,f_k$ under the linear isomorphism $\cH(n)^G\cong gr(\cH(n)^G) \cong R$. Clearly $f_1(z),\dots, f_k(z)$ must live in a finite direct sum
\begin{equation}\label{newsummation} \bigoplus_{j=1}^r L(\nu_j)\otimes M^{\nu_j}\end{equation} of the modules appearing in (\ref{dlmdecomp}). By enlarging the collection $f_1(z),\dots,f_k(z)$ if necessary, we may assume without loss of generality that each $f_i(z)$ lives in a single representation of the form $L(\nu_j)\otimes M^{\nu_j}$. Moreover, we may assume that $f_i(z)$ lives in a trivial $G$-submodule $L(\nu_j)_{\mu_0} \otimes M^{\nu_j}$, where $\mu_0$ denotes the trivial, one-dimensional $G$-module. (In particular, $L(\nu_j)_{\mu_0}$ is one-dimensional). Since the actions of $GL_{\infty}$ and $O(n)$ on $\cH(n)$ commute, we may assume that $(\sigma f_i)(z)\in L(\nu_j)_{\mu_0}\otimes M^{\nu_j}$ for all $\sigma\in GL_{\infty}$. Since $\cH(n)^G$ is strongly generated by the set $\{ (\sigma f_i)(z)|~i=1,\dots,k,~ \sigma\in GL_{\infty}\}$, and each $M^{\nu_j}$ is an irreducible $\cH(n)^{O(n)}$-module, $\cH(n)^G$ is generated as an algebra over $\cH(n)^{O(n)}$ by $f_1(z),\dots,f_k(z)$. Finally, since $\cH(n)^{O(n)}$ is itself a finitely generated vertex algebra by Lemma \ref{ordfingen}, we conclude that $\cH(n)^G$ is finitely generated. \end{proof}

Next, we need a fact about representations of associative algebras which can be found in \cite{LII}. Let $A$ be an associative $\mathbb{C}$-algebra (not necessarily unital), and let $W$ be a linear representation of $A$, via an algebra homomorphism $\rho: A\ra End(W)$. Regarding $A$ as a Lie algebra with commutator as bracket, let $\rho_{Lie}:A\ra End(W)$ denote the map $\rho$, regarded now as a Lie algebra homomorphism. There is an induced algebra homomorphism $U(A)\ra End(W)$, where $U(A)$ denotes the universal enveloping algebra of $A$. Given elements $a,b\in A$, we denote the product in $U(A)$ by $a*b$ to distinguish it from $ab\in A$. Given a monomial $\mu = a_1* \cdots * a_r\in U(A)$, let $\tilde{\mu} = a_1\cdots a_r$ be the corresponding element of $A$. Let $U(A)_+$ denote the augmentation ideal (i. e., the ideal generated by $A$), regarded as an associative algebra with no unit. The map $U(A)_+ \ra A$ sending $\mu\mapsto \tilde{\mu}$ is then an algebra homomorphism which makes the diagram \begin{equation}\label{commutativediag} \begin{matrix} U(A)_+ & & \cr \downarrow & \searrow \cr A & \ra & End(W) \cr \end{matrix} \end{equation} commute.
Let $Sym(W)$ denote the symmetric algebra of $W$, whose $d$th graded component is denoted by $Sym^d(W)$. Clearly $\rho_{Lie}$ (but not $\rho$) can be extended to a Lie algebra homomorphism $\hat{\rho}_{Lie}: A\ra End(Sym(W))$, where $\hat{\rho}_{Lie}(a)$ acts by derivation on each $Sym^d(W)$: $$\hat{\rho}_{Lie}(a)( w_1\cdots w_d) = \sum_{i=1}^d w_1 \cdots  \hat{\rho}_{Lie}(a)(w_i)  \cdots  w_d.$$ This extends to an algebra homomorphism $U(A)\ra End(Sym(W))$ which we also denote by $\hat{\rho}_{Lie}$, but there is no commutative diagram like (\ref{commutativediag}) because the map $A\ra End(Sym(W))$ is not a map of associative algebras. In particular, the restrictions of $\hat{\rho}_{Lie}(\mu)$ and $\hat{\rho}_{Lie}(\tilde{\mu})$ to $Sym^d(W)$ are generally not the same for $d>1$. The following result appears in \cite{LII}.

\begin{lemma} \label{first} Given $\mu \in U(A)$ and $d\geq 1$, define a linear map $\Phi^d_{\mu} \in End(Sym^d(W))$ by \begin{equation}\label{mapmu} \Phi^d_{\mu}  = \hat{\rho}_{Lie}(\mu) \big|_{Sym^d(W)}.\end{equation} Let $E$ denote the subspace of $End(Sym^d(W))$ spanned by $\{\Phi^d_{\mu}|~\mu\in U(A)\}$. Note that $E$ has a filtration $$E_1\subset E_2\subset \cdots,\ \ \ \ \ \ E = \bigcup_{r\geq 1} E_r,$$ where $E_r$ is spanned by $\{\Phi^d_{\mu}|~ \mu \in U(A),~ deg(\mu) \leq r\}$. Then $E = E_d$.\end{lemma}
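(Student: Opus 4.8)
The plan is to show that any $\hat{\rho}_{Lie}(\mu)$ with $\deg(\mu) = r > d$, acting on $Sym^d(W)$, can be rewritten as a linear combination of $\Phi^d_{\nu}$ with $\deg(\nu) \leq d$. The key observation is that a derivation-type operator acting on a product of only $d$ factors cannot ``see'' more than $d$ of the letters at a time, so the full length of the word $\mu$ is irrelevant once it exceeds $d$. I would make this precise by examining how $\hat{\rho}_{Lie}(\mu)$ distributes over a monomial $w_1 \cdots w_d \in Sym^d(W)$.

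First I would recall the explicit action. For $\mu = a_1 * \cdots * a_r \in U(A)$, the operator $\hat{\rho}_{Lie}(\mu)$ is the composite of the derivations $\hat{\rho}_{Lie}(a_j)$, and iterating the Leibniz rule shows that $\hat{\rho}_{Lie}(\mu)(w_1 \cdots w_d)$ is a sum over all ways of distributing the $r$ letters $a_1, \dots, a_r$ among the $d$ slots $w_1, \dots, w_d$, where the letters assigned to a single slot $w_i$ act on it (as elements of $A$, in the associative product) in their induced order. Crucially, in each term of this sum the collection of letters hitting a fixed slot is applied as an honest product in $A$, so a block of letters landing on one slot collapses to a single element of $A$ acting by $\rho_{Lie}$. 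The point is that in any term of the expansion, the $r$ letters are partitioned into at most $d$ nonempty blocks (the empty slots simply receive the identity), and each nonempty block, being a product in $A$, is itself a single element of $A$.

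Therefore every term in the expansion of $\Phi^d_{\mu}$ coincides with the corresponding term in the expansion of $\Phi^d_{\nu}$ for some word $\nu = b_1 * \cdots * b_s$ of length $s \leq d$, where the $b_k$ are precisely the collapsed block-products and $s$ is the number of nonempty blocks. Running over all distributions, I would collect terms: the full operator $\Phi^d_{\mu}$ becomes a finite sum of operators $\Phi^d_{\nu}$ with $\deg(\nu) \leq d$, i.e. $\Phi^d_{\mu} \in E_d$. Since $\mu$ was arbitrary, $E = E_d$, which is the claim.

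The main obstacle is purely bookkeeping rather than conceptual: one must track the induced \emph{ordering} of letters within each block so that the collapsed element of $A$ is the correct product, and verify that the multinomial combinatorics of the distribution match up so that the rewriting is an exact identity of operators on $Sym^d(W)$ and not merely on individual monomials. I would handle this by fixing a monomial $w_1 \cdots w_d$, writing out the Leibniz expansion explicitly as a sum indexed by functions $\{1,\dots,r\} \to \{1,\dots,d\}$, grouping the index set by the resulting ordered partition into blocks, and observing that each grouped contribution is exactly an evaluation of some $\Phi^d_{\nu}$ with $\deg(\nu) \leq d$. Because this holds monomial-by-monomial with coefficients independent of the chosen monomial, it extends to an operator identity on all of $Sym^d(W)$, completing the argument.
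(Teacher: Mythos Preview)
Your setup is correct: expanding $\hat{\rho}_{Lie}(\mu)(w_1\cdots w_d)$ by iterated Leibniz does give a sum over functions $f:\{1,\dots,r\}\to\{1,\dots,d\}$, and grouping by the fiber partition $\phi$ of $\{1,\dots,r\}$ into $p\le d$ nonempty blocks produces exactly the operators the paper calls $g_\phi$, with the block products $\tilde m_1,\dots,\tilde m_p\in A$ acting on distinct slots. The gap is in the next sentence, where you assert that ``each grouped contribution is exactly an evaluation of some $\Phi^d_{\nu}$ with $\deg(\nu)\le d$.'' That is not true. If you set $\nu=\tilde m_1*\cdots*\tilde m_p$, then $\Phi^d_{\nu}$ is again a Leibniz expansion, now over all functions $\{1,\dots,p\}\to\{1,\dots,d\}$, including the non-injective ones. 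Your grouped contribution $g_\phi$ corresponds only to the injective assignments, so $g_\phi$ equals $\Phi^d_{\nu}$ \emph{minus} further-collapsed terms coming from coarser partitions of $\{1,\dots,p\}$. Thus knowing that each term of $\Phi^d_\mu$ appears somewhere inside some $\Phi^d_\nu$ does not let you write $\Phi^d_\mu$ as a linear combination of the $\Phi^d_\nu$'s; the $\Phi^d_\nu$'s carry extra terms you have not accounted for.

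This is precisely the content the paper supplies and you omit. The paper proves by induction on $p$ that each $g_\phi\in E_d$: for $p=1$ this is immediate, and for general $p$ one has $\Phi^d_{m_\phi}=g_\phi+\sum_{q<p}\sum_{\psi\in Part^p_q} g_\psi$, so $g_\phi\equiv\Phi^d_{m_\phi}\pmod{E_{p-1}}$ and the inductive hypothesis finishes it. Only then does the identity $\Phi^d_\mu=\sum_{p=1}^d\sum_{\phi\in Part^r_p} g_\phi$ yield $\Phi^d_\mu\in E_d$. Your ``bookkeeping'' paragraph does not address this issue; the obstacle is not just tracking orderings and multinomial coefficients, but the fact that the map from partitions to $\Phi^d_\nu$'s is upper-triangular rather than diagonal. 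You need either this induction or an explicit M\"obius inversion on the partition lattice to invert it.
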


\begin{proof} Given a monomial $\mu = a_1* \cdots * a_r \in U(A)$ of arbitrary degree $r>d$, we need to show that $\Phi^d_{\mu}$ can be expressed as a linear combination of elements of the form $\Phi^d_{\nu}$ where $\nu \in U(A)$ and $deg (\nu) \leq d$. Fix $p\leq d$, and let $Part^r_{p}$ denote the set of partitions $\phi$ of $\{1,\dots,r\}$ into $p$ disjoint, non-empty subsets $S^{\phi}_1,\dots, S^{\phi}_p$ whose union is $\{1,\dots,r\}$. Each subset $S^{\phi}_i$ is of the form $$S^{\phi}_i = \{i_1, \dots, i_{k_i} \},\ \ \ \ \ \  i_1<\cdots < i_{k_i}.$$ For $i=1,\dots, p$, let $m_i\in U(A)$ be the corresponding monomial $m_i = a_{i_1} *\cdots *a_{i_{k_i}}$. Let $J = (j_1,\dots,j_p)$ be an (ordered) subset of $\{1,\dots,d\}$. Define a linear map $g_{\phi}\in End(Sym^d(W))$ by \begin{equation}\label{fphi} g_{\phi}(w_1 \cdots  w_d) =  \sum_J g_{\phi}^1(w_1)  \cdots g_{\phi}^d (w_d),\ \ \ \ \ \  g_{\phi}^k (w_k) =  \bigg\{ \begin{matrix} \hat{\rho}_{Lie}(m_i) (w_{j_i}) & k= j_i \cr & \cr w_k & k\neq j_i \end{matrix},\end{equation}
where the sum runs over all (ordered) $p$-element subsets $J$ as above. Note that we could replace $m_i\in U(A)$ with $\tilde{m_i}\in A$ in (\ref{fphi}), since $\hat{\rho}_{Lie}(m_i)(w_{j_i}) = \hat{\rho}_{Lie}(\tilde{m}_i)(w_{j_i})$. 

We claim that for each $\phi\in Part^r_p$, $g_{\phi} \in E_d$. We proceed by induction on $p$. The case $p=1$ is trivial because $g_{\phi} = \hat{\rho}_{Lie}(a)$ as derivations on $Sym^d(W)$, where $a = a_1\cdots a_r$. Next, assume the result for all partitions $\psi\in Part^s_{q}$, for $q<p$ and $s\leq r$. Let $m_1,\dots,m_p\in U(A)$ be the monomials corresponding to $\phi$ as above, and define $m_{\phi} = \tilde{m}_1*\cdots * \tilde{m}_p \in U(A)$. By definition, $\Phi^d_{m_{\phi}} \in E_p\subset E_d$, and the leading term of $\Phi^d_{m_{\phi}}$ is $g_{\phi}$. The lower order terms are of the form $g_{\psi}$, where $\psi\in Part^p_q$ is a partition of $\{1,\dots, p\}$ into $q$ subsets, which each corresponds to a monomial in the variables $\tilde{m}_1,\dots,\tilde{m}_p$.
By induction, each of these terms lies in $E_q$, and since $g_{\phi} \equiv \Phi^d_{m_{\phi}}$ modulo $E_q$, the claim is proved.

Finally, using the derivation property of $A$ acting on $Sym^d(W)$, one checks easily that \begin{equation}\label{triplesum} \Phi^d_{\mu} = \sum_{p=1}^d \sum_{\phi\in Part^r_p} g_{\phi}.\end{equation} Since each $g_{\phi}$ lies in $E_d$ by the above claim, this completes the proof of the lemma. \end{proof}

\begin{cor} \label{firstcor} Let $f\in Sym^d(W)$, and let $M\subset Sym^d(W)$ be the cyclic $U(A)$-module generated by $f$. Then $\{\hat{\rho}_{Lie}(\mu)(f)|~ \mu\in U(A),~ deg(\mu)\leq d\}$ spans $M$.\end{cor}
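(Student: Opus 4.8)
The plan is to deduce the corollary directly from Lemma \ref{first}. The key observation is that the cyclic $U(A)$-module $M$ generated by $f$ is, by definition, the linear span of $\{\hat{\rho}_{Lie}(\mu)(f)|~\mu\in U(A)\}$, since $\hat{\rho}_{Lie}: U(A)\ra End(Sym(W))$ is an algebra homomorphism and $M$ is precisely the image of $f$ under the action of all of $U(A)$. First I would restrict attention to the single graded piece $Sym^d(W)$ containing $f$. Because each $\hat{\rho}_{Lie}(\mu)$ acts on $Sym(W)$ by an operator that preserves the grading (as it is built from derivations, which are degree-preserving on each $Sym^d(W)$), the cyclic module $M$ lies entirely inside $Sym^d(W)$, and the action of $\mu$ on $f$ is exactly $\Phi^d_{\mu}(f)$ in the notation of Lemma \ref{first}.

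The main step is then to invoke the equality $E = E_d$ from Lemma \ref{first}. This says that every operator $\Phi^d_{\mu}$, for arbitrary $\mu\in U(A)$, can be written as a linear combination of operators $\Phi^d_{\nu}$ with $deg(\nu)\leq d$. Applying such an identity of operators to the fixed vector $f$, we obtain that $\hat{\rho}_{Lie}(\mu)(f) = \Phi^d_{\mu}(f)$ is a linear combination of the vectors $\Phi^d_{\nu}(f) = \hat{\rho}_{Lie}(\nu)(f)$ with $deg(\nu)\leq d$. Since the vectors $\hat{\rho}_{Lie}(\mu)(f)$ for $\mu$ ranging over all of $U(A)$ span $M$, and each such generator is expressible in terms of the bounded-degree vectors, I conclude that $\{\hat{\rho}_{Lie}(\mu)(f)|~\mu\in U(A),~deg(\mu)\leq d\}$ already spans $M$.

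There is essentially no obstacle here beyond bookkeeping, since the corollary is a nearly immediate consequence of the operator-level statement in Lemma \ref{first}: the lemma controls the span of the operators $\Phi^d_{\mu}$ uniformly, and one simply evaluates at $f$. The only point requiring a word of care is the passage from the full symmetric algebra to the single homogeneous component $Sym^d(W)$, namely checking that the cyclic module $M$ generated by $f\in Sym^d(W)$ does not leave $Sym^d(W)$; this follows because $\hat{\rho}_{Lie}(a)$ acts as a derivation and hence preserves the homogeneous degree, so $\hat{\rho}_{Lie}(\mu)$ preserves $Sym^d(W)$ for every monomial $\mu$, and therefore for every $\mu\in U(A)$ by linearity.
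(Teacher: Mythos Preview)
Your proposal is correct and matches the paper's approach: the corollary is stated without proof in the paper, as it is an immediate consequence of the equality $E=E_d$ in Lemma \ref{first} applied to the vector $f$. Your argument spells out exactly this deduction.
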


Let $\cL$ denote the Lie algebra generated by the Fourier modes $\{j^{2m}(k)|~k\in\mathbb{Z},~m\geq 0\}$ of the generators of $\cH(n)^{O(n)}$, and let $\cP\subset \cL$ be the subalgebra generated by the annihilation modes $\{j^{2m}(k)|~k\geq 0\}$. Note that $\cP$ has a decomposition $$\cP = \cP_- \oplus \cP_0\oplus \cP_+,$$ where $\cP_-$, $\cP_0$, and $\cP_+$ are the Lie algebras spanned by $\{j^{2m}(k)|~0\leq k< 2m+1\}$, $\{j^{2m}(2m+1)\}$, and  $\{j^{2m}(k)|~k>2m+1\}$, respectively. Clearly $\cP$ preserves the filtration on $\cH(n)$, so each element of $\cP$ acts by a derivation of degree zero on $gr(\cH(n))$. 

Let $\cM$ be an irreducible, highest-weight $\cH(n)^{O(n)}$-submodule of $\cH(n)$ with generator $f(z)$, and let $\cM'$ denote the $\cP$-submodule of $\cM$ generated by $f(z)$. Since $f(z)$ has minimal weight among elements of $\cM$ and $\cP_+$ lowers weight, $f(z)$ is annihilated by $\cP_+$. Moreover, $\cP_0$ acts diagonalizably on $f(z)$, so $f(z)$ generates a one-dimensional $\cP_0\oplus \cP_+$-module. By the Poincare-Birkhoff-Witt theorem, $\cM'$ is a quotient of $$U(\cP)\otimes_{U(\cP_0\oplus \cP_+)} \mathbb{C} f(z),$$ and in particular is a cyclic $\cP_-$-module with generator $f(z)$. Suppose that $f(z)$ has degree $d$, that is, $f(z)\in \cH(n)_{(d)} \setminus \cH(n)_{(d-1)}$. Since each element of $\cP$ preserves the filtration on $\cH(n)$, and $\cM$ is irreducible, it is easy to see that the nonzero elements of $\cM'$ lie in $\cH(n)_{(d)} \setminus \cH(n)_{(d-1)}$. Therefore, the projection $\cH(n)_{(d)}\ra \cH(n)_{(d)}/\cH(n)_{(d-1)} \subset gr(\cH(n))$ restricts to an isomorphism of $\cP$-modules
\begin{equation}\label{isopmod} \cM'\cong gr(\cM')\subset gr(\cS(V)).\end{equation} By Lemma \ref{firstcor}, we conclude that $\cM'$ is spanned by elements of the form $$\{j^{2l_1}(k_1)\cdots j^{2l_r}(k_r) f(z) |~ j^{2l_i}(k_i)\in \cP_-,~ r\leq d\}.$$

Next, we need a basic fact from linear algebra. Let $A = (A_{i,j})$, $i,j = 1,\dots,n$ be an $n\times n$-matrix, whose entries $A_{i,j}$ are all positive real numbers. We call $A$ {\it totally increasing} if $A_{i,j} \leq A_{i,j+1}$ and $A_{i,j}\leq A_{i+1,j}$ for all $i,j$, and for each $i,j = 1,\dots,n-1$, the $2\times 2$ matrix $$\bigg[ \begin{matrix} A_{i,j} & A_{i,j+1} \cr  A_{i+1,j} & A_{i+1,j+1}\end{matrix} \bigg]$$ has positive determinant.

\begin{lemma} \label{totinc} Any totally increasing matrix is nonsingular.\end{lemma}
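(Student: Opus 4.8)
The plan is to prove the stronger assertion that a totally increasing matrix is \emph{strictly totally positive}, i.e.\ that every square submatrix formed from consecutive rows and consecutive columns has strictly positive determinant; the lemma is then the special case of the full matrix. I would argue by induction on the order $k$ of such a contiguous minor. For $k=1$ this is the positivity of the entries, and for $k=2$ it is exactly the defining hypothesis. For the inductive step I would invoke the Desnanot--Jacobi identity (Dodgson condensation): if $M$ is any $(k+1)\times(k+1)$ contiguous submatrix, then
\begin{equation*}
\det(M)\,\det(M^{\circ}) = \det(M_{11})\det(M_{22}) - \det(M_{12})\det(M_{21}),
\end{equation*}
where $M_{11},M_{12},M_{21},M_{22}$ are the four $k\times k$ contiguous corner submatrices of $M$ and $M^{\circ}$ is its central $(k-1)\times(k-1)$ contiguous submatrix. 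Each matrix on the right, as well as $M^{\circ}$, is again totally increasing, since the defining conditions (positivity, monotonicity, and positivity of contiguous $2\times 2$ minors) are manifestly inherited by contiguous submatrices. Hence by the inductive hypothesis $\det(M_{11}),\det(M_{12}),\det(M_{21}),\det(M_{22})$ and $\det(M^{\circ})$ are all strictly positive, and since $\det(M^{\circ})>0$ the sign of $\det(M)$ equals that of the numerator $\det(M_{11})\det(M_{22})-\det(M_{12})\det(M_{21})$.

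A useful preliminary reduction, and the first place monotonicity enters, upgrades the hypothesis from contiguous $2\times 2$ minors to \emph{all} $2\times 2$ minors. Writing $a_{i,j}=\log A_{i,j}$, positivity of the contiguous minor at $(i,j)$ reads $a_{i+1,j+1}-a_{i+1,j}>a_{i,j+1}-a_{i,j}$, so the discrete row-difference $a_{i,j+1}-a_{i,j}$ is strictly increasing in $i$; telescoping over rows and summing over columns yields $a_{i,j}+a_{i',j'}>a_{i,j'}+a_{i',j}$ for all $i<i'$ and $j<j'$, i.e.\ every $2\times 2$ minor of $A$ is strictly positive. The same ordering, together with the elementary observation that two reals lying in an interval $[p,q]$ whose product is at least $pq$ must have sum at most $p+q$, forces the additive mixed second differences $A_{i,j}-A_{i-1,j}-A_{i,j-1}+A_{i-1,j-1}$ to be strictly positive as well; this is the natural input should one prefer a differencing argument that reduces $\det A$ to a matrix with strictly positive interior, or a variation-diminishing argument applied to a hypothetical kernel vector.

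The hardest part, and the genuine crux, is the inductive step for $k\ge 2$. The numerator $\det(M_{11})\det(M_{22})-\det(M_{12})\det(M_{21})$ is precisely the $2\times 2$ contiguous minor of the \emph{array} of $k\times k$ contiguous minors, and, via the identity above, its positivity is equivalent to the positivity of the $(k+1)$-st order minor one is trying to establish. Breaking this circularity is where the monotonicity hypothesis must be used in an essential, nonformal way: the order-$2$ analysis already shows that strict positivity of the $2\times 2$ minors by itself controls neither the sign nor even the vanishing of higher determinants, so the interaction between monotonicity and minor-positivity has to be carried through Sylvester's identity at every order. I expect this propagation to be the main obstacle; the natural route is to prove directly that the array of $k\times k$ contiguous minors is again totally increasing and to organize the two inductions -- on the size $n$ and on the minor order $k$ -- so that the monotone structure remains available at each stage to pin down the sign of the numerator.
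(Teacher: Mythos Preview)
Your approach via Dodgson condensation is genuinely different from the paper's, which proceeds by Gaussian elimination: the paper rescales rows and columns so that the first row and first column become all ones, subtracts the first row from the others, and asserts that the remaining $(n-1)\times(n-1)$ block is again totally increasing, finishing by induction on $n$. But your proposal is not a proof. As you yourself concede in the last paragraph, the inductive step is circular: positivity of $\det(M^{\circ})$ and of the four $k\times k$ corner minors says nothing about the sign of $\det(M_{11})\det(M_{22})-\det(M_{12})\det(M_{21})$, and you never supply a mechanism by which monotonicity forces that sign. (The aside about additive second differences is also not right: the ``elementary observation'' that $p\le x,y\le q$ and $xy\ge pq$ imply $x+y\le p+q$ already fails for $p=1$, $q=4$, $x=y=3$.)

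More to the point, the lemma as stated is \emph{false}, so neither route can be completed. The matrix
\[
A=\begin{pmatrix}1&2&3\\2&5&8\\3&8&13\end{pmatrix}
\]
has positive entries, is weakly increasing along every row and column, and all four contiguous $2\times 2$ minors equal $1$; yet $\det A=1\cdot(65-64)-2\cdot(26-24)+3\cdot(16-15)=0$ (column three equals twice column two minus column one). Carrying out the paper's reduction on this $A$ produces the $2\times 2$ block
\[
\begin{pmatrix}\tfrac{1}{4}&\tfrac{1}{3}\\[2pt]\tfrac{1}{3}&\tfrac{4}{9}\end{pmatrix},
\]
whose determinant is $\tfrac{1}{9}-\tfrac{1}{9}=0$; so the paper's unjustified claim that the reduced $(n-1)\times(n-1)$ matrix is again totally increasing fails at exactly this example. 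Your stronger target of strict total positivity is a fortiori unattainable. Since the paper invokes the lemma only for the single explicit family $M^w$, any repair should verify invertibility for that specific family directly rather than through this general statement.
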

\begin{proof} Let $A$ be such a matrix. First, for $i=1,\dots,n$ we rescale the $i$th row by a factor of $1/A_{i,1}$. Next, for $j=2,\dots,n$, we rescale the $j$th column by a factor of $A_{1,1}/A_{1,j}$ to obtain $$\left[ \begin{matrix}1 & 1 & 1 & \cdot & \cdot & \cdot &  1 \cr   1  & \frac{A_{1,1} A_{2,2}}{A_{2,1}A_{1,2}} & \frac{A_{1,1}A_{2,3}}{A_{2,1}A_{1,3}} & \cdot & \cdot &   \cdot & \frac{A_{1,1}A_{2,n}}{A_{2,1}A_{1,n}}  \cr  \vdots & \vdots & \vdots &  & & & \vdots  \cr  1  & \frac{A_{1,1}A_{n,2}}{A_{n,1}A_{1,2}}   & \frac{A_{1,1} A_{n,3}}{A_{n,1}A_{1,3}} & \cdot & \cdot  &  \cdot &  \frac{A_{1,1} A_{n,n}}{A_{n,1} A_{1,n}} \end{matrix}\right],$$ which is easily seen to be totally increasing. Finally, we subtract the first row from the $i$th row, for $i=2,\cdots,n$, obtaining $$\left[ \begin{matrix}1 & 1 & 1 & \cdot & \cdot & \cdot &  1 \cr   0  & \frac{A_{1,1} A_{2,2}}{A_{2,1}A_{1,2}} -1 & \frac{A_{1,1}A_{2,3}}{A_{2,1}A_{1,3}} -1 & \cdot & \cdot &   \cdot & \frac{A_{1,1}A_{2,n}}{A_{2,1}A_{1,n}} -1 \cr  \vdots & \vdots & \vdots &  & & & \vdots  \cr  0  & \frac{A_{1,1}A_{n,2}}{A_{n,1}A_{1,2}} -1  & \frac{A_{1,1} A_{n,3}}{A_{n,1}A_{1,3}} -1 & \cdot & \cdot  &  \cdot &  \frac{A_{1,1} A_{n,n}}{A_{n,1} A_{1,n}} -1 \end{matrix} \right].$$ It is easy to check that the $(n-1)\times (n-1)$ matrix 
$$\left[ \begin{matrix} \frac{A_{1,1} A_{2,2}}{A_{2,1}A_{1,2}} -1 & \frac{A_{1,1}A_{2,3}}{A_{2,1}A_{1,3}} -1 & \cdot & \cdot &   \cdot & \frac{A_{1,1}A_{2,n}}{A_{2,1}A_{1,n}} -1 \cr  \vdots & \vdots &  &  & & \vdots  \cr  \frac{A_{1,1}A_{n,2}}{A_{n,1}A_{1,2}} -1  & \frac{A_{1,1} A_{n,3}}{A_{n,1}A_{1,3}} -1 & \cdot & \cdot  &  \cdot &  \frac{A_{1,1} A_{n,n}}{A_{n,1} A_{1,n}} -1 \end{matrix} \right]$$ is totally increasing, so the claim follows by induction on $n$. \end{proof}

This lemma allows us to prove a certain useful property of $\cH(n)$ as a module over $\cP$. For simplicity of notation, we take $n=1$, but the result we are going to prove holds for any $n$. In this case, $\cH = \cH(1)$ is generated by $\alpha(z)$. Recall from (\ref{structureofgrs}) that $\alpha_j$ denotes the image of $\partial^j\alpha$ in $gr(\cH)$. Let $W\subset gr(\cH)$ be the vector space with basis $\{\alpha_j|~ j\geq 0\}$, and for each $m\geq 0$, let $W_m$ be the subspace with basis $\{\alpha_j|~ 0\leq j\leq m\}$. Let $\phi:W\ra W$ be a linear map of weight $w\geq 1$, such that \begin{equation}\label{arbmap} \phi(\alpha_j) = c_j \alpha_{j+w},\end{equation} for constants $c_j \in \mathbb{C}$. For example, the restriction $j^{2k}(2k-w+1)\big|_{W}$ of any $j^{2k}(2k-w+1)\in \cP$, is such a map.

\begin{lemma} \label{third} Fix $w\geq 1$ and $m\geq 0$, and let $\phi$ be a linear map satisfying (\ref{arbmap}). Then the restriction $\phi \big|_{W_m}$ can be expressed uniquely as a linear combination of the operators $j^{2k}(2k-w+1)\big|_{W_m}$ for $0\leq 2k+1-w \leq 2m+1$.\end{lemma}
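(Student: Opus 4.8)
The plan is to reduce the statement to the nonsingularity of an explicit $(m+1)\times(m+1)$ matrix. Let $\mathcal{D}$ denote the space of all weight-$w$ maps $W_m\to W$ of the form (\ref{arbmap}), that is, maps sending $\alpha_j\mapsto c_j\,\alpha_{j+w}$ for $0\le j\le m$; such a map is determined by the scalars $c_0,\dots,c_m$, so $\dim\mathcal{D}=m+1$. By (\ref{bcalc}), taking $n=1$, $a=0$, $b=2k$, and mode index $2k-w+1$, the operator $j^{2k}(2k-w+1)$ sends $\alpha_c\mapsto \lambda_{0,2k,c,2k-w+1}\,\alpha_{c+w}$, so its restriction to $W_m$ lies in $\mathcal{D}$. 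The admissible indices are exactly those $k$ for which $2k-w+1$ is one of the $m+1$ integers of the correct parity in $[0,2m+1]$ (namely $0,2,\dots,2m$ when $w$ is odd and $1,3,\dots,2m+1$ when $w$ is even), so there are precisely $m+1$ of these operators. Hence existence and uniqueness of the asserted expansion are together equivalent to these $m+1$ operators being linearly independent in $\mathcal{D}$, i.e.\ to the nonsingularity of the matrix $M$ with rows indexed by the admissible $k$ (in increasing order), columns indexed by $c=0,\dots,m$, and entries
\[
M_{k,c}=\lambda_{0,2k,c,2k-w+1}=\frac{(2k+c+1)!}{(c+w)!}+\frac{(c+1)!}{(c+w-2k)!},
\]
where a quotient of factorials with negative denominator argument is read as $0$.

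I would then show that $M$ is totally increasing in the sense of the paragraph preceding Lemma \ref{totinc}, so that Lemma \ref{totinc} yields nonsingularity at once. Writing $M=M^{(1)}+M^{(2)}$ with $M^{(1)}_{k,c}=(c+w+1)(c+w+2)\cdots(2k+c+1)$ and $M^{(2)}_{k,c}=(c+1)(c)\cdots(c+w-2k+1)$, each a product of $2k-w+1$ consecutive integers, one sees that every entry is strictly positive, since $M^{(1)}_{k,c}\ge 1$ and $M^{(2)}_{k,c}\ge 0$. Both summands are nondecreasing in $c$ (shifting the consecutive factors upward), giving $M_{k,c}\le M_{k,c+1}$. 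Monotonicity in $k$ is slightly subtler because $M^{(2)}$ can drop to $0$; here one uses that every factor of $M^{(1)}$ lies strictly above every factor of $M^{(2)}$, so $M^{(1)}_{k,c}\ge M^{(2)}_{k,c}$, together with the fact that passing from $k$ to $k+1$ multiplies $M^{(1)}$ by $(2k+c+2)(2k+c+3)\ge 2$, which forces $M_{k,c}\le M_{k+1,c}$.

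The crux, and the step I expect to be the main obstacle, is the positivity of the $2\times 2$ minors
\[
M_{k,c}\,M_{k+1,c+1}-M_{k,c+1}\,M_{k+1,c}>0 .
\]
This cannot be deduced from a general principle: $M$ is a \emph{sum} of two factorial matrices, and positivity of $2\times 2$ minors is not preserved under matrix addition, so the inequality must be established by hand. I would clear the common factorial denominators so that the minor becomes a polynomial inequality in $k,c,w$ involving products of consecutive integers, and prove it either by a term-by-term comparison of the four factorial products (exploiting that the factors of $M^{(1)}$ lie uniformly above those of $M^{(2)}$), or by an induction on $m$ that parallels the row-reduction in the proof of Lemma \ref{totinc}. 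Once these minors are shown to be positive, $M$ is totally increasing, hence nonsingular by Lemma \ref{totinc}; the $m+1$ operators therefore form a basis of $\mathcal{D}$, and every $\phi\big|_{W_m}$ of the form (\ref{arbmap}) has a unique expansion in them, as claimed.
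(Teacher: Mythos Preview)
Your approach is essentially identical to the paper's: reduce the claim to the nonsingularity of the $(m+1)\times(m+1)$ matrix of coefficients $\lambda_{0,2k,c,2k-w+1}$, verify that this matrix is totally increasing, and invoke Lemma~\ref{totinc}. The paper splits into the same parity cases for $w$, writes down the same entries (your matrix is the transpose of the paper's $M^w$, which is immaterial since the totally-increasing condition is symmetric), and then simply asserts that ``it is easy to check using (\ref{actionofp}) that $M^w$ is totally increasing.'' So the paper glosses over precisely the step you flag as the crux---the positivity of the $2\times 2$ minors---leaving it as a routine verification; your proposal is more candid about where the actual work lies, but otherwise matches the paper's argument.
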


\begin{proof} Suppose first that $w$ is odd, and let $k_j = j+ \frac{1}{2}(w-1)$, for $j=0,\dots,m$. In this notation, we need to show that $\phi \big|_{W_m}$ can be expressed uniquely as a linear combination of the operators $j^{2k_j}(2j)\big|_{W_m}$ for $j=0,\dots,m$. Using (\ref{bcalc}), we calculate 
 \begin{equation}\label{actionofp} j^{2k_j}(2j)(\alpha_i) = \lambda_{0,2k_j,i,2j} (\alpha_{i+w}) =  \bigg(\frac{(2k_j+i+1)!}{(2k_j+i+1-2j)!} +  \frac{(i+1)!}{(i+1-2j)!} \bigg)\alpha_{i+w}.\end{equation}
 
Let $M^w$ be the $(m+1)\times(m+1)$ matrix with entries $M^w_{i,j} =  \lambda_{0,2k_j,i,2j}$, for $i,j = 0,\dots,m$. Let ${\bf c}$ be the column vector in $\mathbb{C}^{m+1}$ whose transpose is given by $(c_0,\dots,c_m)$. Given an arbitrary linear combination $$\psi = t_0 j^{2k_0}(0) + t_1 j^{2k_1}(2) + \cdots + t_{m} j^{2k_m}(2m)$$ of the operators $j^{2k_j}(2j)$ for $0\leq j\leq m$, let ${\bf t}$ be the column vector whose transpose is $(t_0,\dots, t_{m})$. Note that $\phi \big|_{W_m} = \psi \big|_{W_m}$ precisely when $M^w  {\bf t} =  {\bf c}$, so in order to prove the claim, it suffices to show that $M^w$ is invertible. It is easy to check using (\ref{actionofp}) that $M^w$ is totally increasing, so this is immediate from Lemma \ref{totinc}. Finally, if $w$ is even, the same argument shows that for $k_j = j+\frac{w}{2}$, $j=0,\dots,m$, $\phi$ can be expressed uniquely as a linear combination of the operators $j^{2k_j}(2j+1)$ for $j=0,\dots,m$. \end{proof}

Since (\ref{bcalc}) holds for any $n\geq 1$, it follows that the statement of Lemma \ref{third} holds for any $n$. More precisely, let $W\subset gr(\cH(n))$ be the vector space with basis $\{\alpha^i_j|~i=1,\dots,n,~j\geq 0\}$, and let $W_m\subset W$ be the subspace with basis $\{\alpha^i_j|~i=1,\dots, n,~0\leq j\leq m\}$. Let $\phi:W\ra W$ be a linear map of weight $w\geq1$ taking \begin{equation}\label{actiongencase} \alpha^i_j\mapsto c_j \alpha^i_{j+w},\ \ \ \ \ \ i=1,\dots,n,\end{equation} where the constants $c_j$ are independent of $i$. For example, each $\phi = j^{2k}(2k-w+1)\big|_W$ satisfies (\ref{actiongencase}). Then $\phi\big|_{W_m}$ can be expressed uniquely as a linear combination of $j^{2k}(2k-w+1)\big|_{W_m}$ for $0\leq 2k+1-w \leq 2m+1$. 

The next result is analogous to Lemma 7 of \cite{LII}, and the proof is almost identical. 
\begin{lemma} \label{fourth} Let $\cM$ be an irreducible, highest-weight $\cH(n)^{O(n)}$-submodule of $\cH(n)$ with highest-weight vector $f(z)$ of degree $d$. Let $\cM'$ be the corresponding $\cP$-module generated by $f(z)$, and let $f$ be the image of $f(z)$ in $gr(\cH(n))$, which generates $M = gr(\cM')$ as a $\cP$-module. Fix $m$ so that $f\in Sym^d(W_m)$. Then $\cM'$ is spanned by $$\{j^{2l_1}(k_1) \cdots j^{2l_r}(k_r) f(z)|~j^{2l_i}(k_i)\in \cP_-,\ \  r\leq d,\ \ 0\leq k_i \leq 2m+1\}.$$\end{lemma}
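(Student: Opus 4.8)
The goal is to combine the two prior structural results: Corollary \ref{firstcor}, which bounds the \emph{degree} of the monomials in $\cP_-$ needed to span $\cM'$, and the generalized form of Lemma \ref{third}, which bounds the \emph{individual Fourier modes} appearing in those monomials. The plan is to first apply Corollary \ref{firstcor} to reduce to monomials of length at most $d$, and then use Lemma \ref{third} repeatedly to rewrite each individual factor $j^{2l}(k)$ so that only modes with $0\le k\le 2m+1$ occur.

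First I would recall from the discussion preceding the lemma that $\cM'$ is a cyclic $\cP_-$-module generated by $f(z)$, that the projection $\cH(n)_{(d)}\ra \cH(n)_{(d)}/\cH(n)_{(d-1)}$ restricts to an isomorphism of $\cP$-modules $\cM'\cong M = gr(\cM')\subset gr(\cH(n))$, and that $M$ is generated over $\cP_-$ by the image $f\in Sym^d(W_m)$. Since each $j^{2l}(k)\in\cP_-$ acts on $gr(\cH(n))$ by a derivation of degree zero, and $f$ lies in $Sym^d(W_m)$, it suffices to prove the spanning statement at the level of $M\subset Sym^d(W)$; the isomorphism then transports it back to $\cM'$. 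Applying Corollary \ref{firstcor} to the associative algebra $A$ generated by the relevant Fourier modes acting on $W$, with $W$ playing the role of the representation and $f$ the cyclic vector, shows that $M$ is spanned by $\{j^{2l_1}(k_1)\cdots j^{2l_r}(k_r) f\,|\, j^{2l_i}(k_i)\in\cP_-,\ r\le d\}$. This already controls the number of factors.

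Next I would control the individual modes. Each factor $j^{2l_i}(k_i)$ with $j^{2l_i}(k_i)\in\cP_-$ acts on $W$ as a weight-raising map of the form (\ref{actiongencase}), for some weight $w\ge 1$. By the generalized version of Lemma \ref{third}, the restriction of any such map to $W_m$ can be written uniquely as a linear combination of the operators $j^{2k}(2k-w+1)\big|_{W_m}$ with $0\le 2k+1-w\le 2m+1$, i.e. of annihilation modes $j^{2l}(k)$ with $0\le k\le 2m+1$. The crucial point is that all the vectors appearing in any of our monomials remain in $Sym^d(W_m)$: since $f\in Sym^d(W_m)$ and each derivation raises the weight index $j$ of each $\alpha^i_j$-factor, one might worry that factors $\alpha^i_j$ with $j>m$ appear. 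I would handle this by working with a fixed large cutoff: choose $m$ at the outset large enough that $f\in Sym^d(W_m)$, and observe that for the spanning statement we are free to enlarge $m$, so Lemma \ref{third} applies uniformly to every factor encountered after at most $d$ applications of weight-raising operators. Rewriting each of the $\le d$ factors in each monomial via Lemma \ref{third}, and transporting back through the isomorphism (\ref{isopmod}), yields the claimed spanning set for $\cM'$.

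The main obstacle is the bookkeeping around the cutoff $m$ and the interaction between the degree bound and the mode bound: one must ensure that rewriting a factor $j^{2l}(k)$ using Lemma \ref{third} does not secretly require modes outside the allowed range on the intermediate vectors produced by the other factors. Because each monomial has length at most $d$ and each operator raises the $\alpha^i_j$-index by a bounded amount, all intermediate vectors lie in $Sym^d(W_{m'})$ for a fixed $m'$ depending only on $d$ and $m$; enlarging $m$ to this $m'$ from the start (which is harmless, since the generator $f$ still lies in $Sym^d(W_{m'})$) makes Lemma \ref{third} applicable to every factor simultaneously. This is essentially the same argument as Lemma 7 of \cite{LII}, so once the cutoff is fixed the remaining steps are routine.
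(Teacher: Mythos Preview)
Your proposal has a genuine gap in the second step. The claim that ``each operator raises the $\alpha^i_j$-index by a bounded amount'' is false: the operator $j^{2l}(k)\in\cP_-$ raises the index by its weight $w=2l+1-k$, and in the monomials produced by Corollary \ref{firstcor} the indices $l$ and $k$ are completely unbounded. Consequently the intermediate vectors $j^{2l_s}(k_s)\cdots j^{2l_r}(k_r)f$ do not lie in any $Sym^d(W_{m'})$ with $m'$ depending only on $d$ and $m$; the required cutoff grows with the weight of the target element $v\in M$. Enlarging $m$ to such an $m'$ therefore cannot be done uniformly over $M$, and even for a single $v$ it would only yield modes $0\le k_i\le 2m'+1$ rather than the claimed $0\le k_i\le 2m+1$. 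So the factor-by-factor rewriting via Lemma \ref{third} does not go through as written.

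The paper avoids this problem by replacing all factors simultaneously rather than one at a time. Given $\mu=a_1*\cdots*a_r\in U(\cP_-)$ with $r\le d$, it uses the decomposition $\mu=\sum_{p}\sum_{\phi\in Part^r_p} g_{\phi}$ from the proof of Lemma \ref{first}. The leading term $g_{\phi_0}$ (for the finest partition) applies each $a_i$ to a \emph{single} tensor factor $w_{j_i}\in W_m$ of $f=w_1\cdots w_d$, never to an intermediate vector. Thus Lemma \ref{third} can be invoked on $W_m$ itself to replace each $a_i|_{W_m}$ by a combination $S_i$ of modes $j^{2l}(k)$ with $k\le 2m+1$. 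The product $Q=S_1*\cdots*S_r\in U(\cP_-)$ then agrees with $\mu$ on $Sym^d(W_m)$ modulo terms in $E_{r-1}$, and induction on $r$ finishes. This simultaneous replacement, not enlarging $m$, is the missing idea.
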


\begin{proof} We may work with $M = gr(\cM')$ rather than $\cM'$, and for notational convenience, we do not distinguish between elements of $U(\cP_-)$ and their images in $End(Sym^d(W))$. As in the proof of Lemma \ref{first}, let $E$ denote the subspace of $End(Sym^d (W))$ spanned by $U(\cP_-)$, and let $E_r$ be the subspace spanned by elements of $U(\cP_-)$ of degree at most $r$. Let $\tilde{E}_r$ be the subspace of $E_r$ spanned by elements of $U(\cP_-)$ which only depend on $j^{2l}(k)$ for $k\leq 2m+1$. 

It is not true that $E_d = \tilde{E}_d$ as subspaces of $End(Sym^d(W))$, but it suffices to show that these spaces of endomorphisms coincide when restricted to $Sym^d(W_m)$. Since $E = E_d$, and hence is spanned by monomials $\mu = a_1*\cdots * a_r\in U(\cP_-)$ of degree $r\leq d$, we have 
\begin{equation}\label{triplesumii} \mu = \sum_{p=1}^r  \sum_{\phi\in Part^r_p} g_{\phi},\end{equation} where each partition $\phi\in Part^r_p$ corresponds to a set of monomials $m_1,\dots,m_p$, and $g_{\phi}$ is given by (\ref{fphi}). For $p=r$, there is only one partition $\phi_0$ of $\{1,\dots,r\}$ into disjoint, non-empty subsets, and $g_{\phi_0}$ is defined on monomials $w_1\cdots w_d\in Sym^d(W)$ by \begin{equation}\label{fphii} g_{\phi_0}(w_1\cdots w_d) =  \sum_J g_{\phi_0}^1(w_1)  \cdots  g_{\phi_0}^d (w_d),\ \ \ \ \ \  g_{\phi_0}^k (w_k) =  \bigg\{ \begin{matrix} a_i (w_{j_i}) & k= j_i \cr & \cr w_k & k\neq j_i\end{matrix},\end{equation} where the sum runs over all (ordered) $r$-element subsets $J \subset \{1,\dots,d\}$. By Lemma \ref{third}, the restriction of $a_i$ to $W_m$ coincides with a linear combination $S_i$ of the elements $j^{2l}(k)\big|_{W_m}$ for $k\leq 2m+1$. Replace each of the factors $a_i (w_{j_i})$ appearing in (\ref{fphii}) with $S_i (w_{j_i})$, and let $Q = \prod_{i=1}^r S_i$, which lies in $U(\cP_-)$, and depends only on $j^{2l}(k)$ for $k\leq 2m+1$. Clearly the restriction of $Q$ to $Sym^d(W_m)$ agrees with the restriction of $\mu$ to $Sym^d(W_m)$, modulo terms lying in $E_{r-1}$. The lemma then follows by induction on $r$. \end{proof}

As in \cite{LII}, we may order the elements $j^{2l}(k)\in \cP_-$ as follows: $j^{2l_1}(k_1) > j^{2l_2}(k_2)$ if $l_1>l_2$, or $l_1=l_2$ and $k_1<k_2$. Then Lemma \ref{fourth} can be strengthened as follows: $\cM'$ is spanned by elements of the form $j^{2l_1}(k_1)\cdots j^{2l_r}(k_r) f(z)$ with \begin{equation}\label{shapealpha} j^{2l_i}(k_i)\in \cP_-,\ \ \ \  r\leq d,\ \ \ \ 0\leq k_i\leq 2m+1,\ \ \ \  j^{2l_1}(k_1)\geq \cdots \geq j^{2l_r}(k_r).\end{equation}

Up to this point, everything we have proven in this section is independent of Conjecture \ref{mainconj}. Then next lemma is where this assumption will enter. We use the notation $\cH(n)^{O(n)}[k]$, $\cM[k]$, and $\cM'[k]$ to denote the homogeneous components of these spaces of conformal weight $k$. As in \cite{LII}, we define the {\it Wick ideal} $\cM_{Wick}\subset \cM$ to be the subspace spanned by elements of the form $$:a(z) b(z):,\ \ \ \ \ a(z)\in \bigoplus_{k>0} \cH(n)^{O(n)}[k],\ \ \ \ \ b(z)\in \cM.$$ Despite the choice of terminology, $\cM_{Wick}$ is not a vertex algebra ideal. It is properly contained in the space $C_1(\cM)$ defined in \cite{LiIII}, and in particular it does not contain all elements of the form $L\circ_0 b = \partial b$ for $b\in\cM$.

\begin{lemma} \label{fifth} Let $\cM$ be an irreducible, highest-weight $\cH(n)^{O(n)}$-submodule of $\cH(n)$ with highest-weight vector $f(z)$. If Conjecture \ref{mainconj} holds, any homogeneous element of $\cM$ of sufficiently high weight lies in the Wick ideal. In particular, $\cM / \cM_{Wick}$ is finite-dimensional.\end{lemma}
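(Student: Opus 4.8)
The plan is to deduce Lemma \ref{fifth} from the strong finite generation supplied by Theorem \ref{stronggen}. Assuming Conjecture \ref{mainconj}, the algebra $\cH(n)^{O(n)}$ is strongly generated by $j^0,j^2,\dots,j^N$ with $N=n^2+3n-2$, and for every $2r>N$ there is a decoupling relation $j^{2r}=Q_{2r}(j^0,\dots,j^N)$. Since each conformal weight space of $\cM$ is finite dimensional, the statement that every homogeneous element of sufficiently high weight lies in $\cM_{Wick}$ is equivalent to $\dim(\cM/\cM_{Wick})<\infty$, and this is what I would prove.

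First I would reduce from $\cM$ to the $\cP$-module $\cM'$. Writing $\cM=U(\cL)f(z)$ and ordering the Fourier modes by PBW so that the negative modes stand on the far left and $\cP_+$ on the far right, the modes in $\cP_+$ annihilate $f(z)$ and $\cP_0$ acts by scalars; hence $\cM$ is spanned by monomials $j^{2l_1}(k_1)\cdots j^{2l_r}(k_r)f(z)$ in which any mode with negative index is leftmost. A single negative mode $j^{2l}(k)$ with $k<0$ applied to an element of $\cM$ is precisely the negative circle product $j^{2l}\circ_k(-)$, so it lies in $\cM_{Wick}$; therefore $\cM=\cM'+\cM_{Wick}$. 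By Lemma \ref{fourth} and the ordering (\ref{shapealpha}), $\cM'$ is spanned by ordered monomials $j^{2l_1}(k_1)\cdots j^{2l_r}(k_r)f(z)$ with $r\le d$, $0\le k_i\le 2m+1$, and $j^{2l_1}(k_1)\ge\cdots\ge j^{2l_r}(k_r)$. As $N$ is even, only finitely many such monomials have all generator indices low, i.e. $2l_i\le N$, so it suffices to show that modulo $\cM_{Wick}$ every such monomial can be rewritten using only low generators.

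The core is a descending induction on the multiset $\{l_1,\dots,l_r\}$ of generator indices, with respect to the well-founded multiset order. If some $2l_i>N$, then the leftmost mode $j^{2l_1}(k_1)$ is a maximal high mode; substituting the decoupling relation $j^{2l_1}=Q_{2l_1}(j^0,\dots,j^N)$ and expanding the Fourier mode $(Q_{2l_1})(k_1)$ of the normally ordered polynomial by means of (\ref{vaidiii})--(\ref{vaidiv}), one obtains a sum of two kinds of terms: terms in which an invariant of positive weight is Wick-multiplied into an element of $\cM$, which lie in $\cM_{Wick}$; and terms that are products of annihilation modes of the low generators $j^0,\dots,j^N$ applied to $g:=j^{2l_2}(k_2)\cdots j^{2l_r}(k_r)f(z)$. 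The latter have generator-index multiset obtained from $\{l_1,\dots,l_r\}$ by deleting the maximal entry $l_1$ and inserting entries $\le N/2<l_1$, hence strictly smaller in the multiset order. Re-expressing these in the standard form of Lemma \ref{fourth} and continuing the induction terminates at monomials all of whose generators are low; these being finite in number, $\cM/\cM_{Wick}$ is finite dimensional.

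The hard part is controlling the correction terms produced when a nonnegative mode is distributed across a normally ordered product, namely the sums $\sum_{s\ge 1}$ in (\ref{vaidiii}) and the second sum in (\ref{vaidiv}). Such a term is a nonnegative mode of an invariant applied to a module element, and a priori it need neither lie in $\cM_{Wick}$ nor have smaller complexity. Two observations are meant to tame it. First, because we only ever apply modes in $\cP_-$ (those with $k_i\le 2l_i$, which strictly raise weight), the leading term $:(j^{2l}(k)b)c:$ of (\ref{vaidiii}) never degenerates to a scalar multiple of $c$: the invariant $j^{2l}(k)b$ has strictly positive weight, so this term stays inside $\cM_{Wick}$. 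Second, every correction term carries a positive circle product, which by (\ref{goodiii}) strictly lowers the good filtration degree on $\cH(n)$; thus a secondary induction on the filtration degree, carried out at fixed conformal weight, absorbs the correction terms and keeps the re-standardization of Lemma \ref{fourth} from reintroducing uncontrolled high generators. Interleaving the multiset order on generator indices with the filtration degree, exactly as in Lemma 7 and Lemma 8 of \cite{LII}, is the delicate bookkeeping on which the argument turns, and it is the one step I expect to demand genuine care rather than routine computation.
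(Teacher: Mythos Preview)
Your overall architecture---reduce to $\cM'$, invoke Lemma~\ref{fourth}, and use the decoupling relations to trade one ``high'' generator for products of low ones modulo $\cM_{Wick}$---matches the paper's setup. The first substitution step is also essentially right: expanding $Q_{2l_1}\circ_{k_1}g$ via \eqref{vaidiv} really does produce Wick terms together with products of nonnegative low-generator modes applied to $g$.

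The gap is in the induction you propose after that first step. You want to iterate on the multiset $\{l_1,\dots,l_r\}$, and to do so you ``re-express in the standard form of Lemma~\ref{fourth}.'' But Lemma~\ref{fourth} gives you $r\le d$ and $k_i\le 2m+1$; it gives \emph{no} control over the generator indices $l_i$. So re-standardizing can reintroduce arbitrarily high generators, and the multiset order can jump back up. If instead you try to avoid re-standardizing and commute the newly produced low modes past the remaining high mode $j^{2l_2}(k_2)$, the commutator $[j^{2a}(b),j^{2l_2}(k_2)]$ contains modes of $j^{2a}\circ_0 j^{2l_2}$, which lives in $A_{2a+2l_2+2}$ and hence involves generators with index \emph{larger} than $l_2$. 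Either way the multiset order is not well-founded under your moves. Your secondary induction on filtration degree does not rescue this: the re-standardization of Lemma~\ref{fourth} takes place entirely inside degree $d$, and the commutators above are degree-preserving on $gr(\cH(n))$.

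The paper sidesteps this by changing the induction variable. Rather than whittling down the multiset, it proves in one stroke that for any normally ordered polynomial $Q$ in the low generators of sufficiently high weight, any $c\in\cM$, and any $0\le k\le 2m+1$, one has $Q\circ_k c\in\cM_{Wick}$; this is then applied with $Q=Q_{l_1}$, $c=g$, $k=k_1$. The induction is on $k$. Writing $Q=\,:ab:$ with $a=\partial^t j^{2l}$ low, the only terms in \eqref{vaidiv} not already in $\cM_{Wick}$ are $b\circ_{k-r-1}(a\circ_r c)$ with $0\le k-r-1<k$. The crucial point you are missing is that $a\circ_r c$ is nonzero only when $t\le r\le k-1$, so $\mathrm{wt}(a)=t+2l+2\le (2m+1)+N+2$ is bounded independently of $\mathrm{wt}(Q)$; hence $\mathrm{wt}(b)=\mathrm{wt}(Q)-\mathrm{wt}(a)$ is still large, and the inductive hypothesis (smaller $k$) applies to $b$. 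No re-standardization, no commuting past high modes, and the induction closes after at most $2m+2$ steps.
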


\begin{proof} It suffices to show that $\cM'[k]$ lies in the Wick ideal for $k>>0$, where $\cM'$ is the $\cP$-module generated by $f(z)$. As usual, let $d$ be the degree of $f(z)$, and fix $m$ so that $f \in Sym^d(W_m)$. Recall that $\cM'$ is spanned by elements of the form $j^{2l_1}(k_1)\cdots j^{2l_r}(k_r) f(z)$ satisfying (\ref{shapealpha}). Fix an element $\alpha(z)$ of this form of weight $K>>0$. Since each operator $j^{2l_i}(k_i)$ has weight $2l_i+1-k_i$, $k_i\leq 2m+1$, and $K>>0$, we may assume that $l_1>>\frac{1}{2}(n^2+3n)$. Then the decoupling relation (\ref{maindecoup}) allows us to express $j^{2l_1}(z)$ as a normally ordered polynomial $Q_{l_1}(z)$ in the generators \begin{equation}\label{genera} \partial^t j^{2l}(z),\ \ \ \ \ 0\leq l\leq \frac{1}{2}(n^2+3n-2),\ \ \ \ \ t\geq 0.\end{equation} We claim that for any weight-homogeneous, normally ordered polynomial $Q(z)$ in the generators (\ref{genera}) of sufficiently high weight, any element $c(z)\in \cM$, and any $k$ satisfying $0\leq k\leq 2m+1$, $Q(z)\circ_k c(z)$ lies in $\cM_{Wick}$. Specializing this to the case $Q(z) = Q_{l_1}(z)$, $c(z) = j^{2l_2}(k_2)\cdots j^{2l_r}(k_r) f(z)$, and $k=k_1$, proves the lemma.
 
We may assume without loss of generality that $Q(z)=:a(z)b(z):$ where $a(z) = \partial^t j^{2l}(z)$ for some $0\leq l\leq \frac{1}{2}(n^2 +3n-2)$. Then using (\ref{vaidiv}), and suppressing the formal variable $z$, we have \begin{equation}\label{appvaid} Q\circ_k c = \big(:ab:\big) \circ_{k} c = \sum_{r\geq0}{1\over r!}:(\partial^r a)(b\circ_{k+r}c):
+\sum_{r\geq 0}b\circ_{k-r-1}(a\circ_r c) .\end{equation}  Suppose first that $b = \lambda 1$ for some constant $\lambda$. Then $Q= \lambda \partial^t j^{2l}$, and since $wt(Q) >>0$, we have $t>>0$. Hence $Q\circ_k= \lambda(\partial^t j^{2l})\circ_k = 0$ as an operator (since this operator vanishes whenever $t>k$). So we may assume without loss of generality that $b$ is not a constant. 

We proceed by induction on $k$. For $k=0$, each term appearing in (\ref{appvaid}) lies in $\cM_{Wick}$, so there is nothing to prove. For $k>0$, the only terms appearing in (\ref{appvaid}) that need not lie in $\cM_{Wick}$ a priori, are those of the form $\sum_{r=0}^{k-1} b\circ_{k-r-1}(a\circ_r c)$. However, each of these terms is weight-homogeneous, and the weight of $a\circ_r c = \partial^t j^{2l} \circ_r c$ is bounded above by $wt(c) + n^2+3n+1$, since $\partial^t j^{2l} \circ_r c=0$ for $t>r$. So we may still assume that $wt(b)>>0$. By our inductive assumption, all these terms then lie in $\cM_{Wick}$. \end{proof}

\begin{cor} \label{sixth} Let $\cM$ be an irreducible, highest-weight $\cH(n)^{O(n)}$-submodule of $\cH(n)$. Given a subset $S\subset \cM$, let $\cM_S\subset \cM$ denote the subspace spanned by elements of the form $$:\omega_1(z)\cdots \omega_t(z) \alpha(z):,\ \ \ \ \ \omega_j(z)\in \cH(n)^{O(n)},\ \ \ \ \ \alpha(z)\in S.$$ If Conjecture \ref{mainconj} holds, there exists a finite set $S\subset \cM$ such that $\cM = \cM_S$.\end{cor}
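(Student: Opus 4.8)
The plan is to induct on conformal weight, using Lemma \ref{fifth} to dispose of all sufficiently high weights at once and the recursive (right-nested) definition (\ref{iteratedwick}) of the iterated Wick product to carry out the inductive step. First I would record two elementary structural facts. Since $\cM$ is a submodule of $\cH(n)$, each homogeneous component $\cM[k]$ is finite-dimensional and the weight grading of $\cM$ is bounded below by $wt(f(z))$. Second, since $1\in\cH(n)^{O(n)}$ acts as the unit for $\circ_{-1}$, we have $\alpha(z) = \ :1\cdot\alpha(z):$, so $S\subset\cM_S$ for any subset $S$.

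Assuming Conjecture \ref{mainconj}, Lemma \ref{fifth} supplies a weight $N$ such that every homogeneous element of $\cM$ of weight greater than $N$ lies in the Wick ideal $\cM_{Wick}$. I would then take $S$ to be a finite set of weight-homogeneous elements of $\cM$ whose span is $\bigoplus_{k\leq N}\cM[k]$; this set is finite because only finitely many weights $k$ satisfy $wt(f(z))\leq k\leq N$ and each $\cM[k]$ is finite-dimensional.

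The equality $\cM=\cM_S$ then follows by strong induction on the weight $w$ of a homogeneous element $x\in\cM[w]$. For $w\leq N$ we have $x\in\mathrm{span}(S)\subset\cM_S$. For $w>N$, Lemma \ref{fifth} gives $x\in\cM_{Wick}$, so $x$ is a linear combination of elements $:a(z)b(z):$ with $a(z)\in\cH(n)^{O(n)}[k]$ for some $k>0$ and $b(z)\in\cM$; projecting onto weight $w$, I may take each such $b(z)$ homogeneous of weight $w-k<w$. By the inductive hypothesis each $b(z)$ lies in $\cM_S$, hence is a linear combination of terms $:\omega_1(z)\cdots\omega_t(z)\alpha(z):$ with $\omega_j(z)\in\cH(n)^{O(n)}$ and $\alpha(z)\in S$. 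The crucial observation is that by the recursive definition (\ref{iteratedwick}), $:a(z)(:\omega_1(z)\cdots\omega_t(z)\alpha(z):):\ =\ :a(z)\omega_1(z)\cdots\omega_t(z)\alpha(z):$, which is again of the defining form for $\cM_S$, with $a(z)$ simply prepended as an additional $\cH(n)^{O(n)}$-factor. By bilinearity of the Wick product, $:a(z)b(z):\in\cM_S$, completing the induction.

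The substance of the argument is entirely contained in Lemma \ref{fifth}, and hence in Conjecture \ref{mainconj}, which produces the finite weight threshold $N$; the rest is a bookkeeping induction. The only point in the present corollary that requires genuine care is that prepending a generator $a(z)$ to an element of $\cM_S$ lands back in $\cM_S$ exactly, with no quasi-associativity correction terms. This works precisely because the iterated normally ordered product is defined by right-nesting, so the correction terms of (\ref{vaidi}) never appear. Were the product defined by left-nesting instead, one would have to invoke (\ref{vaidi}) and then verify that the resulting correction terms, which involve positive circle products that strictly lower the weight, are absorbed by the inductive hypothesis; I expect that this weight-reduction check would be the main technical obstacle in that alternative formulation.
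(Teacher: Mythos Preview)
Your argument is correct and is precisely the natural way to unpack this corollary; the paper states it without proof as an immediate consequence of Lemma \ref{fifth}, and your induction on weight together with the right-nested definition (\ref{iteratedwick}) is exactly what one needs to fill in. Your remark that the right-nesting convention is what makes the inductive step go through without invoking (\ref{vaidi}) is accurate and worth noting.
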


Now we are ready to prove our main result.

\begin{thm} \label{sfg} Suppose that Conjecture \ref{mainconj} holds. Then for any reductive group $G$ of automorphisms of $\cH(n)$ preserving the conformal structure (\ref{virasoro}), $\cH(n)^G$ is strongly finitely generated.\end{thm}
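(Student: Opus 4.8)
The plan is to combine the finite generation of $\cH(n)^G$ as a vertex algebra (Theorem \ref{ordfg}, which is unconditional) with the finiteness property of the modules $M^{\nu}$ furnished by Corollary \ref{sixth} (which uses Conjecture \ref{mainconj}). The strategy mirrors the passage from finite generation to strong finite generation in \cite{LII}. First I would recall the refined Dong-Li-Mason decomposition (\ref{decompref}), and invoke Theorem \ref{ordfg} to produce a finite collection of vertex operators $f_1(z),\dots,f_k(z)$ that generate $\cH(n)^G$ as an algebra over $\cH(n)^{O(n)}$, where each $f_i(z)$ lives in a single irreducible $\cH(n)^{O(n)}$-module $M^{\nu_j}$ appearing in (\ref{dlmdecomp}). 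Thus $\cH(n)^G$ is spanned by iterated Wick products of the $f_i(z)$, their images under the annihilation and creation modes of $\cH(n)^{O(n)}$, and the strong generators of $\cH(n)^{O(n)}$ itself.

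The key reduction is to control each module $M^{\nu_j}$ separately. For each of the finitely many modules $\cM = M^{\nu_j}$ that arise, Corollary \ref{sixth} (assuming Conjecture \ref{mainconj}) provides a finite set $S_j\subset \cM$ such that every element of $\cM$ is a linear combination of iterated Wick products $:\omega_1(z)\cdots \omega_t(z)\alpha(z):$ with $\omega_i(z)\in\cH(n)^{O(n)}$ and $\alpha(z)\in S_j$. Since each $S_j$ is finite and there are finitely many $j$, the union $\bigcup_j S_j$ is a finite set. I would then take as a candidate strong generating set for $\cH(n)^G$ the finite set consisting of the strong generators $\{j^0,j^2,\dots,j^{n^2+3n-2}\}$ of $\cH(n)^{O(n)}$ (which form a finite strong generating set by Theorem \ref{stronggen}, again using the conjecture) together with the finite set $\bigcup_j S_j$.

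It remains to verify that this finite set indeed strongly generates $\cH(n)^G$. Any element of $\cH(n)^G$ is, by the finite generation of Theorem \ref{ordfg}, a linear combination of vertex operators built from the $f_i(z)$ and the action of $\cH(n)^{O(n)}$; since each such element lies in a finite sum of the modules $M^{\nu_j}$, Corollary \ref{sixth} lets us rewrite it as a linear combination of normally ordered products $:\omega_1(z)\cdots\omega_t(z)\alpha(z):$ with $\alpha(z)\in\bigcup_j S_j$ and $\omega_i(z)\in\cH(n)^{O(n)}$. Finally, each $\omega_i(z)$ is itself a normally ordered polynomial in $\{j^0,j^2,\dots,j^{n^2+3n-2}\}$ and their derivatives by Theorem \ref{stronggen}. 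Substituting these expressions and using the reassociation identities (\ref{vaidi})--(\ref{vaidiv}) to rewrite iterated Wick products, we conclude that $\cH(n)^G$ is spanned by iterated Wick products of the elements of our finite generating set and their derivatives, which is precisely strong finite generation.

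The main obstacle I anticipate is bookkeeping rather than conceptual: one must ensure that the rewriting of $:\omega_1(z)\cdots\omega_t(z)\alpha(z):$ in terms of the finite generators does not produce terms lying outside the span being controlled, and that the reassociation via (\ref{vaidi})--(\ref{vaidiv}) terminates, producing only finitely many generator types. Here the fact that the circle products in the identities lower the degree in the good filtration on $\cH(n)$ is what guarantees termination, so the argument is essentially an induction on filtration degree. The substantive input—that each $M^{\nu}$ is finitely generated modulo the Wick ideal—has already been isolated in Lemma \ref{fifth} and Corollary \ref{sixth}, so this final theorem is a formal consequence of assembling the preceding results.
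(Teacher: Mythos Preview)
Your proposal is correct and follows essentially the same approach as the paper: invoke Theorem \ref{ordfg} to obtain finitely many $f_i(z)$ lying in irreducible $\cH(n)^{O(n)}$-modules $\cM_i$ whose union strongly generates $\cH(n)^G$, apply Corollary \ref{sixth} to each $\cM_i$ to get finite sets $S_i$, and take $\{j^0,\dots,j^{n^2+3n-2}\}\cup\bigcup_i S_i$ as the strong generating set. The only imprecision is the sentence ``each such element lies in a finite sum of the modules $M^{\nu_j}$'' --- arbitrary elements of $\cH(n)^G$ do not; rather, the point (established inside the proof of Theorem \ref{ordfg}) is that $\bigcup_i \cM_i$ already contains a strong generating set, after which your reassociation/filtration argument finishes the job exactly as the paper (more tersely) claims is ``immediate.''
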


\begin{proof} By Theorem \ref{ordfg}, we can find vertex operators $f_1(z),\dots, f_k(z)$ such that the corresponding polynomials $f_1,\dots, f_k\in gr(\cH(n))^G$, together with all $GL_{\infty}$ translates of $f_1,\dots, f_k$, generate the invariant ring $gr(\cH(n))^G$. As in the proof of Lemma \ref{ordfg}, we may assume that each $f_i(z)$ lies in an irreducible, highest-weight $\cH(n)^{O(n)}$-module $\cM_i$ of the form $L(\nu)_{\mu_0}\otimes M^{\nu}$, where $L(\nu)_{\mu_0}$ is a trivial, one-dimensional $G$-module. Furthermore, we may assume without loss of generality that $f_1(z),\dots, f_k(z)$ are highest-weight vectors for the action of $\cH(n)^{O(n)}$. Otherwise, we can replace these with the highest-weight vectors in the corresponding modules. 

For each $\cM_i$, choose a finite set $S_i \subset \cM_i$ such that $\cM_i = (\cM_i)_{S_i}$, using Corollary \ref{sixth}. Define $$ S=\{j^0(z),j^2(z),\dots, j^{n^2+3n-2}(z) \} \cup \big(\bigcup_{i=1}^k S_i \big).$$ Since $\{j^0(z),j^2(z),\dots, j^{n^2+3n-2}(z)\}$ strongly generates $\cH(n)^{O(n)}$ (assuming Conjecture \ref{mainconj}), and the set $\bigcup_{i=1}^k \cM_i$ strongly generates $\cH(n)^G$, it is immediate that $S$ is a strong, finite generating set for $\cH(n)^G$. \end{proof}

We include one more secondary result in this section, which is independent of  Conjecture \ref{mainconj}. Recall that $\cH(n)^{O(n)}$ is the quotient of $\cV_n$ by the ideal $\cI_n$, which is generated by the set $\{D_{I,J}\}$, where $I,J$ satisfy (\ref{ijineq}). We will show that $\cI_n$ is finitely generated as a vertex algebra ideal. Define $$U_n = (\cV_{n})_{(2n+2)}\cap \cI_{n},$$ which is just the vector space spanned by $\{D_{I,J}\}$, where $I,J$ satisfy (\ref{ijineq}). We have $D_{I,J} = D_{J,I}$, but there are no other linear relations among these elements. It is easy to see that $U_n$ is a module over the Lie algebra $\cP$ generated by $\{j^{2m}(k) = j^{2m}\circ_k |~k,m\geq 0\}$, since the action $\cP$ preserves both the filtration degree and the ideal $\cI_n$. Note that $\cP$ has an alternative generating set $\{\Omega_{a,b}\circ_{a+b+1-w}|~ 0\leq a\leq b,~ a+b+1-w\geq 0\}$, where $\Omega_{a,b}\circ_{a+b+1-w}$ is homogeneous of weight $w$. Recall that $gr(\cV_n)$ is the polynomial algebra generated by $\alpha^i_k$ for $i=1,\dots,n$ and $k\geq 0$. The action of $\cP$ by derivations of degree zero on $gr(\cV_{n})$ coming from the vertex Poisson algebra structure is independent of $n$, and is specified by (\ref{bcalc}). Using this formula, it is easy to see that the action of $\cP$ on $U_n$ is by \lq\lq weighted derivation" in the following sense. Fix $I = (i_0,\dots,i_n)$ and $J = (j_0,\dots,j_n)$, and let $D_{I,J}$ be the corresponding element of $U_n$. Given $p = \Omega_{a,b}\circ_{a+b+1-w}\in \cP$, we have \begin{equation}\label{paraction} p(D_{I,J}) = \sum_{r=0}^n c_r D_{I^r,J} + \sum_{r=0}^n d_r D_{I,J^r},\end{equation} for lists $I^r = (i_0,\dots, i_{r-1}, i_r + w,i_{r+1},\dots, i_n)$ and $J^r = (j_0,\dots, j_{r-1}, j_r+ w,j_{r+1},\dots, j_n)$, and constants $c_r,d_r$. If $i_r + w$ appears elsewhere on the list $I^r$, $c_r = 0$, and if $j_r + w$ appears elsewhere on the list $J^r$, $d_r = 0$. Otherwise, \begin{equation}\label{actioni} c_r = \pm \lambda_{a,b,i_r,t},\ \ \ \ \ \ d_r = \pm \lambda_{a,b,j_r,t},\end{equation} where $t= a+b+1-w$, and the signs $\pm$ are the signs of the permutations transforming $I^r$ and $J^r$ into lists in increasing order, as in (\ref{ijineq}). 

\begin{thm} $\cI_n$ is generated as a vertex algebra ideal by the set of elements $D_{I,J} \in U_n$ for which $|I| + |J| \leq 2n^2+3n$.\end{thm}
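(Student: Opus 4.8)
The plan is to reduce the statement to a combinatorial fact about the $\cP$-module $U_n$ and then induct on the weight $|I|+|J|$. Every element of $\cP$ is a linear combination of iterated circle products $j^{2m}\circ_k$ with $k\geq 0$, so the operators in $\cP$ carry any vertex algebra ideal into itself; since $\cI_n$ is generated as a vertex algebra ideal by all of $U_n$, it suffices to prove that $U_n$ is generated \emph{as a $\cP$-module} by the finite set $\{D_{I,J}\mid |I|+|J|\leq 2n^2+3n\}$. Write $\cJ_0$ for the vertex algebra ideal this set generates, so that $U(\cP)\cdot\{D_{I,J}\mid |I|+|J|\leq 2n^2+3n\}\subseteq\cJ_0$. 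Here I will use that the action of $\cP$ on $U_n$ is given \emph{exactly} by the weighted-derivation formula (\ref{paraction}): the symbol map $\phi_{2n+2}\colon U_n\to gr(\cV_n)$ is injective, because $I_n$ contains no nonzero element of degree $\le n$ in the variables $Q_{a,b}$, and it intertwines the $\cP$-action on $U_n$ with the Poisson action on symbols computed from (\ref{bcalc}).

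The key tool is Lemma \ref{third} in its general-$n$ form: for every $w\ge 1$ and every $m$, the operators $j^{2k}(2k-w+1)$ span, on the truncation $W_m$, \emph{all} weight-$w$ raising maps $\alpha^i_j\mapsto c_j\alpha^i_{j+w}$. Hence, given any finite set of index values and prescribed coefficients on them, I can choose $p\in\cP$ realizing those coefficients; in particular I can arrange $p$ to raise a \emph{single} chosen index value by $w$ and kill all the others. Feeding this into (\ref{paraction}) gives the basic move: if $v$ is a value occurring in neither $I'$ nor $J$ except as the entry to be raised, and $I'$ is obtained from $I$ by replacing one entry $i_s$ by $v=i_s-w$, then for a suitable $p$ one has $p(D_{I',J})=D_{I,J}$ on the nose, with no other terms. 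Since $|I'|+|J|<|I|+|J|$, such a clean move strictly lowers the weight.

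With this move in hand I induct on $N=|I|+|J|$, the base case being $N\leq 2n^2+3n$. Suppose $N>2n^2+3n$, let $M$ be the largest entry occurring in $I$ or $J$, and (using $D_{I,J}=D_{J,I}$) assume $M$ lies in $I$. If some value in $\{0,\dots,M-1\}$ is free, i.e. is not an entry of $I$ or $J$, then lowering $M$ to that value and applying the clean move expresses $D_{I,J}$ as $p$ applied to a strictly lower-weight element, which lies in $\cJ_0$ by induction. Otherwise $\{0,\dots,M\}\subseteq I\cup J$, which forces $M\leq 2n+1$ and $|I\cap J|=2n+1-M$; a short computation then gives $N=\tfrac{1}{2}M(M+1)+\sum_{x\in I\cap J}x\leq 2n^2+3n+1$, with equality exactly when $M=2n+1$ and $I,J$ are disjoint, i.e. when $I\sqcup J=\{0,\dots,2n+1\}$. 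Thus the induction is clean for all $N>2n^2+3n+1$, and the entire problem collapses to the \emph{saturated} configurations with $N=2n^2+3n+1$ and $I\sqcup J=\{0,\dots,2n+1\}$.

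The saturated case is the main obstacle: no value below $M$ is free, so no clean reduction exists and every available move leaves a same-weight remainder. Concretely, lowering $M\in I$ onto an occupied value $v=j_q\in J$ and applying the single-value raising operator returns, by (\ref{paraction}) and (\ref{actioni}), the target $D_{I,J}$ \emph{plus} the single extra term obtained by exchanging $M$ and $v$ between the two lists; the source $D_{I',J}$ has weight $N-(M-v)\leq 2n^2+3n$ and so lies in the base set. This produces, for each such exchange, a two-term relation $D_{\sigma}+D_{\sigma'}\in\cJ_0$ between the partitions $\sigma,\sigma'$ of $\{0,\dots,2n+1\}$ that differ by transposing one $I$-element with one $J$-element. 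I would finish by noting that the graph on partitions defined by these exchanges is connected and contains odd cycles (already for $n=1$ the three partitions of $\{0,1,2,3\}$ form a triangle), so that combining the two-term relations around such a cycle yields $2D_\sigma\in\cJ_0$, and hence $D_\sigma\in\cJ_0$ since we work over $\mathbb{C}$. Verifying that these exchange relations, with the signs coming from (\ref{actioni}), really close up into an odd cycle for every $n$ is the one step demanding care; everything else is pigeonhole together with the clean move.
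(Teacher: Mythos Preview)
Your reduction to the $\cP$-module structure of $U_n$ and the induction on $|I|+|J|$ via ``clean moves'' coming from Lemma~\ref{third} is exactly the paper's strategy, and your pigeonhole computation isolating the saturated configurations $I\sqcup J=\{0,\dots,2n+1\}$ with $|I|+|J|=2n^2+3n+1$ as the sole obstruction is correct. (In fact the paper's own proof glosses over precisely this point: it asserts that whenever $I\cup J=\{0,\dots,2n+1\}$ one has $|I|+|J|\le 2n^2+3n$, but for the disjoint case $\sum_{k=0}^{2n+1}k=2n^2+3n+1$, one more than the stated bound.)

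The gap is in your proposed resolution of the saturated case. The odd-cycle trick requires the two-term relations to have the form $D_\sigma+D_{\sigma'}\in\cJ_0$ with a consistent sign, but the permutation signs from (\ref{actioni}) do not cooperate. Already for $n=1$ the three saturated elements are $D_{\sigma_1}=D_{(0,1),(2,3)}$, $D_{\sigma_2}=D_{(0,2),(1,3)}$, $D_{\sigma_3}=D_{(0,3),(1,2)}$, and carrying out the swap moves explicitly (e.g.\ applying the operator raising $\alpha_1\mapsto\alpha_3$ to $D_{(0,1),(1,2)}$, etc.) yields
\[
D_{\sigma_1}+D_{\sigma_2},\qquad D_{\sigma_2}+D_{\sigma_3},\qquad D_{\sigma_3}-D_{\sigma_1}\ \in\ \cJ_0.
\]
The alternating sum of these around the triangle is identically zero, not $2D_\sigma$; the three relations span only a two-dimensional subspace, so they cannot force any individual $D_{\sigma_i}$ into $\cJ_0$. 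Every other choice of swap between two saturated partitions reproduces one of these three relations up to sign, and compositions of raisings in $\cP$ give nothing new inside $U_n$. So either a genuinely different class of relations is needed, or the correct bound in the statement is $|I|+|J|\le 2n^2+3n+1$, which absorbs the saturated elements into the generating set and lets the clean-move induction go through with no residual case.
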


\begin{proof} Let $\cI'_n$ denote the ideal in $\cV_n$ generated by $\{D_{I,J}|~ |I| + |J| \leq 2n^2+3n\}$. Since $U_n$ generates $\cI_n$, $\cI'_n$ is properly contained in $\cI_n$ if and only if there exists some $D_{I,J}\in \cI_n \setminus \cI'_n$. Suppose that $\cI_n \setminus \cI'_n$ is nonempty, and let $D_{I,J}$ be an element of this form of minimal weight $d$, lying in $\cI_n \setminus \cI'_n$.

Note that elements $D_{I,J}$ satisfying $|I|+|J| \leq 2n^2+3n$ have weight at most $2n^2+5n+2$. This choice guarantees that all elements $D_{I,J}$ for which the union of $I$ and $J$ is $\{0,1,\dots,n,n+1,n+2,\dots,2n+1\}$, must lie in $\cI'_n$. We say that $D_{I,J}$ has a {\it hole} at some integer $k\geq 0$ if $k$ does not appear in either $I$ or $J$. Since $d = wt(D_{I,J}) >2n^2+5n+2$, it follows that $D_{I,J}$ has a hole for some $k$ satisfying $0\leq k\leq 2n+1$, and also that there is some $l>2n+1$ such $D_{I,J}$ does {\it not} have a hole at $l$. Without loss of generality, we may assume that $l$ appears in $I$. Let $w = l-k$, and fix an integer $m$ greater than all entries of both $I$ and $J$.

By Lemma \ref{third}, we can choose an element $p\in \cP$ which is a linear combination of the operators $\Omega_{a,b} \circ_{a+b+1-w}$, for which $p(\alpha^i_k) = \alpha^i_l$ and $p(\alpha^i_r) = 0$ for all $i=1,\dots, n$ and all $r\neq k$ satisfying $0\leq r\leq m$. Let $I'$ be the list obtained from $I$ by replacing $l$ by $k$, and let $D_{I',J}$ be the corresponding element of $U_n$. Since $D_{I',J}$ has weight $d-w$, it lies in $\cI'_n$ by inductive assumption, and we clearly have $p(D_{I',J}) = D_{I,J}$. This shows that $D_{I,J}$ lies in $\cI'_n$ as well. \end{proof}

\section{Appendix}

In this Appendix, we present the results of computer calculations that prove Conjecture \ref{mainconj} in the cases $n=2$ and $n=3$. These calculations were done using Kris Thielemann's OPE package for Mathematica \cite{T}. For $n=2$, define
$$D^6_0 =:\Omega_{0,0} \Omega_{1,1} \Omega_{2,2}:  - :\Omega_{0,2} \Omega_{0,2} \Omega_{1,1}: + 2 :\Omega_{0,1} \Omega_{0,2} \Omega_{1,2}: - :\Omega_{0,0} \Omega_{1,2}\Omega_{1,2}: - :\Omega_{0,1} \Omega_{0,1} \Omega_{2,2}:,$$ 

$$D^4_0 = - 1/30 :\Omega_{0,0} \Omega_{1,7}: + 3/4 :\Omega_{0,0} \Omega_{2,6}: - 5/6: \Omega_{0,1} \Omega_{1,6}: -  13/15 :\Omega_{0,1} \Omega_{2,5}: $$ $$+ 61/30 : \Omega_{0,2} \Omega_{1,5}:  -  1/6 : \Omega_{0,2} \Omega_{2,4}:  + 5/12 :\Omega_{1,1} \Omega_{0,6}: - 1/4 :\Omega_{1,1} \Omega_{2,4}: $$ $$ -  1/10  :\Omega_{1,2} \Omega_{0,5}: + :\Omega_{1,2} \Omega_{1,4}:  + :\Omega_{1,2} \Omega_{2,3}: + 1/12 :\Omega_{0,4} \Omega_{2,2}: - 7/6 :\Omega_{1,3} \Omega_{2,2}: ,$$ 

$$ D^2_0 =   \frac{149}{600} \Omega_{0,10} +\partial^2 \bigg(- 
 \frac{1697}{10800} \Omega_{0,8} + \frac{1637}{18900} \Omega_{1,7} - \frac{33241}{37800} \Omega_{2,6} + 
 \frac{16117}{9450} \Omega_{3,5}- \frac{4223}{3780} \Omega_{4,4}\bigg) .$$
 
A computer calculation shows that the corresponding normally ordered polynomial in the variables $\omega_{a,b}$ is identically zero, so $D^6_0 + D^4_0 + D^2_0$ lies in $Ker(\pi_2)$, and hence must coincide with $D_0$. It follows that $R_0 =  \frac{149}{600} J^{10}$, and in particular is nonzero. 

Similarly, in the case $n=3$, define
$$D^8_0 = :\Omega_{0,3} \Omega_{0,3} \Omega_{1,2} \Omega_{1,2}: - 2 :\Omega_{0,2}\Omega_{0,3} \Omega_{1,2} \Omega_{1,3} : + 
 :\Omega_{0,2}\Omega_{0,2} \Omega_{1,3} \Omega_{1,3}: $$ $$- :\Omega_{0,3} \Omega_{0,3} \Omega_{1,1} \Omega_{2,2}:  + 
 2 :\Omega_{0,1} \Omega_{0,3} \Omega_{1,3} \Omega_{2,2}:  - :\Omega_{0,0}\Omega_{1,3} \Omega_{1,3} \Omega_{2,2}: $$ $$+ 
 2 :\Omega_{0,2}\Omega_{0,3}\Omega_{1,1}\Omega_{2,3}: - 2 :\Omega_{0,1}\Omega_{0,3}\Omega_{1,2}\Omega_{2,3}: - 
 2 :\Omega_{0,1}\Omega_{0,2}\Omega_{1,3}\Omega_{2,3}: $$ $$+ 2 :\Omega_{0,0}\Omega_{1,2}\Omega_{1,3}\Omega_{2,3}: + 
 :\Omega_{0,1}\Omega_{0,1}\Omega_{2,3}\Omega_{2,3}: - :\Omega_{0,0}\Omega_{1,1}\Omega_{2,3}\Omega_{2,3}: $$ $$- 
 :\Omega_{0,2}\Omega_{0,2}\Omega_{1,1}\Omega_{3,3}: + 2 :\Omega_{0,1}\Omega_{0,2}\Omega_{1,2}\Omega_{3,3}: - 
 :\Omega_{0,0}\Omega_{1,2}\Omega_{1,2}\Omega_{3,3}:$$ $$ - :\Omega_{0,1}\Omega_{0,1}\Omega_{2,2}\Omega_{3,3}: + 
 :\Omega_{0,0}\Omega_{1,1}\Omega_{2,2}\Omega_{3,3}:,$$

$$D^6_0 = 1/56 :\Omega_{2,10}\Omega_{1,1}\Omega_{0,0}: - 23/42 :\Omega_{3,9}\Omega_{1,1}\Omega_{0,0}: - 
 1/56 :\Omega_{2,10}\Omega_{0,1}\Omega_{0,1}: + 23/42 :\Omega_{3,9}\Omega_{0,1}\Omega_{0,1}: $$ $$+ 
 7/12 :\Omega_{2,9}\Omega_{1,2}\Omega_{0,0}: + 62/105 :\Omega_{3,8}\Omega_{1,2}\Omega_{0,0}:- 
 7/12 :\Omega_{2,9}\Omega_{0,2}\Omega_{0,1}:- 62/105 :\Omega_{3,8}\Omega_{0,2}\Omega_{0,1}: $$ $$- 
 139/105 :\Omega_{2,8}\Omega_{1,3}\Omega_{0,0}:+ 1/15 :\Omega_{37}\Omega_{13}\Omega_{00}: - 
 7/24 :\Omega_{1,9}\Omega_{2,2}\Omega_{0,0}: + 1/4 :\Omega_{3,7}\Omega_{2,2}\Omega_{0,0}: $$ $$+ 
 139/105 :\Omega_{2,8}\Omega_{0,3}\Omega_{0,1}: - 1/15 :\Omega_{3,7}\Omega_{0,3}\Omega_{0,1}: + 
 3/20 :\Omega_{2,8}\Omega_{1,2}\Omega_{0,1}: - 81/70 :\Omega_{3,7}\Omega_{1,2}\Omega_{0,1}: $$ $$+ 
 7/24 :\Omega_{1,9}\Omega_{0,2}\Omega_{0,2}: - 1/4 :\Omega_{3,7}\Omega_{0,2}\Omega_{0,2}: - 
 3/20 :\Omega_{2,8}\Omega_{0,2}\Omega_{1,1}: + 81/70 :\Omega_{3,7}\Omega_{0,2}\Omega_{1,1}: $$ $$+ 
 1/21 :\Omega_{1,8}\Omega_{2,3}\Omega_{0,0}: - 2/3 :\Omega_{2,7}\Omega_{2,3}\Omega_{0,0}: - 
 7/10 :\Omega_{3,6}\Omega_{2,3}\Omega_{0,0}: + 9/35 :\Omega_{2,7}\Omega_{1,3}\Omega_{0,1}: $$ $$+ 
 5/6 :\Omega_{3,6}\Omega_{1,3}\Omega_{0,1}: - 3/20 :\Omega_{1,8}\Omega_{2,2}\Omega_{0,1}: - 
 1/5 :\Omega_{3,6}\Omega_{2,2}\Omega_{0,1}: - 1/21 :\Omega_{1,8}\Omega_{0,3}\Omega_{0,2}: $$ $$+ 
 2/3 :\Omega_{2,7}\Omega_{0,3}\Omega_{0,2}: + 7/10 :\Omega_{3,6}\Omega_{0,3}\Omega_{0,2}:+ 
 3/20 :\Omega_{1,8}\Omega_{1,2}\Omega_{0,2}: + 1/5 :\Omega_{3,6}\Omega_{1,2}\Omega_{0,2}: $$ $$- 
 9/35 :\Omega_{2,7}\Omega_{0,3}\Omega_{1,1}: - 5/6 :\Omega_{3,6}\Omega_{0,3}\Omega_{1,1}: - 
 1/30 :\Omega_{1,7}\Omega_{3,3}\Omega_{0,0}: + 3/4 :\Omega_{2,6}\Omega_{3,3}\Omega_{0,0}: $$ $$+ 
 9/10 :\Omega_{1,7}\Omega_{2,3}\Omega_{0,1}: + 1/5 :\Omega_{2,6}\Omega_{2,3}\Omega_{0,1}: + 
 13/15 :\Omega_{3,5}\Omega_{2,3}\Omega_{0,1}:- 81/70 :\Omega_{1,7}\Omega_{1,3}\Omega_{0,2}: $$ $$+ 
 2/5 :\Omega_{2,6}\Omega_{1,3}\Omega_{0,2}: - 61/30 :\Omega_{3,5}\Omega_{1,3}\Omega_{0,2}:+ 
 3/40 :\Omega_{0,8}\Omega_{2,2}\Omega_{1,1}: - 1/2 :\Omega_{3,5}\Omega_{2,2}\Omega_{1,1}: $$ $$+ 
 1/30 :\Omega_{1,7}\Omega_{0,3}\Omega_{0,3}: - 3/4 :\Omega_{2,6}\Omega_{0,3}\Omega_{0,3}: + 
 9/35 :\Omega_{1,7}\Omega_{1,2}\Omega_{0,3}: - 3/5 :\Omega_{2,6}\Omega_{1,2}\Omega_{0,3}: $$ $$+ 
 7/6 :\Omega_{3,5}\Omega_{1,2}\Omega_{0,3}: - 3/40 :\Omega_{0,8}\Omega_{1,2}\Omega_{1,2}:+ 
 1/2 :\Omega_{3,5}\Omega_{1,2}\Omega_{1,2}: - 5/6 :\Omega_{1,6}\Omega_{3,3}\Omega_{0,1}: $$ $$- 
 13/15 :\Omega_{2,5}\Omega_{3,3}\Omega_{0,1}: - 3/5 :\Omega_{1,6}\Omega_{2,3}\Omega_{0,2}: + 
 1/6 :\Omega_{3,4}\Omega_{2,3}\Omega_{0,2}: - 24/35 :\Omega_{0,7}\Omega_{2,3}\Omega_{1,1}: $$ $$+ 
 4/5 :\Omega_{2,5}\Omega_{2,3}\Omega_{1,1}: - 1/2 :\Omega_{3,4}\Omega_{2,3}\Omega_{1,1}: + 
 5/6 :\Omega_{1,6}\Omega_{1,3}\Omega_{0,3}: + 13/15 :\Omega_{2,5}\Omega_{1,3}\Omega_{0,3}: $$ $$+ 
 3/5 :\Omega_{1,6}\Omega_{2,2}\Omega_{0,3}: - 1/6 :\Omega_{3,4}\Omega_{2,2}\Omega_{0,3}: + 
 24/35 :\Omega_{0,7}\Omega_{1,3}\Omega_{1,2}: - 4/5 :\Omega_{2,5}\Omega_{1,3}\Omega_{1,2}: $$ $$+ 
 1/2 :\Omega_{3,4}\Omega_{1,3}\Omega_{1,2}: + 61/30 :\Omega_{1,5}\Omega_{3,3}\Omega_{0,2}: - 
 1/6 :\Omega_{2,4}\Omega_{3,3}\Omega_{0,2}: + 5/12 :\Omega_{3,3}\Omega_{0,6}\Omega_{1,1}: $$ $$- 
 1/4 :\Omega_{2,4}\Omega_{3,3}\Omega_{1,1}: - 61/30 :\Omega_{1,5}\Omega_{2,3}\Omega_{0,3}: + 
 1/6 :\Omega_{2,4}\Omega_{2,3}\Omega_{0,3}:- 1/10 :\Omega_{3,3}\Omega_{0,5}\Omega_{1,2}: $$ $$+ :\Omega_{3,3}\Omega_{1,4}\Omega_{1,2}: + 1/15 :\Omega_{0,6}\Omega_{2,3}\Omega_{1,2}: - 4/5 :\Omega_{1,5}\Omega_{2,3}\Omega_{1,2}:+ 1/12 :\Omega_{3,3}\Omega_{2,2}\Omega_{0,4}: $$ $$- 5/12 :\Omega_{0,6}\Omega_{1,3}\Omega_{1,3}:+ 1/4 :\Omega_{2,4}\Omega_{1,3}\Omega_{1,3}: - 1/15 :\Omega_{0,6}\Omega_{2,2}\Omega_{1,3}: + 4/5 :\Omega_{1,5}\Omega_{2,2}\Omega_{1,3}: $$ $$- 1/6 :\Omega_{3,3}\Omega_{2,2}\Omega_{1,3}: - 
 1/12 :\Omega_{2,3}\Omega_{2,3}\Omega_{0,4}: + 1/10 :\Omega_{2,3}\Omega_{0,5}\Omega_{1,3}:- 
 :\Omega_{1,4}\Omega_{2,3}\Omega_{1,3}: $$ $$+ 1/6 :\Omega_{2,3}\Omega_{2,3}\Omega_{1,3}:,$$

$$D^4_0 = \frac{451}{114660} :\Omega_{0,0}\Omega_{1,15}: + \frac{6251}{102960} :\Omega_{0,0}\Omega_{2,14}: + 
 \frac{10261}{69300} :\Omega_{0,0}\Omega_{3,13}: - \frac{1}{140} :\Omega_{0,0}\Omega_{6,10}: $$ $$+ 
 \frac{137}{840} :\Omega_{0,0}\Omega_{7,9}: + \frac{4849}{22050} :\Omega_{0,0}\Omega_{8,8}: - 
 \frac{451}{114660} :\Omega_{0,1}\Omega_{0,15}: + \frac{96}{2695} :\Omega_{0,1}\Omega_{1,14}: $$ $$- 
 \frac{1857}{28600} :\Omega_{0,1}\Omega_{2,13}: - \frac{467}{2475} :\Omega_{0,1}\Omega_{3,12}:- 
 \frac{1}{560} :\Omega_{0,1}\Omega_{5,10}: - \frac{347}{1680} :\Omega_{0,1}\Omega_{6,9}: $$ $$- 
 \frac{643}{9800} :\Omega_{0,1}\Omega_{7,8}: + \frac{719}{15015} :\Omega_{0,2}\Omega_{0,14}: + 
 \frac{24929}{1201200} :\Omega_{0,2}\Omega_{1,13}: + \frac{155}{1584} :\Omega_{0,2}\Omega_{2,12}:$$ $$ - \frac{1427}{9900} :\Omega_{0,2}\Omega_{3,11}:+ \frac{203}{720} :\Omega_{0,2}\Omega_{5,9}: + 
 \frac{209}{8400} :\Omega_{0,2}\Omega_{6,8}: - \frac{1}{10} :\Omega_{0,2}\Omega_{7,7}: $$ $$+ \frac{59}{1470} :\Omega_{1,1}\Omega_{0,14}: + \frac{811}{6300} :\Omega_{1,1}\Omega_{2,12}:+ \frac{191}{1260} :\Omega_{1,1}\Omega_{3,11}:+  \frac{1}{112} :\Omega_{1,1}\Omega_{4,10}: $$ $$- \frac{23}{105} :\Omega_{1,1}\Omega_{5,9}: + \frac{5}{48} :\Omega_{1,1}\Omega_{6,8}: +  \frac{1809}{9800} :\Omega_{1,1}\Omega_{7,7}: - \frac{4271}{69300} :\Omega_{0,3}\Omega_{0,13}: + $$ $$
 \frac{118}{4725} :\Omega_{0,3}\Omega_{1,12}: + \frac{22669}{118800} :\Omega_{0,3}\Omega_{2,11}: + 
  \frac{1279}{10800} :\Omega_{0,3}\Omega_{3,10}: +  \frac{257}{6300} :\Omega_{0,3}\Omega_{5,8}: $$ $$+ 
  \frac{1537}{4200} :\Omega_{0,3}\Omega_{6,7}: +  \frac{2467}{46200} :\Omega_{1,2}\Omega_{0,13}: + 
  \frac{2431}{25200} :\Omega_{1,2}\Omega_{1,12}:+  \frac{591}{2200} :\Omega_{1,2}\Omega_{2,11}: $$ $$ + 
  \frac{3523}{6300} :\Omega_{1,2}\Omega_{3,10}: +  \frac{5}{8} :\Omega_{1,2}\Omega_{4,9}: + 
  \frac{2383}{8400} :\Omega_{1,2}\Omega_{5,8}: +  \frac{3}{700} :\Omega_{1,2}\Omega_{6,7}:  $$ $$+  \frac{7}{96} :\Omega_{0,4}\Omega_{2,10}: - 
  \frac{367}{504} :\Omega_{0,4}\Omega_{3,9}: +  \frac{1019}{18900} :\Omega_{1,3}\Omega_{0,12}: - 
  \frac{11}{1260} :\Omega_{1,3}\Omega_{1,11}: $$ $$ -  \frac{4399}{3600} :\Omega_{1,3}\Omega_{2,10}: - \frac{5851}{7560} :\Omega_{1,3}\Omega_{3,9}: -  \frac{139}{210} :\Omega_{1,3}\Omega_{4,8}: - 
  \frac{2447}{4200} :\Omega_{1,3}\Omega_{5,7}: $$ $$+  \frac{1}{6} :\Omega_{1,3}\Omega_{6,6}: -  \frac{155}{2376} :\Omega_{2,2}\Omega_{0,12}: - 
  \frac{283}{1100} :\Omega_{2,2}\Omega_{1,11}: +  \frac{271}{1080} :\Omega_{2,2}\Omega_{3,9}: $$ $$+  \frac{1}{48} :\Omega_{2,2}\Omega_{4,8}: + 
  \frac{1}{10} :\Omega_{2,2}\Omega_{5,7}:+  \frac{7}{150} :\Omega_{2,2}\Omega_{6,6}: -  \frac{17}{560} :\Omega_{0,5}\Omega_{1,10}: $$ $$+ 
  \frac{83}{240} :\Omega_{0,5}\Omega_{2,9}: +  \frac{799}{2100} :\Omega_{0,5}\Omega_{3,8}: +  \frac{17}{56} :\Omega_{1,4}\Omega_{1,10}: + 
  \frac{17}{16} :\Omega_{1,4}\Omega_{2,9}: $$ $$+  \frac{27}{35} :\Omega_{1,4}\Omega_{3,8}:-  \frac{169}{5400} :\Omega_{2,3}\Omega_{0,11}: + 
  \frac{1021}{3150} :\Omega_{2,3}\Omega_{1,10}: -  \frac{163}{540} :\Omega_{2,3}\Omega_{2,9}: $$ $$- 
  \frac{2539}{2520} :\Omega_{2,3}\Omega_{3,8}:-  \frac{1}{4} :\Omega_{2,3}\Omega_{4,7}: -  \frac{59}{100} :\Omega_{2,3}\Omega_{5,6}: + 
  \frac{4691}{10080} :\Omega_{0,6}\Omega_{1,9}: $$ $$-  \frac{39899}{25200} :\Omega_{0,6}\Omega_{2,8}: - 
  \frac{1889}{5040} :\Omega_{0,6}\Omega_{3,7}: +  \frac{1037}{1680} :\Omega_{1,5}\Omega_{0,10}: - 
  \frac{22}{105} :\Omega_{1,5}\Omega_{1,9}:$$ $$-  \frac{1}{15} :\Omega_{1,5}\Omega_{2,8}: -  \frac{1369}{4200} :\Omega_{1,5}\Omega_{3,7}: - 
  \frac{17}{336} :\Omega_{2,4}\Omega_{0,10}: -  \frac{37}{96} :\Omega_{2,4}\Omega_{1,9}: $$ $$-  \frac{13}{240} :\Omega_{2,4}\Omega_{2,8}: + 
  \frac{419}{1680} :\Omega_{2,4}\Omega_{3,7}: +  \frac{7}{400} :\Omega_{3,3}\Omega_{0,10}: + 
  \frac{137}{1260} :\Omega_{3,3}\Omega_{1,9}: $$ $$+  \frac{3397}{5040} :\Omega_{3,3}\Omega_{2,8}: +  \frac{7}{90} :\Omega_{3,3}\Omega_{3,7}: + 
  \frac{19}{72} :\Omega_{3,3}\Omega_{4,6}: +  \frac{937}{1800} :\Omega_{3,3}\Omega_{5,5}:$$ $$+ 
  \frac{9697}{29400} :\Omega_{0,7}\Omega_{1,8}: +  \frac{37}{60} :\Omega_{0,7}\Omega_{2,7}: + 
  \frac{301}{450} :\Omega_{0,7}\Omega_{3,6}: -  \frac{967}{5040} :\Omega_{1,6}\Omega_{0,9}:  $$ $$-  \frac{47}{240} :\Omega_{1,6}\Omega_{1,8}: + 
  \frac{123}{700} :\Omega_{1,6}\Omega_{2,7}: +  \frac{4}{5} :\Omega_{1,6}\Omega_{3,6}: -  \frac{13}{360} :\Omega_{2,5}\Omega_{0,9}:  $$ $$- 
  \frac{4733}{8400} :\Omega_{2,5}\Omega_{1,8}: -  \frac{4}{5} :\Omega_{2,5}\Omega_{2,7}: +  \frac{17}{225} :\Omega_{2,5}\Omega_{3,6}: + 
  \frac{11}{252} :\Omega_{3,4}\Omega_{0,9}:$$ $$+  \frac{27}{140} :\Omega_{3,4}\Omega_{1,8}: +  \frac{163}{420} :\Omega_{3,4}\Omega_{2,7}: + 
  \frac{59}{180} :\Omega_{3,4}\Omega_{3,6}: -  \frac{1199}{2352} :\Omega_{0,8}\Omega_{1,7}: $$ $$+ 
  \frac{11969}{16800} :\Omega_{0,8}\Omega_{2,6}: -  \frac{4399}{6300} :\Omega_{0,8}\Omega_{3,5}: - 
  \frac{45}{196} :\Omega_{1,7}\Omega_{1,7}: -  \frac{97}{175} :\Omega_{1,7}\Omega_{2,6}: $$ $$-  \frac{857}{4200} :\Omega_{1,7}\Omega_{3,5}: + 
  \frac{1}{6} :\Omega_{2,6}\Omega_{2,6}: -  \frac{103}{360} :\Omega_{2,6}\Omega_{3,5}: -  \frac{127}{300} :\Omega_{3,5}\Omega_{3,5}:,$$

$$D^2_0 = -\frac{2419}{705600} \Omega_{0,18} +  \frac{2356854113}{13722508800} \partial^2 \Omega_{0,16} - 
 \frac{3876250811}{34306272000}  \partial^2 \Omega_{1,15} + \frac{710040893}{4678128000} \partial^2 \Omega_{2,14} $$ $$ - 
 \frac{3598850419}{44108064000} \partial^2 \Omega_{3,13} + \frac{50867963}{212058000} \partial^2 \Omega_{4,12} + 
 \frac{5617559}{75398400} \partial^2 \Omega_{5,11} - \frac{47629609}{62832000}\partial^2 \Omega_{6,10} $$ $$+ 
 \frac{12154709537}{7916832000} \partial^2 \Omega_{7,9} - \frac{13317559687}{15833664000} \partial^2 \Omega_{8,8}.$$
 
A computer calculation shows that $D^8_0 + D^6_0 + D^4_0 + D^2_0$ lies in $Ker(\pi_3)$, so it must coincide with $D_0$. In particular, $R_0 = -\frac{2419}{705600} J^{18}$.

\end{document}